\newtheorem{theorem}{Theorem}[section]
\newtheorem{lemma}[theorem]{Lemma}
\newtheorem{corollary}[theorem]{Corollary}
\newtheorem{proposition}[theorem]{Proposition}
\newtheorem*{remark*}{\it Remark}
\newcommand{\fl}[2]{
\xymatrix@C15pt{#1\ar[r]&#2}}
\newcommand{\flba}[2]{
\xymatrix@C15pt{#1\ar@{|->}[r]&#2}}
\newcommand{\flcourte}[2]{
\xymatrix@C12pt{#1\ar[r]&#2}}
\theoremstyle{definition}}
\theoremstyle{definition}\newtheorem{example}[theorem]{Example}}
\theoremstyle{definition}
\newtheorem{definition}[theorem]{Definition}
\newtheorem{question}[theorem]{Question}
\newtheorem{fact}[theorem]{Fact}
\newtheorem{claim}[theorem]{Claim}
\theoremstyle{definition}\newtheorem{remark}[theorem]{Remark}}
\def\T{\ensuremath{\mathbb T}}
\def\R{\ensuremath{\mathbb R}}
\def\Z{\ensuremath{\mathbb Z}}
\def\C{\ensuremath{\mathbb C}}
\def\Q{\ensuremath{\mathbb Q}}
\def\N{\ensuremath{\mathbb N}}
\def\P{\ensuremath{\mathbb P}}
\newcommand{\pss}[2]{\ensuremath{{\langle #1,#2\rangle}}}
\newcommand{\wh}[1]{\widehat{#1}}
\newcommand{\ds}{\displaystyle}
\newcommand{\ba}[1]{\overline{#1}}
\newcommand{\ka}{Kazhdan}
\newcommand{\rnk}{\mathcal R^{(n_k)}}
\newcommand{\Conv}{\mathop{\scalebox{1.5}{\raisebox{-0.2ex}{$\ast$}}}}%
\newcommand{\sbt}{\,\begin{picture}(-1,0)(-1,-2)\circle*{3}\end{picture}\ }
\newcommand{\nk}{n_{k}}
\newcommand{\nkp}[1]{(n_{k})_{k\ge #1}}
\newcommand{\mk}{m_{k}}
\newcommand{\much}{\widehat{\mu}}
\newcommand{\mpt}{\mathcal{P}(\T)}
\newcommand{\ep}{\varepsilon}
\DeclareMathOperator{\dist}{dist}
\numberwithin{equation}{section}
\author[C. Badea]{Catalin Badea}
\address[C. Badea]{Univ. Lille, CNRS, UMR 8524 - Laboratoire Paul Painlev\'{e}, F-59000 Lille, France}
\email{catalin.badea@univ-lille.fr}
\urladdr{http://math.univ-lille1.fr/~badea/}
\author[S. Grivaux]{Sophie Grivaux}
\address[S. Grivaux]{CNRS, Univ. Lille, UMR 8524 - Laboratoire Paul Painlev\'{e}, F-59000 Lille, France}
\email{sophie.grivaux@univ-lille.fr}
\urladdr{http://math.univ-lille1.fr/~grivaux/}
\author[\'{E}. Matheron]{\'{E}tienne Matheron}
\address[\'{E}. Matheron]{Laboratoire de Math\'{e}matiques de Lens, Universit\'{e} d'Artois, Rue Jean Souvraz SP 18, F-62307 Lens, France}
\email{etienne.matheron@univ-artois.fr}
\urladdr{http://matheron.perso.math.cnrs.fr/}
\date{\today}
\begin{document}

\title[Rigidity sequences, Kazhdan sets and group topologies]{Rigidity sequences, Kazhdan sets and\\
group topologies on the integers}

 \keywords{Fourier coefficients of continuous measures; rigidity sequences; Kazhdan subsets of $\mathbb{Z}$ and Kazhdan sequences; group topologies; nullpotent sequences; additive basis of 
 $\mathbb{Z}$; random subsets of $\mathbb{Z}$; the probabilistic method}
  \subjclass{11J71, 22D40, 28D05, 37A25, 42A16, 42A61, 43A46, 47A10, 47A35}
 \thanks{This work was supported in part by
 the project FRONT of the French
National Research Agency (grant ANR-17-CE40-0021) and by the Labex CEMPI (ANR-11-LABX-0007-01)}

\begin{abstract}
We study the relationships between three different classes of sequences (or sets) of integers, 
namely rigidity sequences, Kazhdan sequences (or sets) and nullpotent sequences. We prove that rigidity sequences are non-Kazhdan and nullpotent, and that all other implications are false. In particular, we show by probabilistic means that there exist sequences of integers which are both nullpotent and Kazhdan. Moreover, using Baire category methods, we provide general criteria for a sequence of integers to be a rigidity sequence. Finally, we give a new proof of the existence of rigidity sequences which are dense in $\Z$ for the Bohr topology, a result originally due to Griesmer.
\end{abstract}

\maketitle
\section{Introduction}\label{Introduction} This paper is centered around three classes of  sequences (or subsets) of $\Z$, the additive group of the integers. These are rigidity sequences, Kazhdan sequences (or sets) and nullpotent sequences, \mbox{\it i.e.} sequences which converge to zero with respect to some Hausdorff group topology on $\Z$. The motivation for considering these sequences is that they appear naturally in the study of problems relevant to harmonic analysis, geometric group theory, dynamical systems, and number theory. 

\subsection{Kazhdan sets}  \ka\ subsets of $\Z$ are defined as follows.

\begin{definition}\label{Def 1}
A subset $Q$ of $\Z$ is called a \emph{\ka\ set} if there exists $\varepsilon >0$ such that any unitary operator $U$ on a complex separable Hilbert space $H$ satisfies the following property: if there exists a vector $x\in H$ with $||x||=1$ such that 
$\sup_{n\in Q}||U^{n}x-x||<\varepsilon $, then $1$ is an eigenvalue of $U$, i.e.\ there exists a non-zero vector $y\in H$ such that $Uy=y$.
\end{definition}

The set  
$\Z$ itself is a \ka\ set in $\Z$ 
(this can be proved for instance by using von Neumann's mean ergodic theorem). 
On the other hand, by considering rotations on $H=\C$, it is easy to see that no finite subset of $\Z$ can be Kazhdan. This also follows from the fact that $\Z$ is a non-compact amenable group, and so it does not possess Kazhdan's property (T). Indeed, the
notion of \ka\ set actually makes sense in any topological group $G$ -- just replace unitary operators $U$ by (strongly continuous) unitary representations $\pi$ of $G$ -- and one of the equivalent definitions of Kazhdan's Property (T) is that a topological group $G$ has Property (T)  if and only if it admits a \emph{compact} \ka\ set. See the book \cite{BdHV} for more on Property (T) and its many applications to various fields. 

Even though Property (T) involves compact \ka\ sets, it was suggested in 
\cite[p. 284]{BdHV} that it is also of interest to study \ka\ sets in groups which do \emph{not} have Property (T). This topic is addressed in \cite{BG1}, where a characterization of generating \ka\ sets in second-countable locally compact groups is obtained. (A subset of a group $G$ is said to be \emph{generating} if it generates $G$ in the group-theoretic sense; in the case $G=\Z$, a subset of $\Z$ is generating if it is not contained in $p\Z$ for any $p\ge 2$.) This leads to an equidistribution criterion implying that a set is \ka, as well as to explicit characterizations of \ka\ sets in many groups without Property (T), such as locally compact abelian groups or Heisenberg groups. See also \cite{Ch} for examples of \ka\ sets in other Lie groups, and \cite{BaGrLyons} for dynamical applications.
\par\smallskip
In the present paper we will be interested in \ka\ subsets of $\Z$ only. One pleasant thing when working with the group $\Z$ is that the property of being or not a \ka\ subset of $\Z$ can be expressed  in terms of Fourier coefficients of probability measures on the unit circle $\T=\{z \in\C\,;\,|z |=1\}$. We denote by $\mathcal{P}(\T)$ the set of all Borel probability measures on $\T$, and we endow it with the Prokhorov topology (\textit{i.e.}\ the topology of weak convergence of measures), which turns it into a compact metrizable space. A measure $\mu \in\mathcal{P}(\T)$ is said to be \emph{continuous}, or \emph{atomless}, if $\mu (\{z \})=0$ for any $z\in\T$. The set of all continuous measures $\mu\in\mathcal P(\T)$ will be denoted by $\mathcal P_c(\T)$.
Particularizing some results of \cite{BG1}, we have the following characterizations (see also \cite{BG2} for direct proofs using tools from harmonic analysis).
\begin{theorem}
\label{Th 1}
 A subset $Q$ of $\Z$ is a \ka\ set if and only if there exists $\varepsilon >0$ such that the following property holds true:
\begin{enumerate}
\item[\emph{$(1)_\varepsilon$}] if $\mu \in\mpt$ is such that $\sup _{n\in Q}|\much(n)-1|<\varepsilon $, then $\mu (\{1\})>0$.
\end{enumerate}
Moreover, if $Q$ is a generating subset of $\Z$, it is equivalent to say that the following holds true for some $\varepsilon>0$:
\begin{enumerate}
\item[\emph{$(1')_\varepsilon$}] if $\mu \in\mpt$ is such that $\sup _{n\in Q}|\much(n)-1|<\varepsilon $, then $\mu $ has an atom.
\end{enumerate}

\smallskip\noindent
In other words, $Q$ is \emph{not} a \ka\ set if and only if
\begin{enumerate}
\item[\emph{(2)}] for every $\varepsilon >0$, there exists $\mu \in\mpt$ with $\mu (\{1\})=0$ such that $\sup_ {n\in Q}|\much(n)-1|<\varepsilon $;
\end{enumerate}
and if $Q$ is generating this is equivalent to
\begin{enumerate}
\item[\emph{(2')}] for every $\varepsilon >0$, there exists  $\mu \in {\mathcal{P}_c(\T)}$ such that $\sup _{n\in Q}|\much(n)-1|<\varepsilon $.
\end{enumerate}
\end{theorem}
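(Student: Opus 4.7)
My plan is to use the spectral theorem for unitary operators as a dictionary between the definition of a Kazhdan set and the Fourier-analytic conditions $(1)_\varepsilon$ and $(1')_\varepsilon$. If $U$ is a unitary operator on a separable Hilbert space $H$ and $x\in H$ is a unit vector, then there is a unique $\mu_x\in\mathcal{P}(\T)$ such that $\langle U^n x,x\rangle=\hat{\mu_x}(n)$ for every $n\in\Z$, and one reads off the identity $\|U^n x-x\|^2 = 2-2\,\mathrm{Re}\,\hat{\mu_x}(n)$ together with the Cauchy--Schwarz bound $|1-\hat{\mu_x}(n)|\le\|U^n x-x\|$ and the formula $\mu_x(\{1\})=\|Px\|^2$, where $P$ is the orthogonal projection onto $\ker(U-I)$. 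Conversely every $\mu\in\mathcal{P}(\T)$ arises in this way, as the spectral measure of the cyclic vector $\mathbf{1}\in L^2(\T,\mu)$ under $U=M_z$; in this model $\ker(M_z-I)\neq 0$ is equivalent to $\mu(\{1\})>0$.

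With this dictionary the first equivalence is straightforward. If $(1)_\varepsilon$ holds and $\sup_{n\in Q}\|U^n x-x\|<\varepsilon$, the Cauchy--Schwarz bound yields $\sup_{n\in Q}|1-\hat{\mu_x}(n)|<\varepsilon$, so $\mu_x(\{1\})>0$ and $U$ has $1$ as an eigenvalue, proving $Q$ is Kazhdan with constant $\varepsilon$. Conversely, if $Q$ is Kazhdan with constant $\varepsilon_0$ and $\mu\in\mathcal{P}(\T)$ satisfies $\sup_{n\in Q}|1-\hat\mu(n)|<\varepsilon_0^2/2$, the cyclic model gives $\|U^n\mathbf{1}-\mathbf{1}\|^2\le 2|1-\hat\mu(n)|<\varepsilon_0^2$, hence $\mu(\{1\})>0$; this is $(1)_{\varepsilon_0^2/2}$. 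The ``in other words'' statements $(2)$ and $(2')$ are merely the logical negations of ``$(1)_\varepsilon$ (resp.\ $(1')_\varepsilon$) holds for some $\varepsilon>0$''.

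For the second equivalence under the generating hypothesis, $(1)_\varepsilon\Rightarrow(1')_\varepsilon$ is trivial since an atom at $1$ is an atom. For the converse I would use the convolution identity $\widehat{\mu\ast\tilde\mu}(n)=|\hat\mu(n)|^2$ together with $(\mu\ast\tilde\mu)(\{1\})=\sum_z\mu(\{z\})^2$: assuming $(1')_\varepsilon$, applying it to $\mu\ast\tilde\mu$ already forces any $\mu$ with $\sup_{n\in Q}|1-\hat\mu(n)|<\varepsilon/2$ to have at least one atom, at some $z_0$ of weight $c>0$. The generating hypothesis then has to be invoked to force $z_0=1$: a short computation starting from $|\hat\mu(n)-cz_0^n|\le 1-c$ yields the quantitative bound $\sup_{n\in Q}|1-z_0^n|=O(\sqrt{\delta/c})$, and one then peels atoms off $\mu$ one at a time, renormalizing and tracking the (slightly worsened) supremum at each step, until either one reaches a continuous witness that contradicts $(1')_\varepsilon$ or one is forced to conclude $z_0=1$. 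The main obstacle is precisely this last step: generating $Q$ only guarantees $\bigcap_{\delta>0}\{z\in\T:\sup_{n\in Q}|1-z^n|\le\delta\}=\{1\}$, with no effective version at finite $\delta$, so the atom-removal must be made quantitative, and controlling how fast the supremum norm degrades as atoms are peeled off will be the delicate technical point.
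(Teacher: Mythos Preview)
The paper does not actually prove Theorem~\ref{Th 1}: it is stated as a specialization of results from \cite{BG1}, with \cite{BG2} cited for ``direct proofs using tools from harmonic analysis''. The only argument the paper supplies is the remark immediately following the statement, namely that necessity of $(1)_\varepsilon$ is clear by taking $U=M_z$ on $L^2(\mu)$. Your treatment of the first equivalence (Kazhdan $\Leftrightarrow$ $(1)_\varepsilon$ for some $\varepsilon$) via the spectral dictionary is correct and is exactly this route, carried out in both directions; the constants you obtain are fine.

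For the second equivalence (the generating case), your proposal is genuinely incomplete, and you flag the gap yourself. The convolution step $\mu\mapsto\mu*\tilde\mu$ does force $\mu$ to have an atom, and the bound $|z_0^{n}-1|\le\sqrt{2\delta/c}$ for an atom of mass $c$ follows from $c\,|z_0^{n}-1|^2\le\int_\T|z^n-1|^2\,d\mu=2\,\mathrm{Re}\,(1-\widehat\mu(n))$. The difficulty is exactly where you put it: peeling one atom replaces $\delta$ by roughly $(\delta+\sqrt{2c\delta})/(1-c)$, and iterating this over possibly infinitely many atoms of small mass gives no control. Nothing in your outline prevents the bound from blowing up before the remaining measure becomes continuous, so as written the argument does not close.

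One concrete suggestion: the generating hypothesis can be made effective \emph{once}, not asymptotically. Choosing $q_1,\dots,q_r\in Q$ with $\sum a_iq_i=1$ gives $|z-1|\le\bigl(\sum_i|a_i|\bigr)\max_i|z^{q_i}-1|$ for every $z\in\T$, so every atom $z_j$ of mass $c_j$ satisfies $|z_j-1|\le C\sqrt{2\delta/c_j}$ with $C$ depending only on $Q$. This localizes all large atoms near $1$ uniformly, and lets you handle the discrete part globally rather than one atom at a time; it is a more promising organization than iterative peeling. Since the paper defers the proof to \cite{BG1,BG2}, you should consult those references for a complete argument before committing to a particular strategy.
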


Note that the necessity of condition $(1)_\varepsilon$ for some $\varepsilon>0$ is clear: indeed, $(1)_\varepsilon$ for a given measure $\mu\in\mathcal P(\T)$ is just the condition appearing in the definition of a Kazhdan set for the unitary operator $U$ defined on $L^2(\mu)$ by $Uf(z)=zf(z)$, $f\in L^2(\mu)$. Note also that $(1)_\varepsilon$ is easily seen to imply that $Q$ is a generating subset of $\Z$ (if $Q\subseteq p\Z$ for some $p\ge 2$, consider the measure $\mu:=\delta_{e^{2i\pi/p}}$); so any Kazhdan subset of $\Z$ is generating. More generally, Kazhdan subsets with non-empty interior in a locally compact group are generating.

\smallskip It will be more convenient for us to speak of Kazhdan \emph{sequences} rather than Kazhdan sets. Of course, a sequence $(n_k)_{k\geq 0}$ of elements of $\Z$ is said to be Kazhdan if the set $Q=\{ n_k;\; k\geq 0\}$ is Kazhdan.  
Here are examples of \ka\ sequences: $n_{k}:=k$, $n_{k}:=k^{2}$, or 
$n_{k}:=p(k)$ where $p\in\Z[X]$ is a non-constant polynomial such that the integers $p(k)$, $k\geq 0$, have no non-trivial common divisor.  More generally, any generating sequence $(n_{k})_{k\ge 0}$  such that $(n_{k}\theta )_{k\ge 0}$ is uniformly distributed $\bmod$ 1 for any $\theta \in\R\setminus \Q$ is a \ka\ sequence (a classical reference for uniform distribution is \cite{KuiNie}). This result, which 
gives an answer to a question of Shalom (\cite[Question 7.12]{BdHV}),  follows from Theorem \ref{Th 1}. 
On the other hand, if $n_k>0$ and ${n_{k+1}/n_{k}}\rightarrow{\infty}$ as ${k}\rightarrow{\infty}$, then $\nkp{0}$ is non-\ka; and likewise if $n_{k}$ divides $n_{k+1}$ for every $k\ge 0$.  Thus, for instance, $(2^{k})_{k\ge 0}$ is not a \ka\ sequence; but the rather ``close" sequence $(2^{k}+k)_{k\geq 0}$ turns out to be Kazhdan.
We refer to \cite{BG1, BG2, Ch, BaGrLyons} for more on \ka\ sequences.

\subsection{Rigidity sequences}\label{ss:1b}
According to Furstenberg and Weiss \cite{Fur}, a sequence of positive integers $(n_{k})_{k\ge 0}$ is said to be \emph{rigid} for a measure-preserving (dynamical) system $(X,\mathcal{B},m;T)$ on a probability space $(X,\mathcal{B},m)$, if ${m(T^{-n_{k}}A\, \Delta\, A)}\rightarrow{0}$ as ${k}\rightarrow{\infty}$ for every $A\in\mathcal{B}$. If we denote by $U_{T}$ the associated Koopman operator 
${f}\mapsto{f\circ T}$ on $L^{2}(X,\mathcal{B},m)$, this is equivalent to requiring that 
${||U_{T}^{n_{k}}f-f||}\rightarrow{0}$ as ${k}\rightarrow{\infty}$ for every $f\in L^{2}(X,\mathcal{B},m)$. A \emph{rigidity sequence} is a sequence $(n_{k})_{k\ge 0}$ which happens to be rigid for some weakly mixing dynamical system $(X,\mathcal{B},m\,;\,T)$. Recall that the \emph{weakly mixing} systems are those for which the spectral measure of the  operator $U_{T}$ acting on $L^{2}_{0}(X,\mathcal{B},m):=\{f\in L^{2}(X,\mathcal{B},m)\,;\,\int_{X}f\,dm=0\}$ is continuous. See for instance \cite{Wa} for more on this definition and on measurable dynamics in general.
\par\smallskip
Rigidity sequences were characterized in \cite{BDLR} and \cite{EG} in terms of Fourier coefficients of continuous measures on $\T$:
\begin{theorem}
\label{Th 2}
 A sequence of positive integers $(n_{k})_{k\ge 0}$ is a rigidity sequence if and only if there exists a measure $\mu \in\mathcal{P}_{c}(\T)$ such that ${\wh{\mu }(n_{k})}\rightarrow{1}$ as 
 ${k}\rightarrow{\infty.}$
\end{theorem}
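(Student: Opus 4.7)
My plan is to prove the two implications separately. The forward direction ($\Rightarrow$) is a quick spectral-theorem argument, while the reverse direction ($\Leftarrow$) is the substantive part and will require invoking a Gaussian measure-preserving construction.

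For ($\Rightarrow$), I would fix a weakly mixing system $(X, \mathcal{B}, m; T)$ for which $(n_{k})_{k\ge 0}$ is rigid and pick any unit vector $f \in L^{2}_{0}(X, \mathcal{B}, m)$. Its scalar spectral measure $\mu$ with respect to $U_{T}$ lies in $\mathcal{P}(\T)$, and weak mixing forces the maximal spectral type of $U_{T}$ on $L^{2}_{0}$ to be continuous, so in fact $\mu \in \mathcal{P}_{c}(\T)$. The spectral identity
$$\|U_{T}^{n_{k}}f - f\|^{2} = \int_{\T}|z^{n_{k}}-1|^{2}\,d\mu(z) = 2 - 2\,\mathrm{Re}\,\widehat{\mu}(n_{k})$$
combined with rigidity then gives $\mathrm{Re}\,\widehat{\mu}(n_{k}) \to 1$, and $|\widehat{\mu}(n_{k})| \le 1$ upgrades this to $\widehat{\mu}(n_{k}) \to 1$.

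For ($\Leftarrow$), starting from $\mu \in \mathcal{P}_{c}(\T)$ with $\widehat{\mu}(n_{k}) \to 1$, I would first symmetrize by setting $\sigma := \tfrac{1}{2}(\mu + \check{\mu})$ with $\check{\mu}$ the push-forward of $\mu$ under $z \mapsto \bar{z}$: this produces a continuous symmetric probability measure on $\T$ with $\widehat{\sigma}(n) = \mathrm{Re}\,\widehat{\mu}(n)$ and hence $\widehat{\sigma}(n_{k}) \to 1$. The multiplication operator $M_{z} : f \mapsto zf$ on $L^{2}(\T,\sigma)$ is unitary, and the estimate $\|M_{z}^{n_{k}}p - p\|^{2} \le \|p\|_{\infty}^{2}\,(2 - 2\widehat{\sigma}(n_{k}))$ for trigonometric polynomials $p$, combined with density of polynomials in $L^{2}(\T,\sigma)$ and uniform boundedness of the $M_{z}^{n_{k}}$, gives $M_{z}^{n_{k}} \to I$ strongly on $L^{2}(\T,\sigma)$.

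The heart of the argument is then to turn this strong convergence into rigidity of an honest weakly mixing measure-preserving system. Here I would invoke the Gaussian dynamical system $(X_{\sigma}, \mathcal{B}_{\sigma}, m_{\sigma}; T_{\sigma})$ associated to $\sigma$, whose Koopman operator on $L^{2}_{0}$ is unitarily equivalent to the second quantization of $M_{z}$ acting on the symmetric Fock space $\bigoplus_{n\ge 1} L^{2}(\T,\sigma)^{\odot n}$. Continuity of $\sigma$ is precisely what makes this system weakly mixing. A standard telescoping argument on simple tensors propagates $M_{z}^{n_{k}} \to I$ strongly on $L^{2}(\sigma)$ to each symmetric tensor power, and then density plus uniform boundedness extends this to $U_{T_{\sigma}}^{n_{k}} \to I$ strongly on $L^{2}_{0}$, which is rigidity of this system along $(n_{k})_{k\ge 0}$.

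The main obstacle is the Gaussian construction itself -- manufacturing a weakly mixing measure-preserving system with the prescribed spectral type from a continuous symmetric probability measure on $\T$ -- but this is classical ergodic theory and may be invoked from a standard reference (e.g.\ Cornfeld--Fomin--Sinai). The spectral and functional-analytic reductions on either side are routine by comparison.
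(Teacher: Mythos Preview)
Your argument is correct, but note that the paper does not actually prove Theorem~\ref{Th 2}: it is stated in the introduction as a known characterization and attributed to \cite{BDLR} and \cite{EG}, so there is no in-paper proof to compare against. That said, the route you take---spectral measure of a nonzero $f\in L^2_0$ for the forward direction, and the Gaussian dynamical system built on a symmetrized continuous measure for the converse---is exactly the classical construction used in those references (see in particular \cite{EG}). The steps you sketch (continuity of the spectral type under weak mixing, symmetrization of $\mu$ to get a real-valued Fourier transform, weak mixing of the Gaussian system associated with a continuous $\sigma$, and propagation of the strong convergence $M_z^{n_k}\to I$ through the Wiener chaos decomposition via density and uniform boundedness) are all standard and go through as you describe.
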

Rigidity sequences are studied in detail in several papers, among which we mention   
\cite{BDLR, EG, A, FT, FK, Grie}. All examples of non-\ka\ sequences given above can be seen to be rigidity sequences. Adams proved in \cite{A}
that if there exists an irrational $z\in\T$ (\mbox{\it i.e.} $z$ is not a root of $1$) such that ${z^{n_{k}}}\rightarrow{1}$ as ${k}\rightarrow{\infty}$, then $(n_{k})_{k\ge 0}$ is a rigidity sequence; and a simpler proof of this result was found by Fayad and Thouvenot in \cite{FT}. The result was further generalized in \cite{BaGrLyons}, and this was applied to the resolution of a conjecture of Lyons \cite{L} related to Furstenberg's 
$\times 2\,$-$\times 3$ conjecture. On the other hand, examples of rigidity sequences $(n_{k})$ with the property that the set $\{ z^{n_k};\; k\geq 0\}$ is dense in $\T$ for every irrational $z\in\T$ 
were constructed in \cite{FK}.  Furthermore, Griesmer \cite{Grie} proved that there exist rigidity sequences $(n_{k})$ with the property that every translate $R$ of the set $\{n_k\,;\, k\ge 0\}$ is a \emph{set of recurrence} (in the terminology of \cite{FurstBook}, a \emph{Poincar\'e set}), which means that for any measure-preserving system $(X,\mathcal{B},m;T)$ and every $A\in\mathcal B$ with $m(A)>0$, there exists $r\in R\setminus\{ 0\}$ such that $m(A\cap T^{-r}A) > 0$. In particular, these rigidity sequences $(n_k)$ are dense in $\Z$ for the Bohr topology. However we note, paraphrasing \cite{Katz}, that a rigidity sequence cannot be uniformly distributed  ``in any reasonable sense". For example, it follows from Theorem \ref{Th 2} that a sequence $(n_k)_{k\geq 0}$ such that $(n_k\theta)$ is equidistributed modulo 1 for every $\theta \in\R\setminus\Q$ cannot be a rigidity sequence.

\par\smallskip
In view of the characterizations of Theorems \ref{Th 1} and \ref{Th 2}, it comes as a natural problem to investigate the links between  (non-)\ka\ sequences and rigidity sequences: are rigidity sequences non-\ka? And what about the converse? It was one of the initial motivations of this paper  to answer these questions (``Yes'' for the first one, ``No'' for the second one).

 \subsection{Nullpotent sequences} 
The notion of nullpotent sequence (the term was coined by Rusza \cite{R}), is defined as follows.
\begin{definition}\label{Definition 8}
 Let $\nkp{0}$ be a sequence of integers. We say that $(n_k)$ is \emph{nullpotent} if there exists a Hausdorff group topology $\tau $ on $\Z$ (i.e.\ a Hausdorff topology which turns 
 $\Z$ into a topological group) such that ${\nk}\rightarrow{0}$  for $\tau $ as ${k}\rightarrow{\infty}$.
\end{definition}

\par\smallskip
{Nullpotent sequences (in $\Z$ or in general abelian groups) have been studied, under different names, by several authors; see for instance {\cite{Graev,Nien1,Nien2,R,ZP}} as well as the recent survey \cite{dikran} and the references therein. Protasov and Zelenyuk \cite{ZP} and many subsequent authors (see \cite{dikran})
use the name \emph{$T$-sequences} instead of nullpotent sequences. As we apply several results from \cite{R}, we prefer to use Ruzsa's terminology. The notion of nullpotent sequence is directly related to that of rigidity sequence: indeed, it is easy to show that rigidity sequences are nullpotent (see Proposition \ref{Proposition 9} below). }

\smallskip
{The following  characterization of nullpotent sequences was obtained in \cite{R} and \cite{ZP}:
a sequence ${\nkp{0}}\subseteq\Z$ is  nullpotent if and only if, for any fixed $r\ge 1$, it is not possible to write any integer $n\neq 0$ as $n=\sum_{i=1}^{r}\varepsilon _{i}\,n_{k_{i}}$ with $\varepsilon _{i}=\pm 1$ and arbitrarily large indices $k_1,\dots ,k_r$.  }  

\smallskip
 Recall that a subset $D$ of $\Z$ is an \emph{additive basis} of $\Z$ if there exists some $r\in\N$ such that any integer $n\in\Z$ can be written as $n=\sum_{i=1}^r \varepsilon_i d_i$, where $d_i\in D$ and $\varepsilon_i=\pm 1$; and that $D$ is an \emph{asymptotic basis} of $\Z$ if this holds true for all but finitely many $n\in\Z$. In view of the above characterization of nullpotent sequences, it is not hard to convince oneself that if $(n_{k})_{k\ge 0}$ is a nullpotent sequence, then the associated set $\{n_{k}\,;\,k\ge 0\}$ cannot be an asymptotic basis of $\Z$. In fact, it is shown in \cite[Theorem 2]{R} that if $(n_k)$ is nullpotent then, for each fixed $r\in\N$, the set of all integers $n$ which can be written as
$n=\sum_{i=1}^{r}\varepsilon _{i}\,n_{k_{i}}$ with $\varepsilon _{i}=\pm 1$
is of density zero in $\Z$. On the other hand, it is not hard to check (for example by using Theorem \ref{Th 1} and the estimates in Fact \ref{Fact 0} below) that asymptotic bases of $\Z$ are Kazhdan sets. This applies in particular to the set of all squares, by Lagrange's four-square theorem, or to the set of all primes, by Vinogradov's theorem. Therefore,  
 it  comes as a natural question to wonder whether there exist \ka\ 
sequences  of integers which do not form (asymptotic) bases of $\Z$. This was asked by Martin Kassabov to the first-named author in private communication. We answer this question affirmatively in Section \ref{Section 2} of the paper (Corollary \ref{cor:base}), by considering random sequences of integers. More precisely, we show that there exist Kazhdan sets which are even nullpotent.

\subsection{Structure of the paper} The remaining of the paper is organized as follows. Section \ref{prelim} contains a few preliminary facts.  
In Section~\ref{Section 2}, we study the mutual implications and non-implications between the properties we are considering. We show that rigidity sequences are both non-\ka\ and nullpotent (Corollary \ref{Theorem 3} and Corollary 
\ref{rigid/nullpotent}). We also show that a sequence which converges to zero for some precompact group topology on $\Z$ is a rigidity sequence (Theorem~\ref{TBrigid}); the converse is false. On the other hand, we observe that there there exist non-\ka\ sequences which are not nullpotent, and hence non-rigid (Proposition \ref{Proposition 4}); and we show that in a suitable probabilistic setting, almost all sequences are both Kazhdan and nullpotent (Theorem \ref{Theorem 10}). In Section~\ref{Section 3}, we use Baire category methods to obtain  a useful characterization of rigidity sequences (Theorem \ref{Theorem 5}), which allows us to retrieve all examples from 
\cite{BDLR,EG,A,FT,BaGrLyons}; and we strengthen this further in Theorem \ref{vrai?}. 
These results shed in particular an interesting light on the constructions of \cite{BaGrLyons} and \cite{FK}.
In Section~\ref{sect:bZ}, we exhibit an explicit example of a rigidity sequence which is dense in $\Z$ for the Bohr topology; and we also make some comments on Griesmer's original proof. We end by stating a few open problems.

\section{Preliminary facts}\label{prelim}
\subsection{Easy estimates} The following estimates on quantities of the form 
$|\much(n)-1|$ will be used repeatedly in the sequel, sometimes without explicit mention.
\begin{fact}\label{Fact 0}
 Let $\mu \in \mpt$. For any $m,n\in\Z$, we have
 \begin{enumerate}
\item[{(1)}] $|\much(n)-1|\le\ds\int_{\T}|z ^{n}-1|\,d\mu (z )\le\sqrt{2}\,\,|\much(n)-1|^{1/2}$;
\item[(2)] $|\wh{\mu}(m+n)-1|\le\sqrt{2}\,\Bigl(|\much(m)-1|^{1/2}+|\wh{\mu }(n)-1|^{1/2}\Bigr).$
\end{enumerate}
\end{fact}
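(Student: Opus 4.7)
The plan is to reduce both estimates to two elementary ingredients: the identity $\wh\mu(n) - 1 = \int_\T (z^n - 1)\,d\mu(z)$, and the pointwise bounds $|z^n - 1|^2 = 2 - 2\,\Re(z^n)$ valid on the unit circle, together with the multiplicative factorization $z^{m+n} - 1 = z^m(z^n - 1) + (z^m - 1)$.

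For (1), the left inequality is immediate from the triangle inequality applied to $\wh\mu(n) - 1 = \int_\T (z^n - 1)\,d\mu(z)$. For the right inequality, I would apply the Cauchy--Schwarz inequality to get
$$\int_\T |z^n - 1|\,d\mu(z) \leq \left(\int_\T |z^n - 1|^2\,d\mu(z)\right)^{1/2},$$
and then expand $|z^n - 1|^2 = (z^n - 1)(\bar z^n - 1) = 2 - z^n - \bar z^n$ on $\T$. Integrating gives $\int_\T |z^n - 1|^2\,d\mu(z) = 2\,\Re(1 - \wh\mu(n)) \leq 2\,|\wh\mu(n) - 1|$, which yields the claimed bound with constant $\sqrt 2$.

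For (2), the pointwise factorization $z^{m+n} - 1 = z^m(z^n - 1) + (z^m - 1)$ together with $|z^m| = 1$ gives $|z^{m+n} - 1| \leq |z^n - 1| + |z^m - 1|$. Integrating against $\mu$ and applying the left inequality of (1) on the left-hand side and the right inequality of (1) on the right-hand side yields
$$|\wh\mu(m+n) - 1| \leq \int_\T |z^m - 1|\,d\mu + \int_\T |z^n - 1|\,d\mu \leq \sqrt 2\,\bigl(|\wh\mu(m) - 1|^{1/2} + |\wh\mu(n) - 1|^{1/2}\bigr).$$

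There is no substantive obstacle here: the statement consists of two standard estimates relating $L^1$- and $L^2$-oscillations of the character $z \mapsto z^n$ with respect to $\mu$, and the second bound is simply a subadditivity-type consequence of the first obtained from the multiplicativity of characters. The only minor point to watch is keeping track of the factor $\sqrt 2$ coming from the identity $|z^n - 1|^2 = 2\,\Re(1 - z^n)$ on $\T$.
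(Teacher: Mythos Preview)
Your proof is correct and follows essentially the same approach as the paper: the paper also obtains the left inequality in (1) by the triangle inequality, the right inequality by Cauchy--Schwarz combined with the identity $\int_\T |z^n-1|^2\,d\mu = 2\,\Re(1-\wh\mu(n))\le 2\,|\wh\mu(n)-1|$, and then derives (2) from the pointwise bound $|z^{m+n}-1|\le |z^m-1|+|z^n-1|$ on $\T$.
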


\begin{proof}
The proof of (1) runs as follows:
\begin{align*}
|\much(n)-1|&\le\int_{\T}|z ^{n}-1|\,d\mu (z )\le\Bigl(\int_{\T}|z ^{n}-1|^{2}d\mu (z )\Bigr)^{1/2}\\
&=\Bigl(2\Re e\int_{\T}(1-z ^{n})\,d\mu (z )\Bigr)^{1/2}\le\sqrt{2}\,\Bigl|\int_{\T}(1-z ^{n})\,d\mu (z )\Bigr|^{1/2}\\
&=\sqrt{2}\,|\much(n)-1|^{1/2}.
\end{align*}
Part (2) follows from (1) since $\vert z^{m+n}-1\vert\leq \vert z^{m+n}-z^n\vert +\vert z^n-1\vert=\vert z^m-1\vert +\vert z^n-1\vert$ for every $z\in\T$.
\end{proof}

As a direct consequence of Fact \ref{Fact 0}, we get

\begin{fact}\label{Fact 00}
Let $(\mk)_{k\ge 0}$ and $\nkp{0}$ be two sequences of integers, and let $\varepsilon >0$. 
\par\smallskip
\begin{enumerate}
\item[{(1a)}] ${\much(n_{k})}\rightarrow{1}$ as ${k}\rightarrow{\infty}$ if and only if 
${\ds\int_{\T}|z ^{n_{k}}-1|\,d\mu (z )}\rightarrow{1}$; 
\par\smallskip
\item[{(1b)}] if $\sup\limits_{k\ge 0}|\much(n)-1|<\varepsilon $ then  
$\ds\sup_{k\ge 0}\int_{\T}|z ^{n_{k}}-1|\,d\mu (z )<\sqrt{2\varepsilon }$.
\par\medskip
\item[{(2a)}] If ${{\wh{\mu }(\mk)}\rightarrow{1}}$ and ${\wh{\mu }(\nk)}\rightarrow{1}$, then ${\wh{\mu }(\mk\pm\nk)}\rightarrow{1;}$
\par\medskip
\item[{(2b)}] if $\ds\sup_{k\ge 0}|\wh{\mu }(\nk)-1|<\varepsilon $ and $\ds\sup_{l\ge 0}|\wh{\mu }(m_l)-1|<\varepsilon $, then 
$
\ds\sup_{k,l\ge 0}|\wh{\mu }(\nk\pm m_l)-1|<2\,\sqrt{2\varepsilon }.
$
\end{enumerate}
\end{fact}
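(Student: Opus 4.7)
The plan is to deduce Fact~\ref{Fact 00} directly from the two inequalities in Fact~\ref{Fact 0}, with the only additional ingredient being the observation that for any $\mu\in\mpt$ and any $n\in\Z$,
$$|\wh{\mu}(-n)-1|=|\,\overline{\wh{\mu}(n)}-1\,|=|\wh{\mu}(n)-1|,$$
so that the ``$\pm$'' in parts (2a) and (2b) reduces to the ``$+$'' case after possibly replacing $n_k$ or $m_l$ by its opposite. (Note also that the ``$\rightarrow 1$'' appearing in the integral of (1a) is a misprint for ``$\rightarrow 0$''.)

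For (1a) and (1b), I would apply the chain of inequalities from Fact~\ref{Fact 0}(1),
$$|\much(n_k)-1|\le\int_{\T}|z^{n_k}-1|\,d\mu(z)\le\sqrt{2}\,|\much(n_k)-1|^{1/2}.$$
The upper bound gives both the ``only if'' direction of (1a) and the quantitative statement of (1b): if $\sup_{k}|\much(n_k)-1|<\varepsilon$, then
$$\sup_{k}\int_{\T}|z^{n_k}-1|\,d\mu(z)\le\sqrt{2}\,\varepsilon^{1/2}=\sqrt{2\varepsilon}.$$
The lower bound gives the ``if'' direction of (1a).

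For (2a) and (2b), I would directly invoke Fact~\ref{Fact 0}(2): for any $k,l$,
$$|\wh{\mu}(n_k\pm m_l)-1|\le\sqrt{2}\Bigl(|\wh{\mu}(n_k)-1|^{1/2}+|\wh{\mu}(\pm m_l)-1|^{1/2}\Bigr),$$
where I use the remark above to rewrite $|\wh{\mu}(\pm m_l)-1|=|\wh{\mu}(m_l)-1|$. Specializing to $l=k$ yields (2a) by letting $k\to\infty$; and under the hypotheses of (2b) the right-hand side is bounded by $\sqrt{2}(\varepsilon^{1/2}+\varepsilon^{1/2})=2\sqrt{2\varepsilon}$, uniformly in $k$ and $l$.

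There is no real obstacle here: the whole statement is a bookkeeping exercise around Fact~\ref{Fact 0}, the only non-mechanical observation being the conjugation identity $\wh{\mu}(-n)=\overline{\wh{\mu}(n)}$ used to absorb the sign.
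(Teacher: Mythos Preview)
Your proposal is correct and matches the paper's approach exactly: the paper simply states that Fact~\ref{Fact 00} is ``a direct consequence of Fact~\ref{Fact 0}'', and your write-up spells out precisely that deduction (including catching the obvious misprint in (1a)). The only cosmetic point is that in (1b) and (2b) you should note that the \emph{strict} inequality $\sup_k|\much(n_k)-1|<\varepsilon$ in the hypothesis lets you replace $\varepsilon$ by some $\varepsilon'<\varepsilon$, which is what makes the conclusion strict as well.
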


\subsection{The set of rigid measures} Let $(n_k)_{k\geq 0}$ be a sequence of integers. A measure $\mu\in\mathcal P(\T)$ is said to be \emph{rigid} for $(n_k)$ if $\widehat\mu(n_k)\to 1$ as $k\to\infty$. We denote by $\rnk$ the set of all such measures: 

\[ \mathcal R^{(n_k)}:=\bigl\{ \mu\in\mathcal P(\T)\,;\,\widehat\mu(n_k)\to 1\quad\hbox{as $k\to\infty$}\bigr\}.\]

\smallskip\noindent
This set has several interesting stability properties, which we summarize in the next lemma. These properties are well known, and can be found for example in \cite[Chapter 7]{Nad}; but we include the proofs for convenience of the reader. 

\smallskip
The following notations will be used in this section and elsewhere in the paper. If $\mu\in\mathcal P(\T)$, we denote by $L^1(\mu)$ the set of all measures absolutely continuous with respect to $\mu$. Also, denote by $G(\mu)$ the subgroup of $\T$ generated by the support of $\mu$. 
For any $\mu\in\mathcal P(\T)$ and any $p\in\Z$, denote by $\mu_{{\#} p}$ the image measure of $\mu$ under the map $z\mapsto z^p$ from $\T$ into itself. (In particular, $\mu_{\#0}=\delta_1$.) Lastly, write $e_n(z):=z^n$ for every $n\in \Z$ and every $z\in\T$.

\begin{lemma}\label{Fact 0 bis}
 Let $\nkp{0}$ be a sequence of integers.
 \begin{itemize}
 \item[\rm (1)] The set $\mathcal R^{(n_k)}$ is convex, contains the Dirac mass $\delta_1$, and is closed under convolution.
 \item[\rm (2)]  For any $p\in\Z$, the set $\mathcal R^{(n_k)}$ is closed under the map $\mu\mapsto\mu_{\# p}$. 
 \item[\rm (3)] The set $\mathcal R^{(n_k)}$ is hereditary for absolute continuity: if $\mu\in\mathcal R^{(n_k)}$ and if $\nu \in \mpt$ is absolutely continuous with respect to $\mu $, then $\nu\in\mathcal R^{(n_k)}$.
\end{itemize}
 \end{lemma}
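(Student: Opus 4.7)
The plan is to verify the three claims in turn, all following from elementary manipulations with Fourier coefficients combined with the estimates of Fact~\ref{Fact 0}. Claim~(1) is immediate: $\widehat{\delta_1}(n)=1$ for every $n\in\Z$, so $\delta_1\in\rnk$ trivially; convexity follows from the linearity $\widehat{t\mu+(1-t)\nu}=t\,\widehat\mu+(1-t)\widehat\nu$; and closure under convolution follows from the multiplicativity $\widehat{\mu\ast\nu}(n)=\widehat\mu(n)\,\widehat\nu(n)$.

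For (2), I would use the change-of-variables identity $\widehat{\mu_{\#p}}(n)=\widehat\mu(pn)$, which reduces the claim to showing that $\widehat\mu(p\,n_k)\to 1$ whenever $\widehat\mu(n_k)\to 1$. For this I rely on the elementary pointwise bound $|z^p-1|\le|p|\,|z-1|$ on $\T$ (coming from the factorization $z^p-1=(z-1)(1+z+\cdots+z^{p-1})$ for $p\ge 1$, extended by $|z^{-p}-1|=|z^p-1|$ otherwise), combined with Fact~\ref{Fact 0}(1):
\[
|\widehat\mu(pn_k)-1|\le\int_\T|z^{pn_k}-1|\,d\mu(z)\le|p|\int_\T|z^{n_k}-1|\,d\mu(z).
\]
Fact~\ref{Fact 0}(1a) shows that the right-hand side tends to $0$, which finishes (2).

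For (3), which I expect to be the only mildly delicate point (because the density $f$ lies merely in $L^1(\mu)$ rather than $L^\infty(\mu)$, ruling out a direct dominated-convergence bound), I would write $\nu=f\mu$ with $f\in L^1(\mu)$ non-negative and $\int f\,d\mu=1$, and show that $\int(z^{n_k}-1)f\,d\mu\to 0$ by truncation. Fact~\ref{Fact 0}(1a) gives $\int_\T|z^{n_k}-1|\,d\mu\to 0$, and since $|z^{n_k}-1|\le 2$ pointwise, setting $f_M:=f\,\mathbf{1}_{\{f\le M\}}$ I split
\[
\Bigl|\int(z^{n_k}-1)f\,d\mu\Bigr|\le M\int_\T|z^{n_k}-1|\,d\mu\;+\;2\,\|f-f_M\|_{L^1(\mu)}.
\]
Given $\varepsilon>0$, I first pick $M$ large so that the second term is $<\varepsilon/2$ (possible since $f\in L^1(\mu)$), and then pick $k$ large so that the first term is also $<\varepsilon/2$. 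This yields $\widehat\nu(n_k)\to 1$, completing the proof.
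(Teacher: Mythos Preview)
Your proof is correct and follows essentially the same approach as the paper: parts (1) and (2) are identical, and for (3) the paper phrases the argument as ``$e_{n_k}\to 1$ in $L^1(\mu)$, hence in $\mu$-measure, hence in $\nu$-measure by absolute continuity, hence in $L^1(\nu)$ by uniform boundedness'', which is the conceptual packaging of your explicit truncation-and-split estimate.
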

\begin{proof} (1) This is obvious.

(2) Observe that $\widehat{\,\mu_{\# p}}(n)=\widehat\mu(pn)$ for every $n\in\Z$. This gives
\[ \vert \widehat{\,\mu_{\# p}}(n)-1\vert\leq \int_\T \vert z^{pn}-1\vert\, d\mu\leq \vert p\vert\int_\T \vert z^n-1\vert\, d\mu,\]
so the result follows from Fact \ref{Fact 00}. 

(3) By Fact \ref{Fact 00}, $e_{n_k}\to 1$ in the $L^1(\mu)$ norm, and hence in $\mu$-measure. 
Since $\nu$ is absolutely continuous with respect to $\mu$, it follows that $e_{n_k}\to 1$ in $\nu$-measure, and hence in the $L^1(\nu)$ norm because the sequence $(e_{n_k})$ is uniformly bounded. So $\widehat\nu(n_k)\to 1$, by Fact \ref{Fact 00} again.
 \end{proof}

\begin{remark}\label{quivadesoi}
If $\nu$ is a \emph{complex} measure with $\Vert \nu\Vert=1$ which is absolutely continuous with respect to $\mu\in\mathcal R^{(n_k)}$, then property (3) of Lemma \ref{Fact 0 bis} applied to $\vert \nu\vert$ yields that $\widehat\nu(n_k)\to \widehat\nu(0)$, which is equal to $1$ only if $\nu$ is a probability measure. If we take for example $d\nu =\bar z^q\, d\mu$ for some $q\in\N$, this will hold if and only if $\bar z^q=1$ $\mu$-{a.e.}, \mbox{\it i.e.} $\mu$ is supported on the $q$-roots of $1$. In particular, if $\mu$ is a continuous measure then $\widehat\mu(n_k)$ and $\widehat\mu(n_k+q)$ cannot tend to $1$ simultaneously. 
In the same spirit, the following simple fact may be worth mentioning. Let $\nkp{0}$ be a sequence of integers, and let $\mu\in\rnk$. Let also $(p_k)$ and $(q_k)$ be two arbitrary subsequences of $(n_k)$. Then $\mu\bigl(\{ z\in\T;\; z^{q_k-p_k} \;\hbox{has a limit $\phi(z)\neq 1$}\}\bigr)=0$. This is indeed clear since $\vert z^{q_k-p_k}-1\vert=\vert z^{q_k}-z^{p_k}\vert\to 0$ in $L^1(\mu)$.
 \end{remark}
 
In the next two corollaries, we point out some useful consequences of Lemma \ref{Fact 0 bis}. 
 \begin{corollary}\label{density} For any $\mu\in\mathcal R^{(n_k)}$, the closure of $\mathcal R^{(n_k)}\cap L^1(\mu)$ in $\mathcal P(\T)$ contains every measure supported on $\overline{G(\mu)}$. In particular, if $(n_k)$ is a rigidity sequence, then $\mathcal R^{(n_k)}\cap\mathcal P_c(\T)$ is dense in $\mathcal P(\T)$.
 \end{corollary}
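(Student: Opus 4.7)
The plan is to combine the three stability properties of $\rnk$ collected in Lemma \ref{Fact 0 bis} -- convex and convolution closure, invariance under the power map $\nu\mapsto\nu_{{\#}p}$, and hereditariness for absolute continuity -- in order to exhibit a single measure $\sigma\in\rnk$ whose support coincides with $\overline{G(\mu)}$. Once such a $\sigma$ is in hand, any probability measure $M$ supported on $\overline{G(\mu)}$ will be approximated weakly by renormalized Borel restrictions of $\sigma$; these restrictions are absolutely continuous with respect to $\sigma$ and therefore, by Lemma \ref{Fact 0 bis}(3), belong to $\rnk$.

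The construction of $\sigma$ goes in two steps. I first symmetrize by setting $\widetilde\mu := \tfrac{1}{2}(\mu+\mu_{{\#}-1})\in\rnk$, so that $\mathrm{supp}(\widetilde\mu)=\mathrm{supp}(\mu)\cup\mathrm{supp}(\mu)^{-1}$; then I take $\sigma := \sum_{r\ge 1}2^{-r}\,\widetilde\mu^{*r}$. Each convolution power $\widetilde\mu^{*r}$ lies in $\rnk$ by convolution stability, and dominated convergence applied termwise to $\widehat\sigma(n_{k})$ gives $\sigma\in\rnk$. The key support observation is that $1\in\mathrm{supp}(\widetilde\mu^{*2})$ (one can write $1 = z\cdot z^{-1}$ for any $z\in\mathrm{supp}(\mu)$), which makes $\bigcup_{r\ge 1}\mathrm{supp}(\widetilde\mu^{*r})$ coincide with the group generated by $\mathrm{supp}(\widetilde\mu)$, namely $G(\mu)$; hence $\mathrm{supp}(\sigma)=\overline{G(\mu)}$. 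A routine mollification -- partition $\overline{G(\mu)}$ into small Borel pieces $(E_{j})$ on which a given continuous test function is nearly constant, and set $M_{n}:=\sum_{j}M(E_{j})\,\sigma|_{E_{j}}/\sigma(E_{j})$ -- then produces $M_{n}\ll \sigma$ lying in $\rnk\cap L^{1}(\sigma)$ with $M_{n}\to M$ weakly.

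For the ``In particular'' deduction, start with a continuous $\mu\in\rnk$. Since $\mu$ is atomless, its support is uncountable and must contain irrational elements of $\T$, forcing $\overline{G(\mu)}=\T$; moreover $\sigma$ inherits continuity from $\mu$ because sums and convolutions of continuous measures are continuous. Every $\nu\ll\sigma$ is then atomless, so the approximants above all lie in $\rnk\cap\mathcal P_{c}(\T)$, giving weak density in $\mpt$. The main technical delicacy in matching the main statement verbatim is that the construction naturally produces approximants absolutely continuous with respect to the auxiliary $\sigma$ rather than with respect to $\mu$ itself; bridging this to $L^{1}(\mu)$ requires either an implicit identification of $\sigma$ with $\mu$ in its role as a ``large'' rigid measure, or verifying the condition $\mathrm{supp}(\mu)=\overline{G(\mu)}$ up-front, a condition that is automatic in the continuous case needed for the ``In particular'' part.
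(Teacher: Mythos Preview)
Your approach differs substantively from the paper's. The paper argues directly with closures: starting from the observation that $\delta_{z}\in\overline{\mathcal H}$ for each $z\in\mathrm{supp}(\mu)$ (approximate $\delta_z$ by normalised restrictions $\mu|_{V}/\mu(V)$), it then claims that $\overline{\mathcal H}$ is closed under convolution, whence $\delta_{z}\in\overline{\mathcal H}$ for all $z\in G(\mu)$; convexity and closedness then give all measures supported on $\overline{G(\mu)}$. This is shorter than your construction of an auxiliary $\sigma$, but --- and this is the point you have put your finger on --- the paper's argument has a gap at exactly the place where you identify a difficulty. The set $\mathcal H=\rnk\cap L^{1}(\mu)$ is \emph{not} in general closed under convolution (if $\nu_{1},\nu_{2}\ll\mu$ there is no reason for $\nu_{1}*\nu_{2}\ll\mu$), so one cannot conclude that $\overline{\mathcal H}$ is. In fact the first sentence of the corollary is false as stated: take $\mu=\delta_{w}$ with $w$ irrational and $w^{n_{k}}\to 1$; then $\rnk\cap L^{1}(\mu)=\{\delta_{w}\}$, whose closure is $\{\delta_{w}\}$, yet $\overline{G(\mu)}=\T$.

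What both arguments \emph{do} establish is the weaker statement that $\overline{\rnk}$ contains every measure supported on $\overline{G(\mu)}$, together with the ``In particular'' clause. For the latter, the paper's argument works verbatim once one replaces $\mathcal H$ by $\rnk\cap\mathcal P_{c}(\T)$, which \emph{is} closed under convolution; your argument works because your $\sigma$ is continuous whenever $\mu$ is. These are exactly the forms of the corollary that are invoked later in the paper (in Corollary~\ref{densitybis} and Theorem~\ref{Mazur}), so nothing downstream is affected. Your route via an explicit ``fat'' rigid measure $\sigma$ is a perfectly good alternative to the paper's closure argument, with the bonus that it makes the obstruction to the literal $L^{1}(\mu)$ statement visible rather than hidden.

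One small correction to your last paragraph: the condition $\mathrm{supp}(\mu)=\overline{G(\mu)}$ is \emph{not} automatic for continuous $\mu$ --- a continuous measure can be supported on a Cantor set strictly contained in $\T$. But you do not need this for the ``In particular'' part, since your approximants lie in $L^{1}(\sigma)\subseteq\mathcal P_{c}(\T)$ regardless of whether they lie in $L^{1}(\mu)$.
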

 \begin{proof} Fix $\mu\in\mathcal R^{(n_k)}$, and set $\mathcal H:={\mathcal R^{(n_k)}\cap L^1(\mu)}$. By Lemma \ref{Fact 0 bis}, $\mathcal H$ is a convex subset of $\mathcal P(\T)$ closed under convolution and hereditary for absolute continuity. If $z\in {\rm supp}(\mu)$, then the Dirac mass $\delta_z$ can be approximated by measures absolutely continuous with respect to $\mu$. Hence, $\overline{\mathcal H}$ contains $\delta_z$ for every $z\in {\rm supp}(\mu)$. Now, $\overline{\mathcal H}$ is closed under convolution, because $\mathcal H$ is closed under convolution and the convolution product is separately continuous. So $\overline{\mathcal H}$ contains $\delta_z$ for every $z\in G(\mu)$. Since $\overline{\mathcal H}$ is also convex, it contains every measure $\nu\in\mathcal P(\T)$ whose support is finite and contained in ${G(\mu)}$; and since $\overline{\mathcal H}$ is closed, it contains in fact every measure supported on $\overline{G(\mu)}$.
 \par\smallskip
 If now $(n_k)$ is a rigidity sequence, let $\mu$ be a continuous measure belonging to $\mathcal R^{(n_k)}$. The support of $\mu$ is uncountable, so it contains an irrational $z$, 
 and hence $\overline{G(\mu)}=\T$. Since $\mathcal R^{(n_k)}\cap L^1(\mu)$ is contained in $\mathcal R^{(n_k)}\cap \mathcal P_c(\T)$, this shows that the latter set is dense in $\mathcal P(\T)$. 
  \end{proof}

  \begin{corollary}\label{densitybis} Set $E^{(n_k)}:=\bigcup\bigl\{ G(\mu);\; \mu\in\rnk\bigr\}$, and let us denote by $G^{(n_k)}$ the subgroup of $\T$ generated by $E^{(n_k)}$.  If $G^{(n_k)}$  is dense in $\T$, then $\rnk$ is dense in $\mathcal P(\T)$.
  \end{corollary}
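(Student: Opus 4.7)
The plan is to leverage the preceding Corollary \ref{density} together with the convolution-stability of $\rnk$ to show that $\overline{\rnk}$ contains $\delta_z$ for every $z\in G^{(n_k)}$, and then finish by convexity and weak-density of finitely supported measures.

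First, applying Corollary \ref{density} to any single $\mu\in\rnk$, the closure $\overline{\rnk}$ in $\mathcal P(\T)$ contains $\delta_z$ for every $z\in\overline{G(\mu)}$, and in particular for every $z\in G(\mu)$. Taking the union over $\mu\in\rnk$ this yields $\delta_z\in\overline{\rnk}$ for every $z\in E^{(n_k)}$. Now, since $\rnk$ is closed under convolution by Lemma \ref{Fact 0 bis}(1) and convolution on $\mathcal P(\T)$ is separately weak-continuous, the same argument used in the proof of Corollary \ref{density} shows that $\overline{\rnk}$ itself is closed under convolution.

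Next, each $G(\mu)$ is a subgroup of $\T$, so $E^{(n_k)}$ is stable under inversion, and consequently every $z\in G^{(n_k)}$ can be written as a finite product $z=z_1\cdots z_r$ with $z_1,\dots,z_r\in E^{(n_k)}$. Since $\delta_{z_i}\in\overline{\rnk}$ for each $i$, iterated convolution-stability of $\overline{\rnk}$ yields
\[
\delta_z \;=\; \delta_{z_1}\,\ast\,\cdots\,\ast\,\delta_{z_r}\;\in\;\overline{\rnk}.
\]
As $\rnk$ is convex by Lemma \ref{Fact 0 bis}(1), so is $\overline{\rnk}$, which therefore contains every finitely supported convex combination of such Dirac masses, i.e.\ every finitely supported probability measure with support in $G^{(n_k)}$.

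Finally, when $G^{(n_k)}$ is dense in $\T$, the set of finitely supported probability measures with support contained in $G^{(n_k)}$ is itself dense in $\mathcal P(\T)$ for the Prokhorov topology (this is the standard fact that discrete measures with rational weights supported on any prescribed countable dense subset of a compact metric space weak-approximate every Borel probability measure). Combining this with the previous step gives $\overline{\rnk}=\mathcal P(\T)$, which is the desired conclusion. There is no real obstacle here: the entire content is carried by Corollary \ref{density} and by the convolution-and-convexity structure of $\rnk$; the only point deserving any care is the fact, already handled in the proof of Corollary \ref{density}, that separate weak continuity of convolution suffices to transfer convolution-closedness from $\rnk$ to its weak closure.
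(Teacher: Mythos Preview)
Your proof is correct and follows essentially the same route as the paper's: use Corollary~\ref{density} to get $\delta_z\in\overline{\rnk}$ for every $z\in E^{(n_k)}$, then pass to $G^{(n_k)}$ by convolution-stability of $\overline{\rnk}$ and finish by convexity and density of finitely supported measures. The paper's version is terser but invokes precisely the same ingredients (``by convolution-invariance and convexity''), and your explicit justification of why separate continuity suffices to pass convolution-closedness to $\overline{\rnk}$ is exactly the point the paper already handled in the proof of Corollary~\ref{density}.
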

  \begin{proof} By Corollary \ref{density}, the closure $\overline{\rnk}$ of $\rnk$ in $\mathcal P(\T)$ contains every measure $\nu$ whose support is contained in $G(\mu)$ for some $\mu\in\rnk$. In particular, $\delta_z\in \overline{\rnk}$ for every $z\in E^{(n_k)}$. 
  By convolution-invariance and convexity, it follows that $\overline{\rnk}$ contains every measure $\nu$ whose support is finite and contained in $G^{(n_k)}$, and this concludes the proof.
  \end{proof}
  
  \begin{remark}The  converse statement is obviously true: if $\rnk$ is dense in $\mathcal P(\T)$, then $E^{(n_k)}$ is infinite and hence the group $G^{(n_k)}$ is dense in $\T$.
  \end{remark}

\section{Implications and non-implications}\label{Section 2}

In this section, we show that rigidity sequences are both non-Kazhdan and nullpotent, but that as far as only these classes of sequences are considered, the converse implications fail in the strongest possible senses, \mbox{\it i.e.} there exist non-Kazhdan sequences which are not nullpotent, and nullpotent sequences which are Kazhdan. In particular, there exist Kazhdan sequences which are not asymptotic bases of $\Z$.
  
  \subsection{Rigid implies non-Kazhdan and nullpotent}\label{ss:3a} Our first aim is to prove that rigidity sequences are non-Kazhdan.
We will in fact prove the following slightly stronger result.

\begin{theorem}\label{Mazur} Let $(n_k)_{k\geq 0}$ be a rigidity sequence. Then, for any measure $\nu\in\mathcal R^{(n_k)}$ and any $\varepsilon >0$, one can find a measure $\mu\in \mathcal R^{(n_k)}\cap \mathcal P_c(\T)$ such that $$\sup_{k\geq 0} \vert \widehat\mu(n_k)-\widehat\nu(n_k)\vert<\varepsilon.$$
\end{theorem}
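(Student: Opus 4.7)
The theorem's name, together with the convex structure of $C:=\rnk\cap\mpt$, strongly suggests a Mazur-type argument. The set $C$ is convex: $\rnk$ is convex by Lemma \ref{Fact 0 bis}\,(1), and any convex combination of continuous probability measures is again continuous. Moreover, since $(n_k)$ is a rigidity sequence, Corollary \ref{density} asserts that $C$ is dense in $\mpt$ for the Prokhorov topology; in particular, $\nu$ is a weak limit of measures $\mu_j\in C$. This only gives the pointwise-in-$k$ convergence $\widehat{\mu_j}(n_k)\to\widehat{\nu}(n_k)$, whereas the theorem asks for uniform control in $k$.

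The plan is to encode the problem inside the Banach space $c_0$ and invoke Mazur's theorem. Define the affine map $\Phi:\rnk\to c_0$ by $\Phi(\mu):=(\widehat\mu(n_k)-1)_{k\ge 0}$, which is indeed $c_0$-valued by the very definition of $\rnk$. The image $\Phi(C)$ is a convex subset of $c_0$, and the desired conclusion is equivalent to $\Phi(\nu)\in\overline{\Phi(C)}^{\|\cdot\|_\infty}$. By Mazur's theorem, this norm closure coincides with the weak (\textit{i.e.}\ $\sigma(c_0,\ell^1)$-) closure of $\Phi(C)$, so it suffices to show that $\Phi(\nu)$ lies in the weak closure of $\Phi(C)$ in $c_0$.

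To this end I would verify that $\Phi$ is continuous from $\mpt$ with the Prokhorov topology to $c_0$ with its weak topology. Any $(a_k)\in\ell^1=c_0^*$ gives rise to the functional
\[ \mu\longmapsto \sum_{k\ge 0}a_k\bigl(\widehat{\mu}(n_k)-1\bigr)=\int_\T f_a\,d\mu-\sum_{k\ge 0}a_k,\]
where $f_a(z):=\sum_{k\ge 0}a_k z^{n_k}$ is a continuous function on $\T$ by the Weierstrass M-test; hence this functional is continuous for the Prokhorov topology. Applying this to the measures $\mu_j\in C$ provided by Corollary \ref{density} gives $\Phi(\nu)\in\overline{\Phi(C)}^{\sigma(c_0,\ell^1)}$, and Mazur finishes the proof. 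The only conceptual subtlety---and the reason the statement is non-trivial---is the gap between pointwise and uniform convergence of Fourier coefficients along $(n_k)$; the convexity of $C$, combined with the identification $c_0^*=\ell^1$ and Mazur's theorem, is precisely what bridges this gap.
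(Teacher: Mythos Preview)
Your proof is correct and follows essentially the same route as the paper: define the affine map $\mu\mapsto(\widehat\mu(n_k)-1)_{k\ge0}$, use Corollary \ref{density} to get Prokhorov density of $C=\rnk\cap\mathcal P_c(\T)$, translate this into weak-$c_0$ density of $\Phi(C)$ around $\Phi(\nu)$, and invoke Mazur's theorem. The paper phrases the continuity step slightly differently---mapping all of $\mathcal P(\T)$ into $\ell_\infty$ with its $w^*$ topology and then noting that on $c_0$ this coincides with the weak topology---whereas you verify weak-$c_0$ convergence directly by testing against $\ell^1$ via the continuous function $f_a(z)=\sum_k a_k z^{n_k}$; but the content is the same. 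The one imprecision is your sentence ``$\Phi$ is continuous from $\mathcal P(\T)$ \dots\ to $c_0$'': $\Phi$ does not land in $c_0$ for arbitrary $\mu\in\mathcal P(\T)$, only for $\mu\in\rnk$; however your actual argument only applies $\Phi$ to $\mu_j\in C$ and to $\nu$, all of which lie in $\rnk$, so nothing is affected.
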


\begin{proof}Recall first the classical  Mazur's theorem (see \mbox{e.g.} \cite[page 216]{Meg}): in a normed space, the weak closure of any convex set is equal to its norm closure.  Recall also that $\ell_\infty(\Z_+)$ is the dual space of $\ell_1(\Z_+)$, which is itself the dual space of $c_0(\Z_+)$; so we can consider the $w^*$ topology on $\ell_\infty(\Z_+)$, and the topology induced on $c_0(\Z_+)$ is the weak topology of $c_0(\Z_+)$.

Consider the map $T:\mathcal P(\T)\to \ell_\infty(\Z_+)$ defined by 
\[ T\mu:=\bigl( \widehat\mu(n_k)-1\bigr)_{k\geq 0}.\]
Since $T\mu-T\nu=(\widehat\mu(n_k)-\widehat\nu(n_k))_{k\geq 0}$, what we have to show is the following: if $\nu\in\mathcal R^{(n_k)}$, then 
\[ T\nu\in \overline{T\bigl(\mathcal R^{(n_k)}\cap \mathcal P_c(\T)\bigr)}^{\Vert \;\cdot\;\Vert_\infty}.\]
Now, the map $T$ is affine, and continuous from $\mathcal P(\T)$ into $\ell_\infty(\Z_+)$ endowed with its $w^*$ topology. By Corollary \ref{density}, it follows that the set $\mathcal C:=T\bigl(\mathcal R^{(n_k)}\cap \mathcal P_c(\T)\bigr)$ is convex and $w^*$-dense in $T(\mathcal P(\T))$. But $T(\mathcal R^{(n_k)})$ is contained in $c_0(\Z_+)$ by the definition of $T$. So $T\bigl(\mathcal R^{(n_k)}\cap \mathcal P_c(\T)\bigr)$ is convex and \emph{weakly} dense in $T(\mathcal R^{(n_k)})\subseteq c_0(\Z_+)$, and hence \emph{norm}-dense in $T(\mathcal R^{(n_k)})$ by Mazur's theorem. This concludes the proof of Theorem \ref{Mazur}.
\end{proof}

From Theorem \ref{Mazur}, we immediately deduce 

  \begin{corollary}\label{Theorem 3} Rigidity sequences are non-Kazhdan.
\end{corollary}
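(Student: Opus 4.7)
The plan is to deduce the corollary immediately from Theorem \ref{Mazur} by applying it to the Dirac mass at $1$. The key observation is that $\delta_1\in\mathcal R^{(n_k)}$ trivially, since $\widehat{\delta_1}(n)=1$ for every $n\in\Z$.

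More precisely, fix any $\varepsilon>0$. Setting $\nu:=\delta_1$, Theorem \ref{Mazur} supplies a measure $\mu\in\mathcal R^{(n_k)}\cap\mathcal P_c(\T)$ with
\[\sup_{k\ge 0}\bigl|\widehat\mu(n_k)-1\bigr|=\sup_{k\ge 0}\bigl|\widehat\mu(n_k)-\widehat{\delta_1}(n_k)\bigr|<\varepsilon.\]
Since $\mu$ is continuous, in particular $\mu(\{1\})=0$. As $\varepsilon>0$ was arbitrary, condition $(2)$ of Theorem \ref{Th 1} (applied to the set $Q:=\{n_k\,;\,k\ge 0\}$) is satisfied, so $(n_k)_{k\ge 0}$ is non-Kazhdan.

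There is essentially no obstacle here; the whole work has been concentrated in Theorem \ref{Mazur}. One might worry that non-Kazhdan requires the inequality $\sup_{n\in Q}|\widehat\mu(n)-1|<\varepsilon$ to hold over all indices $k\ge 0$ and not just asymptotically, which is why using a rigid measure directly (for which $\widehat\mu(n_k)\to 1$ only as $k\to\infty$) is insufficient. This is precisely what Theorem \ref{Mazur} fixes: via Mazur's theorem, the convex set of Fourier transforms of continuous rigid measures is not only weakly dense but norm-dense in the Fourier image of $\mathcal R^{(n_k)}$ inside $c_0(\Z_+)$, and in particular one can uniformly approximate the constant sequence $(1)$ coming from $\delta_1\in\mathcal R^{(n_k)}$.
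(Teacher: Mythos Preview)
Your proof is correct and follows exactly the same route as the paper: apply Theorem~\ref{Mazur} with $\nu=\delta_1$ to obtain a continuous measure $\mu$ with $\sup_{k\ge 0}\vert\widehat\mu(n_k)-1\vert<\varepsilon$, then invoke Theorem~\ref{Th 1}. Your additional paragraph explaining why one cannot simply use a rigid measure directly (the need for uniform rather than asymptotic control) is a helpful clarification not spelled out in the paper, but the argument itself is identical.
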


\begin{proof} By Theorem \ref{Th 1}, it is enough to show that if $\nkp{0}$ is a rigidity sequence then, for any $\varepsilon >0$, one can find a measure $\mu\in\mathcal P_c(\T)$ such that
$\sup_{k\ge 0}|\widehat\mu (\nk)-1|<\varepsilon .$   This follows from Theorem~\ref{Mazur} by taking $\nu:=\delta_1$.
  \end{proof}
 
  \begin{remark}\label{Remark 3 bis}
  From the proof of Theorem \ref{Theorem 3}, one can obtain a more precise statement. Let $(\nk)_{k\ge 0}$ be a rigidity sequence, and choose a continuous measure $\nu\in\mathcal P(\T)$ such that $\widehat\nu(n_k)\to 1$. Recall that for any $p\in\Z$, we denote by $\nu_{\# p}$ the image of $\nu$ under the map $z\mapsto z^p$ (so that $\nu_0=\delta_1$ and $\nu_{\# p}\in \mathcal R^{(n_k)}\cap \mathcal P_c(\T)$ for all $p\neq 0$ by Lemma \ref{Fact 0 bis}).  Then $\nu_{\# n_k}\to \delta_1$ as $k\to\infty$, because $\widehat{\,\nu_{\# n_k}}(n)=\widehat\nu(n_kn)=\widehat{\nu_{\# n}}(n_k)\to 1$ for every $n\in\Z$. Applying Mazur's theorem as above, we conclude that for any $\varepsilon$, one can find a continuous measure $\mu$ which is a \emph{convex combination of the measures $\nu_{\# n_k}$} such that $\sup_{k\geq 0}\vert \widehat\mu(n_k)-1\vert<\varepsilon$. \end{remark}
  
 \begin{remark}\label{rem:egg} From Theorem \ref{Theorem 3} and results from \cite{EG,BDLR}, it becomes clear that if $n_{k+1}/n_{k}\to \infty$, or if $n_k$ divides $n_{k+1}$ for all $k$ then $\nkp{0}$ is a non-Kazhdan sequence. Indeed, it is shown for example in \cite[Example 3.4 and Proposition 3.9]{EG} that these assumptions imply rigidity. This can also be deduced from \cite[Theorem 2.3]{BaGrLyons}, using a classical result of Eggleston \cite{omelette} in the case where $n_{k+1}/n_k\to\infty$.  \end{remark}

\smallskip Now we show that rigidity sequences are also nullpotent. This will follow at once from the next result.
\begin{proposition}\label{Proposition 9}
 Let $\nkp{0}$ be a sequence of integers. Assume that for every integer $q\geq 1$, there exists a measure $\mu_q\in\rnk$ such that $\mu_q\bigl(\{ z\in\T;\; z^q=1\}\bigr)<1$.  
Then $(n_k)$ is a nullpotent sequence.
\end{proposition}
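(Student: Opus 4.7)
The plan is to apply the Protasov--Zelenyuk/Ruzsa characterization of nullpotent sequences recalled just after Definition \ref{Definition 8}, and argue by contradiction. Suppose that $(n_k)$ is \emph{not} nullpotent. Then there exist an integer $r\ge 1$, a nonzero integer $n$, and a sequence of representations
\[
n \;=\; \sum_{i=1}^{r}\varepsilon_i^{(m)}\,n_{k_i^{(m)}},\qquad m\ge 0,
\]
with each $\varepsilon_i^{(m)}\in\{-1,+1\}$ and $\min_{1\le i\le r}k_i^{(m)}\to\infty$ as $m\to\infty$.

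Now feed the hypothesis with $q:=|n|$ to obtain a measure $\mu:=\mu_q\in\rnk$ such that $\mu\bigl(\{z\in\T:\;z^q=1\}\bigr)<1$. Since $\widehat\mu(n_k)\to 1$, iterating Fact \ref{Fact 00}(2a) $r-1$ times (equivalently, bounding $|z^{\sum\varepsilon_i n_{k_i}}-1|\le\sum_i|z^{n_{k_i}}-1|$ for $z\in\T$ and applying Fact \ref{Fact 0}) yields that for any fixed $r$ and any choice of signs, provided every index $k_i$ tends to infinity,
\[
\widehat\mu\!\left(\sum_{i=1}^{r}\varepsilon_i\,n_{k_i}\right)\longrightarrow 1.
\]
Applied along the sequence of representations above, this gives $\widehat\mu(n)=\lim_{m\to\infty}\widehat\mu\!\bigl(\sum_i\varepsilon_i^{(m)}n_{k_i^{(m)}}\bigr)=1$.

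Finally, since $|z^n|=1$ on $\T$ and $\mu$ is a probability measure, the equality $\int_\T z^n\,d\mu(z)=1$ forces $z^n=1$ for $\mu$-almost every $z$; in other words, $\mu$ is concentrated on $\{z\in\T:\,z^n=1\}=\{z\in\T:\,z^q=1\}$ (using $n\ne 0$ and $q=|n|$). This contradicts the defining property of $\mu_q$, so $(n_k)$ must be nullpotent. There is no real obstacle in this argument; the only point requiring a moment's care is the uniform decay of $|\widehat\mu(\sum\varepsilon_i n_{k_i})-1|$, which is a routine iteration of Fact \ref{Fact 00} once $r$ is fixed.
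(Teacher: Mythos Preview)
Your proof is correct, but it takes a genuinely different route from the paper's. The paper argues \emph{directly}: it defines the translation-invariant metric
\[
d(m,n):=\sum_{q=1}^\infty 2^{-q}\int_\T |z^m-z^n|\,d\mu_q(z),
\]
checks that $d$ separates points precisely because of the hypothesis on the $\mu_q$ (if $d(m,n)=0$ then $z^{m-n}=1$ $\mu_q$-a.e.\ for all $q$, so $m=n$), and observes that $d(n_k,0)\to 0$ since each $\mu_q\in\rnk$. This gives an explicit metrizable Hausdorff group topology witnessing nullpotency, with no appeal to any external characterization.

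Your argument instead invokes the Ruzsa--Protasov--Zelenyuk criterion quoted after Definition~\ref{Definition 8} and derives a contradiction from $\widehat{\mu_q}(n)=1$. This is perfectly sound and makes the role of the single measure $\mu_{|n|}$ very transparent, but it outsources the real work to that nontrivial characterization. The paper's proof is shorter, self-contained, and constructive; yours perhaps illuminates more clearly why the condition ``$\mu_q$ not concentrated on the $q$-th roots of unity'' is exactly the right one for each $q$.
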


\begin{proof}
 Define a distance on $\Z$ by setting
 \[
d(m,n):=\sum_{q=1}^\infty 2^{-q} \int_{\T}|z ^{m}-z ^{n}|\,d\mu_q (z )\quad \textrm{for every}\ m,n\in\Z.
\]
This is indeed a distance on $\Z$: if $d(m,n)=0$, then $z ^{m-n}=1$ $\sigma_q $-almost everywhere for every $q\geq 1$, hence $m=n$ by the choice of the measures $\mu_q$. This distance $d$ is translation-invariant, so it defines a group topology on $\Z$; and $d(\nk,0)=\sum\limits_{q=1}^\infty 2^{-q}\ds\int_{\T}|z ^{\nk}-1|\,d\mu_q (z )\rightarrow{0}$ as 
${k}\rightarrow{\infty}$ because $\mu_q\in\rnk$ for all $q\geq 1$.
\end{proof}

\smallskip
\begin{corollary}\label{rigid/nullpotent} Rigidity sequences are nullpotent.
\end{corollary}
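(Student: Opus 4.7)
The plan is essentially to check that the hypothesis of Proposition \ref{Proposition 9} is automatically satisfied for rigidity sequences, so the corollary is immediate.

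More precisely, suppose $(n_k)_{k\ge 0}$ is a rigidity sequence. By Theorem \ref{Th 2}, there exists a continuous probability measure $\mu \in \mathcal{P}_c(\T)$ with $\widehat{\mu}(n_k) \to 1$, that is, $\mu \in \mathcal{R}^{(n_k)}\cap\mathcal{P}_c(\T)$. For each integer $q\ge 1$, the set $\{z\in\T : z^q=1\}$ is just the (finite) set of $q$-th roots of unity, so since $\mu$ is atomless, $\mu(\{z\in\T : z^q=1\})=0<1$. Therefore, one may simply take $\mu_q := \mu$ for every $q\ge 1$, and all the hypotheses of Proposition \ref{Proposition 9} are met. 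The conclusion then gives that $(n_k)$ is nullpotent.

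There is no real obstacle here: the corollary is a trivial consequence of Proposition \ref{Proposition 9} combined with the Fourier characterization of rigidity (Theorem \ref{Th 2}). The only thing worth emphasizing is that continuity of $\mu$ is strictly stronger than what Proposition \ref{Proposition 9} requires — one only needs that $\mu$ gives mass less than $1$ to each finite set of roots of unity — which suggests (as the next sections of the paper will presumably exploit) that the class of nullpotent sequences is genuinely wider than the class of rigidity sequences.
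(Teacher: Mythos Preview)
Your argument is correct and is exactly the intended one: the paper states the corollary without proof because it follows immediately from Proposition \ref{Proposition 9} by taking $\mu_q=\mu$ for a single continuous $\mu\in\mathcal{R}^{(n_k)}$ supplied by Theorem \ref{Th 2}. Your closing remark about continuity being stronger than needed is also apt and anticipates Theorem \ref{Theorem 10}.
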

The converse implication is false: there exist sequences $(n_k)$ which tend to zero in a suitable Hausdorff group topology $\tau$ of $\Z$ but which are not rigidity sequences. This will be proved in Theorem \ref{Theorem 10}. One may ask what additional properties the topology $\tau$ should satisfy in order to force the rigidity of the sequence. The following definition has been introduced in \cite{DD}. Recall that a Hausdorff topological group $(G,+)$ is said to be 
\emph{precompact} if, for every non-empty open set $U$ of $G$, there exists a finite set $F\subseteq G$ such that $G=U+F$.

\begin{definition}
Let $\nkp{0}$ be a sequence of integers. We say that $(n_k)$ is a \emph{totally bounded} sequence, or a \emph{$TB$-sequence} for short, if there exists a {precompact} group topology $\tau $ on $\Z$ 
such that ${\nk}\rightarrow{0}$  for $\tau $ as ${k}\rightarrow{\infty}$.
\end{definition}

\begin{theorem}\label{TBrigid}
$TB$-sequences are rigidity sequences.
\end{theorem}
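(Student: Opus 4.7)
My plan is to apply the Baire category theorem in the compact metrizable space $\mpt$, producing a continuous measure in $\rnk$, which is what we need by Theorem~\ref{Th 2}. The setup goes through Pontryagin duality: a precompact Hausdorff group topology on $\Z$ corresponds to a point-separating subgroup $H$ of the dual $\widehat\Z\cong\T$, the topology itself being that of pointwise convergence on $H$. Thus $n_k\to 0$ in the given topology if and only if $z^{n_k}\to 1$ for every $z\in H$. Since $H$ separates the points of $\Z$, it must be infinite (otherwise all its elements would share a common period $N$, contradicting separation at $N$); and since closed subgroups of $\T$ are either finite or all of $\T$, $H$ is dense in $\T$. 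For each $z\in H$, the Dirac mass $\delta_z$ belongs to $\rnk$ because $\widehat{\delta_z}(n_k)=z^{n_k}\to 1$. Hence $G^{(n_k)}\supseteq H$ is dense in $\T$, and Corollary~\ref{densitybis} gives that $\rnk$ is dense in $\mpt$.

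Next I would verify that $\rnk$ is a $G_\delta$ subset of $\mpt$. For each $K\geq 0$ the function $g_K\colon\mu\mapsto\sup_{k\geq K}|\widehat\mu(n_k)-1|$ is lower semicontinuous as a supremum of continuous functions. Since $(g_K)$ is decreasing in $K$, the function $\phi(\mu):=\limsup_k|\widehat\mu(n_k)-1|=\inf_K g_K(\mu)$ is upper semicontinuous: $\{\phi\geq c\}=\bigcap_K\{g_K\geq c\}$ is an intersection of closed sets. Since $\phi\geq 0$, we get $\rnk=\{\phi=0\}=\bigcap_n\{\phi<1/n\}$, a $G_\delta$. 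On the other hand $\mathcal P_c(\T)$ is a dense $G_\delta$ in $\mpt$, its complement being the $F_\sigma$-set $\bigcup_n\{\mu:\exists z\in\T,\ \mu(\{z\})\geq 1/n\}$; closedness of each piece follows from Portmanteau, because if $\mu_j\to\mu$ weakly and $z_j\to z$ with $\mu_j(\{z_j\})\geq 1/n$, then $\mu(\overline{B(z,\varepsilon)})\geq 1/n$ for every $\varepsilon>0$, whence $\mu(\{z\})\geq 1/n$.

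The Baire category theorem then ensures that $\rnk\cap\mathcal P_c(\T)$ is a nonempty (indeed dense) $G_\delta$ in $\mpt$, so $(n_k)$ admits a continuous rigid measure and is therefore a rigidity sequence by Theorem~\ref{Th 2}. The main delicate step is the $G_\delta$ character of $\rnk$: the naive expansion $\rnk=\bigcap_m\bigcup_K\bigcap_{k\geq K}\{\mu:|\widehat\mu(n_k)-1|<1/m\}$ only gives a $G_{\delta\sigma\delta}$ set, and it is precisely the monotonicity $g_{K+1}\leq g_K$ that exhibits the limsup functional as upper semicontinuous and thus reduces the complexity to $G_\delta$.
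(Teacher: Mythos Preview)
Your argument has a genuine gap at the crucial step: the claim that $\phi(\mu)=\limsup_k|\widehat\mu(n_k)-1|$ is upper semicontinuous is false. You correctly note that each $g_K=\sup_{k\geq K}|\widehat\mu(n_k)-1|$ is \emph{lower} semicontinuous as a supremum of continuous functions, and that $\{\phi\geq c\}=\bigcap_K\{g_K\geq c\}$. But lower semicontinuity of $g_K$ means that $\{g_K\leq c\}$ is closed, not $\{g_K\geq c\}$; you have the direction of the sublevel-set reversed. An infimum of lower semicontinuous functions has no reason to be upper semicontinuous, and here it is not.

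In fact, the conclusion you aim for is impossible: as noted in Remark~\ref{coupagedecheveux}, the set $\rnk$ is always \emph{meager} in $\mathcal P(\T)$ whenever the $n_k$ are pairwise distinct (it sits inside the countable union of the closed nowhere-dense sets $\mathcal F_K=\{\mu:\forall k\geq K,\ |\widehat\mu(n_k)-1|\leq 1/2\}$). Since you have already shown that $\rnk$ is dense, it cannot be $G_\delta$ in the Baire space $\mathcal P(\T)$, for a dense $G_\delta$ is comeager. So your ``naive expansion'' giving only $G_{\delta\sigma\delta}$ is essentially sharp, and the monotonicity of $(g_K)$ does not rescue the complexity.

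The paper circumvents this obstacle in two ways. For Theorem~\ref{TBrigid} itself, it simply quotes \cite[Proposition~2.4]{DD} and \cite[Theorem~2.3]{BaGrLyons}. For a self-contained argument (the proof of Theorem~\ref{Theorem 5}), it does use Baire category, but not in $\mathcal P(\T)$ directly: it embeds $\mathcal P(\T)$ affinely and homeomorphically into $(\ell_\infty,w^{**})$ via a map $J$ designed so that $J(\rnk)=J(\mathcal P(\T))\cap c_0$ is a \emph{norm-closed convex} set $C$, while $\mathcal P_c(\T)$ maps to a countable intersection of convex $w^{**}$-open sets $O_n$. One then combines Mazur's theorem (to pass from $w^{**}$-density to norm-density on convex sets) with Baire category in the norm topology on $C$. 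The point is that convexity plus Mazur, rather than a $G_\delta$ structure on $\rnk$, is what makes the Baire argument go through.
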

\begin{proof}
Let $\nkp{0}$ be a sequence of integers. It is proved in \cite[Proposition 2.4]{DD} that $(n_k)$ is a $TB$-sequence if and only if the subgroup 
$$\Gamma^{(n_k)}:=\{z \in\T\,;\, {z ^{\nk}}\rightarrow{1}\}$$ 
is infinite. Moreover, it is proved in \cite[Theorem 2.3]{BaGrLyons} that if $\Gamma^{(n_k)}$ is dense in $\T$, then $(n_k)$ is a rigidity sequence. Since a subgroup of $\T$ is dense if and only if it is infinite, the result follows immediately.
\end{proof}
The converse of Theorem~\ref{TBrigid} is false: as mentioned in the introduction Fayad and Kanigow\-ski constructed in \cite{FK} examples of rigidity sequences $\nkp{0}$ for which $\Gamma^{(n_k)}=\{ 1\}$. This will be discussed again in Subsection~\ref{ss:4a}, after Corollary~\ref{points}.

\begin{remark}
It follows from Remark~\ref{rem:egg} that sequences $\nkp{0}$ with $n_{k+1}/n_k\to\infty$ are $TB$-sequences; see also \cite[Theorem 3.1]{DD}. 
\end{remark}

\begin{remark} According to a well established terminology, the subgroup $\Gamma^{(n_k)}$ considered in the proof of Theorem \ref{TBrigid} is called the subgroup of $\T$ \emph{characterized by the sequence $(n_k)$}. There is a considerable literature on characterized subgroups; see \cite{dikran}.
\end{remark}

\subsection{Non-Kazhdan does not imply nullpotent} The following example shows that non-Kazhdan sequences may fail to be nullpotent.

\begin{proposition}\label{Proposition 4}
 Let $(m_{k})_{k\ge 0}$ be a sequence of integers with $m_{0}=1$, 
 and assume that $(m_{k})$ is non-\ka. Write the set $\{m_{k},m_{k}+1\,;\,k\ge 0\}$ as a strictly increasing sequence $\nkp{0}$. Then, $(n_k)$ is a non-\ka\ sequence  which is not nullpotent (and hence not a rigidity sequence).
 \end{proposition}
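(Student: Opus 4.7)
The plan is to establish the two claims—non-Kazhdan and non-nullpotence—separately, and then deduce non-rigidity from Corollary~\ref{rigid/nullpotent}. For the Kazhdan aspect, I would transfer a witness measure from $(m_k)$ to $(n_k)$ via the Fourier-coefficient characterization of Theorem~\ref{Th 1}, combining it with the subadditivity estimate of Fact~\ref{Fact 0}~(2) and the crucial assumption $m_0=1$. For non-nullpotence, the key observation is that both $m_k$ and $m_k+1$ lie in the enumerated set $\{n_k\}$, which will force the constant sequence $1$ to converge to $0$ in any candidate Hausdorff group topology.

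For the first part, fix $\eta>0$ and set $\delta:=\eta^2/8$. Since $(m_k)$ is non-Kazhdan, Theorem~\ref{Th 1} produces a measure $\mu\in\mathcal P(\T)$ with $\mu(\{1\})=0$ and $\sup_{k\ge 0}|\widehat{\mu}(m_k)-1|<\delta$. Because $m_0=1$, this already gives $|\widehat{\mu}(1)-1|<\delta$, so Fact~\ref{Fact 0}~(2) applied with $m=m_k$, $n=1$ yields
\[
|\widehat{\mu}(m_k+1)-1|\le\sqrt{2}\bigl(|\widehat{\mu}(m_k)-1|^{1/2}+|\widehat{\mu}(1)-1|^{1/2}\bigr)<2\sqrt{2\delta}=\eta
\]
for every $k$. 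Hence $\sup_{k\ge 0}|\widehat{\mu}(n_k)-1|<\eta$, and Theorem~\ref{Th 1} shows $(n_k)$ is non-Kazhdan.

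For the second part, I would first replace $(m_k)$ by a sub-enumeration with pairwise distinct values; this is harmless because the non-Kazhdan property passes to subsets (any witness measure for the larger set works for the subset), and $m_0=1$ can be retained as the first term. Now suppose, for contradiction, that $n_k\to 0$ in some Hausdorff group topology $\tau$ on $\Z$. Since $(n_k)$ is strictly increasing, hence injective, we can define $\varphi,\psi:\N\to\N$ by $n_{\varphi(k)}=m_k$ and $n_{\psi(k)}=m_k+1$; both maps are injective because the $m_k$'s (hence also the $m_k+1$'s) are distinct, so $\varphi(k),\psi(k)\to\infty$. Consequently $m_k=n_{\varphi(k)}\to 0$ and $m_k+1=n_{\psi(k)}\to 0$ in $\tau$, and subtracting yields that the constant sequence $1=(m_k+1)-m_k$ tends to $0$ in $\tau$, contradicting the Hausdorff property. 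Therefore $(n_k)$ is not nullpotent, and so not a rigidity sequence by Corollary~\ref{rigid/nullpotent}. The only mildly delicate point is the distinctness reduction; the rest is a clean subtraction in a topological group.
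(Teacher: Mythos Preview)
Your proof is correct and follows essentially the same approach as the paper. For non-Kazhdan, your argument is exactly the content of the paper's Fact~\ref{truc2} (with $a=m_0=1$), just unpacked directly. For non-nullpotence, the paper observes that $n_{j+1}-n_j=1$ for infinitely many $j$ and invokes the general Fact~\ref{truc1} (nullpotent sequences with distinct terms satisfy $|n_{j+1}-n_j|\to\infty$), whereas you pull out $m_k$ and $m_k+1$ as subsequences of $(n_j)$ and subtract; both arguments reduce to the same observation that the constant $1$ would have to tend to $0$ in the candidate group topology. One minor remark: your ``distinctness reduction'' is simpler than you make it sound---removing repetitions from $(m_k)$ does not change the \emph{set} $\{m_k\}$, hence does not change $\{m_k,m_k+1\}$ or $(n_k)$ at all, so there is nothing to check about the non-Kazhdan property being preserved.
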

\begin{proof} This will follow at once from the next two facts.

\begin{fact}\label{truc1} Let $(n_k)_{k\geq 0}$ be a sequence of integers. If $(n_k)$ is nullpotent then, for any $c_0,\dots ,c_p\in\Z$, the only accumulation points of 
$\vert c_0n_k+c_1n_{k+1}+\cdots +c_p n_{k+p}\vert$ in $\Z_+\cup\{\infty\}$ are $0$ and $\infty$; in other words, if the set $\mathbf K:=\{ k\geq 0;\;  c_0n_k+c_1n_{k+1}+\cdots +c_p n_{k+p}\neq 0\}$ is infinite, then $\vert c_0n_k+c_1n_{k+1}+\cdots +c_p n_{k+p}\vert\to\infty$ as $k\to\infty$, $k\in\mathbf K$. In particular, if $(n_k)$ is nullpotent and if the $n_k$ are all distinct, then  $\vert n_{k+1}-n_k\vert\to\infty$.
\end{fact}
\begin{proof}[Proof of Fact \ref{truc1}] Assume that the conclusion fails. Then there exists an integer $q\neq 0$ such that  $c_0 n_k+ c_1n_{k+1}+\cdots +c_p n_{k+p}=q$ for infinitely many $k$; and this immediately implies that $(n_k)$ cannot be nullpotent.
\end{proof}

\begin{fact}\label{truc2} Let $Q\subseteq\Z$
, and assume that $Q$ is not a Kazhdan set. Then the set $Q\cup (Q+Q)$ is non-Kazhdan. In particular, $Q\cup(Q+a)$ is non-Kazhdan for any $a\in Q$.
\end{fact}
\begin{proof}[Proof of Fact \ref{truc2}] Let $\varepsilon >0$ be arbitrary. Since $Q$ is 
non-Kazhdan, one can find a 
measure $\mu\in\mathcal P(\T)$ such that $\mu(\{ 1\})=0$ and $\sup_{q\in Q} \vert\widehat\mu(q)-1\vert<\varepsilon$. Then, by (2) of Fact \ref{Fact 00}, we have $\sup_{r\in Q+Q} \vert\widehat\mu(r)-1\vert< 2\sqrt{2}\varepsilon $. This being true for every $\varepsilon >0$, it follows that $Q\cup (Q+Q)$ is non-Kazhdan.
\end{proof}

Since $m_0=1$, if we set $Q:=\{ m_k;\; k\geq 0\}$, then $Q\cup (Q+1)$ is non-Kazhdan by Fact \ref{truc2}; so the sequence $(n_k)$ is non-Kazhdan. That $(n_k)$ cannot be a rigidity sequence follows immediately from Fact \ref{truc1}.
\end{proof}

\begin{remark} Fact \ref{truc1} shows in particular that if $\nkp{0}$ is a strictly increasing rigidity sequence, then $n_{k+1}-n_k\to\infty$. A more general result was proved in \cite[Proposition 2.12]{BDLR}. Moreover, the proof given in \cite[Example 6.9]{BaGrLyons} shows that if $(m_k)_{k\ge 0}$ is a strictly
 increasing sequence of positive integers with ${m_{k+1}-m_k}\rightarrow{\infty}$, then there exists a strictly increasing sequence $(n_{k})_{k\ge 0}$ which is a $TB$-sequence (hence a rigidity sequence by Theorem \ref{TBrigid}) and satisfies $m_k\le n_k < m_{k+1}$ for every $k\ge 0$. 
 \end{remark}

\begin{remark} In the situation described in Proposition \ref{Proposition 4}, it is not hard to see that the set
\[
\mathcal{R}_{\varepsilon }^{(\nk)}:=\Bigl\{\mu\in\mathcal{P}(\T)\,:\,\;\overline{\lim_{k\to\infty}}\;
|\much(\nk)-1|<\varepsilon\Bigr\}
\]
is not dense in $\mathcal{P}(\T)$ for $\varepsilon >0$ sufficiently small. 
Indeed, any measure $\mu \in\rnk_\varepsilon$ satisfies 
\[
|\much(1)-1|\le \overline{\lim_{k\to\infty}}\;
\sqrt{2}\bigl(|\much(m_{k}+1)-1|^{1/2}+|\much(m_{k})-1|^{1/2}\bigr)<2\sqrt{2}\varepsilon ,
\]
which makes density impossible for all $\varepsilon <1/(2\sqrt{2})$ (the Lebesgue measure on $\T$ cannot be approximated by measures from $\mathcal{R}_{\varepsilon }^{(\nk)}$). We will see in Section \ref{Section 3} that this is not accidental.
\end{remark}

\subsection{A Kazhdan sequence is almost surely nullpotent} We now show that the converse of Corollary~\ref{rigid/nullpotent} is not true: there exist nullpotent sequences which are not rigidity sequences. In fact, we are going to prove that there exist nullpotent sequences which are even Kazhdan. 
In order to do this, we need, as in \cite{R}, to consider \emph{random sequences} of integers. 

\smallskip Let $\mathbf p=(p _{n})_{n\ge 1}$ be a sequence of real numbers with $0<p_{n}<1$ for every $n\ge 1$. The random set $A=A^{\mathbf p}$ associated with $\mathbf p$ is defined by putting a given integer $n$ into $A$ with probability $p_n$ in such a way that the events $\{ n\in A\}$ are independent. More formally, let  $(\xi _{n})_{n\ge 1}$ be a sequence of independent random variables on  a probability space 
$(\Omega ,\mathcal{F},\P)$ with
\[
\P(\xi _{n}=1)=p_{n}\quad \textrm{and}\quad \P(\xi _{n}=0)=1-p_{n}\quad \textrm{for every}\ n\ge 1.
\]
Then, for any $\omega\in\Omega$, 
\[
n\in A(\omega )\iff \xi_n(\omega)=1.
\]
If the sequence $\mathbf p$ satisfies $\sum_{n=1}^{\infty}p_{n}=\infty$, then $A(\omega )$ is almost surely infinite by the Borel-Cantelli Lemma. So we may enumerate $A(\omega)$ as a strictly increasing sequence $(n_k(\omega))_{k\geq 0}$. Equivalently, $n_k(\omega)$ is the smallest $r\in\N$ such that $\xi_{1}(\omega)+ \cdots +\xi_{r}(\omega) = k$. We say that $(n_k)_{k\geq 0}$ is \emph{the random sequence associated with $\mathbf p$}. 

 \begin{theorem}\label{Theorem 10}
 Assume that the sequence $\mathbf p=(p_n)_{n\geq 1}$ satisfies the following three conditions:
  \begin{enumerate}
\item[\emph{(1)}] $\dfrac{1}{\log N}\,\ds\sum_{n=1}^{N}p_{n}\rightarrow{\infty}$ as 
${N}\rightarrow{\infty}$;
\item[\emph{(2)}] $\ds\sum_{n=1}^N \vert p_{n+1}-p_n\vert=\emph{o} \Bigl(\sum_{n=1}^{N}p_{n}\Bigr)$ as $N\to\infty$;
\item[\emph{(3)}] $p_{n}=\emph{O}\Bigl(\dfrac{1}{n^{1-\varepsilon }}\Bigr)$ as $n \rightarrow \infty$ for every $\varepsilon \in(0,1)$.
\end{enumerate}
Then, almost surely, the random sequence $(n_k)$ is nullpotent, Kazhdan (so not rigid), and such that every translate of the set $\{ n_k;\; k\geq 0\}$ is a set of recurrence. 
 \end{theorem}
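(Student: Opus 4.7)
The three almost-sure conclusions all flow from a single key probabilistic estimate: almost surely, for every $z\in\T\setminus\{1\}$,
\[ \frac{1}{K}\sum_{k=1}^{K}z^{n_k}\longrightarrow 0\quad\text{as}\ K\to\infty. \]
Set $K_N:=\sum_{n=1}^{N}\xi_n$ and $S_N(z):=\sum_{n=1}^{N}\xi_n\,z^n$. Bernstein's inequality together with condition (1) gives $K_N=(1+o(1))\sum_{n=1}^{N}p_n$ a.s., so it suffices to show $S_N(z)=o\bigl(\sum_{n=1}^{N}p_n\bigr)$ uniformly over $z$ in compact subsets of $\T\setminus\{1\}$. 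I split $S_N=R_N+D_N$ with $R_N(z):=\sum_{n=1}^{N}(\xi_n-p_n)\,z^n$ and $D_N(z):=\sum_{n=1}^{N}p_n\,z^n$. Abel summation, combined with $|\sum_{j=1}^{n}z^j|\le|1-z|^{-1}$, gives $|D_N(z)|\le|1-z|^{-1}\bigl(p_N+\sum_{n=1}^{N-1}|p_{n+1}-p_n|\bigr)$, which is $o\bigl(\sum_{n=1}^{N}p_n\bigr)$ by condition~(2). For the centered part, Bernstein/Hoeffding yields, for each fixed $z$,
\[ \P\!\Bigl(|R_N(z)|>\varepsilon\textstyle\sum_{n=1}^{N}p_n\Bigr)\le 4\exp\!\Bigl(-c\,\varepsilon^2\textstyle\sum_{n=1}^{N}p_n\Bigr), \]
which by condition~(1) is super-polynomially small in $N$. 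Since $z\mapsto S_N(z)$ is polynomially Lipschitz (constant $O(N^2)$), a polynomial net on $\T$ and a union bound promote this pointwise estimate to one uniform in $z$, and a standard interpolation along a slowly growing subsequence of indices (using the trivial bound $|S_{N+1}-S_N|\le 1$) delivers the claimed simultaneous equidistribution.

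From the key estimate, the Kazhdan property is immediate: applied with $z=e^{2\pi ih\theta}$ for every non-zero $h\in\Z$, it shows that $(n_k\theta)$ is uniformly distributed mod $1$ for every irrational $\theta$; since $\{n_k\}$ is a.s.\ generating (for any prime $p$, $\P(\{n_k\}\subseteq p\Z)=\prod_{p\nmid n}(1-p_n)=0$ thanks to condition~(1)), the equidistribution criterion recalled just after Theorem~\ref{Th 1} applies. For recurrence of every translate $\{n_k+a\}$, fix $a\in\Z$ and $\sigma\in\mpt$. Dominated convergence yields
\[ \frac{1}{K}\sum_{k=1}^{K}\wh\sigma(n_k+a)=\int_{\T}z^a\cdot\frac{1}{K}\sum_{k=1}^{K}z^{n_k}\,d\sigma(z)\ \xrightarrow[K\to\infty]{}\ \sigma(\{1\}), \]
because the inner Cesàro average tends pointwise to $\mathbf{1}_{\{z=1\}}$. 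Applied with $\sigma$ equal to the spectral measure of $\mathbf{1}_A$ in an ergodic measure-preserving system (for which $\sigma(\{1\})=m(A)^2>0$), this forces $m(A\cap T^{-(n_k+a)}A)\ge\tfrac12 m(A)^2$ for infinitely many $k$, hence for some $r=n_k+a\ne 0$; the general case follows by ergodic decomposition.

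For the nullpotency, I invoke the Ruzsa--Protasov--Zelenyuk criterion recalled in the introduction: it suffices to show that, almost surely, for every fixed $r\ge 1$, every choice of signs $\varepsilon_i\in\{\pm 1\}$, and every integer $n\ne 0$, the equation $n=\sum_{i=1}^{r}\varepsilon_i n_{k_i}$ has only finitely many solutions. By the first Borel--Cantelli lemma, this reduces to
\[ \sum_{\substack{(m_1,\ldots,m_r)\text{ pairwise distinct}\\ \sum_{i}\varepsilon_i m_i\,=\,n}}p_{m_1}\cdots p_{m_r}<\infty. \]
Using the bound $p_m\le Cm^{-1+\eta}$ given by condition~(3), the single linear constraint eliminates one summation variable and a routine estimation, with $\eta$ chosen so that $r\eta<1$, yields the required convergence.

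The main technical difficulty lies in the first step: one must marry Bernstein concentration (exploiting condition~(1)), the Abel-summation smoothing of the deterministic average (exploiting condition~(2)), and a Lipschitz-net argument in order to upgrade a pointwise-in-$z$ deviation estimate into one uniform in $z$, valid for every $z\in\T\setminus\{1\}$ simultaneously.
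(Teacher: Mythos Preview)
Your argument is correct and follows the same architecture as the paper's proof: both establish that the random sequence is almost surely Hartman equidistributed (your ``key probabilistic estimate''), and then deduce the Kazhdan and recurrence properties from this; and both treat nullpotency via Ruzsa's Borel--Cantelli argument using condition~(3). The difference is one of presentation rather than substance: the paper simply \emph{cites} the three ingredients (Ruzsa \cite{R} for nullpotency, Bourgain \cite{B} and its refinements for Hartman equidistribution, and the standard spectral argument for Kazhdan/recurrence recorded as Theorem~\ref{utile3}), whereas you sketch self-contained proofs of each. Your concentration-plus-net argument for equidistribution is essentially Bourgain's proof; your Borel--Cantelli bound for nullpotency is essentially Ruzsa's; and your spectral-measure computation for recurrence is the standard one reproduced in Theorem~\ref{utile3}.

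Two minor remarks. First, your separate verification that $\{n_k\}$ is almost surely generating is unnecessary: Hartman equidistribution at the rational points $z=e^{2\pi i/p}$ already forces $\{n_k\bmod p\}$ to be equidistributed in $\Z/p\Z$, so the sequence cannot lie in $p\Z$. The paper's route via Theorem~\ref{Th 1} (through the identity $\frac{1}{K+1}\sum_{k}\widehat\mu(n_k)\to\mu(\{1\})$) is slightly cleaner here. Second, in your nullpotency sketch you should allow the indices $k_i$ to coincide; after grouping equal indices the equation becomes $n=\sum_j c_j n_{k_j'}$ with distinct $k_j'$ and $\sum|c_j|\le r$, and the same estimate applies to each of the finitely many resulting patterns. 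This is exactly how Ruzsa organizes the computation.
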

 
 \smallskip
 \begin{remark} Condition (2) holds true as soon as the sequence $(p_n)_{n\ge 1}$ is decreasing and $\sum_{n=1}^\infty p_n=\infty$ (which follows from (1)). To give a concrete example, the three assumptions on $\mathbf p$ are satisfied if we take $p_n:=\frac{\log(n)}{n}$ for $n\geq 2$. 
 \end{remark}

\smallskip  

The proof of Theorems \ref{Theorem 10} will be merely a combination of known results. The first one was proved by Ruzsa in \cite[Theorem 4]{R}.

\begin{theorem}\label{utile1} If $\mathbf p$ satisfies \emph{(3)} and $\sum_{n=1}^\infty p_n=\infty$, then the random set $A$ is almost surely nullpotent.
\end{theorem}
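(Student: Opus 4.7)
The plan is to invoke the Protasov--Zelenyuk--Ruzsa characterization of nullpotent sequences recalled in the introduction: $(n_k)$ is nullpotent if and only if, for every fixed $r \geq 1$ and every $n \in \Z \setminus \{0\}$, there exists $N = N(r, n)$ such that the equation $n = \sum_{i=1}^r \varepsilon_i a_i$ admits no solution with $\varepsilon_i \in \{\pm 1\}$ and $a_1, \ldots, a_r \in A \cap [N, \infty)$. Taking a countable intersection over the pairs $(r, n)$ and using that the event
\[
E_{r, n, N} := \Bigl\{\exists\, (a_1, \ldots, a_r) \in A^r :\ \min_i a_i \geq N,\ \textstyle\sum_i \varepsilon_i a_i = n\ \text{for some signs}\Bigr\}
\]
is monotone decreasing in $N$, it will suffice to prove, for each fixed $r$ and $n \neq 0$, that $\P(E_{r, n, N}) \to 0$ as $N \to \infty$.

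By the union bound over the $2^r$ sign choices together with Markov's inequality and the independence of the $\xi_n$,
\[
\P(E_{r, n, N}) \;\leq\; \sum_{\varepsilon \in \{\pm 1\}^r}\ \sum_{\substack{a_1, \ldots, a_r \geq N \\ \sum_i \varepsilon_i a_i = n}} \prod_{i=1}^r p_{a_i},
\]
where the product majorizes $\P(\forall i: a_i \in A)$ even when some $a_i$ coincide (since $\xi_n^2 = \xi_n$). The heart of the argument is then to show that each inner sum vanishes as $N \to \infty$. Fix $\varepsilon$, use condition (3) to select a small $\delta > 0$ (with $r\delta < 1$) and a constant $C$ such that $p_a \leq C a^{-(1-\delta)}$ for all $a$. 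Parametrizing the linear constraint by the $r - 1$ free variables $(a_1, \ldots, a_{r-1})$ (the last coordinate being determined), the sum is of the order of
\[
\int_N^{\infty}\!\!\cdots\!\int_N^{\infty} \Bigl(\prod_{i=1}^{r-1} a_i^{-(1-\delta)}\Bigr)\, |a_r(a_1, \ldots, a_{r-1})|^{-(1-\delta)}\, \mathbf{1}_{\{|a_r| \geq N\}}\, da_1 \cdots da_{r-1},
\]
and the rescaling $a_i = N u_i$, combined with the observation that $|a_r|$ is of the same order as $\max_i a_i$ on the relevant region (since $|n|$ is fixed and all $a_i \geq N$), will yield a bound of the form $C_{r, n}\, N^{-1 + r\delta}$, which tends to $0$.

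The main obstacle is the mixed-sign case, in which cancellations among the $\varepsilon_i a_i$ may make the determined coordinate $|a_r|$ much smaller than $\max_i a_i$; here the naive estimate fails because $\sum_{a \geq N} a^{-(1-\delta)}$ diverges. The cleanest route will be a dyadic decomposition by $M := \max_i a_i$, combined with an induction on $r$: for each dyadic range $M \in [2^j, 2^{j+1}]$, one shows that the expected number of tuples with maximal coordinate $M$ decays like $2^{-\alpha j}$ for some $\alpha > 0$, leveraging (i) the factor $p_M \leq C M^{-(1-\delta)}$ coming from the maximum, (ii) the reduction of one degree of freedom via the linear constraint, and (iii) the elementary bound $\sum_{a \leq M} a^{-(1-\delta)} = O(M^\delta)$ for each of the remaining coordinates. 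Summing geometrically over $j \geq \log_2 N$ then produces the required decay. The strength of condition (3) --- that $p_n$ decays only slightly faster than $1/n$, with the exponent $\delta$ allowed to be arbitrarily small --- is precisely what makes these exponents work out for every fixed $r$.
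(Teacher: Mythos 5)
Your argument cannot be checked against a proof in the paper, because the paper gives none: the statement is quoted as Ruzsa's theorem (Theorem 4 of reference [R]) and used as a black box. Judged on its own, your overall scheme is the right one --- the Protasov--Zelenyuk--Ruzsa criterion, monotonicity of $E_{r,n,N}$ in $N$, a countable intersection over the pairs $(r,n)$, and a first-moment/union bound --- and the final estimate can indeed be made to work with any $\delta<1/r$. (Note in passing that $\sum_n p_n=\infty$ enters only to make $A$ almost surely infinite, and that the all-plus and all-minus sign patterns are vacuous once $N>|n|$.)

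Two concrete problems remain. First, your parenthetical claim that $\prod_i p_{a_i}$ majorizes $\P(\forall i\colon a_i\in A)$ ``even when some $a_i$ coincide'' is backwards: if a value $a$ is repeated, the probability of the event is the product of $p_a$ over the \emph{distinct} values, which is larger than $\prod_i p_{a_i}$ since $p_a\le 1$. Since the criterion you quote does not require distinct indices, tuples with repeated entries are not covered by your displayed bound; you must either cancel opposite-sign repetitions and rewrite equal-sign repetitions as representations $n=\sum_{a\in S}c_a a$ with distinct $a\ge N$ and integer coefficients $1\le\sum_a|c_a|\le r$ (for which the same estimate runs, with $r$-dependent constants), or otherwise justify restricting to distinct values. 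Second, the core mixed-sign estimate is only a plan, and it is ambiguous exactly at the decisive point: if, within a dyadic block for $M=\max_i a_i$, the coordinate eliminated by the linear constraint is \emph{not} the maximal one, then the maximum must be summed over its block, contributing $O(M^{\delta})$ rather than the pointwise factor $p_M\le CM^{-(1-\delta)}$ you invoke, and the block bound becomes $O(M^{(r-1)\delta})$, which grows. The fix is to let the determined coordinate be the maximal one (legitimate since its coefficient is $\pm1$): each block then contributes $O\bigl(2^{-j(1-\delta)}\bigr)\cdot O\bigl(2^{j\delta}\bigr)^{r-1}=O\bigl(2^{-j(1-r\delta)}\bigr)$, and summing over $j\gtrsim\log_2 N$ gives $O\bigl(N^{-(1-r\delta)}\bigr)\to0$; no induction on $r$ is needed. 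Alternatively, skip the dyadic decomposition: eliminating the maximal coordinate and using $\max_{i\ne i^*}a_i\ge\bigl(\prod_{i\ne i^*}a_i\bigr)^{1/(r-1)}$ bounds the inner sum by $rC\bigl(\sum_{a\ge N}Ca^{-(1-\delta)r/(r-1)}\bigr)^{r-1}$, which tends to $0$ because $(1-\delta)r/(r-1)>1$ when $\delta<1/r$. With these repairs your proof is complete and self-contained, which is more than the paper offers for this statement.
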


The second result we need goes back to Bourgain \cite{B}. Recall that a sequence $(n_k)_{k\geq 0}$ of elements of $\Z$ is said to be \emph{Hartman equidistributed} (see \cite[page 295]{KuiNie}) if 
\[ \frac1{K+1}\sum_{k=0}^K z^{n_k}\to 0\qquad\hbox{for every $z\in\T\setminus\{ 1\}$.}\]

\begin{theorem}\label{utile2} If $\mathbf p$ satisfies \emph{(1)} and \emph{(2)} then the random sequence $(n_k)$ is almost surely Hartman equidistributed.
\end{theorem}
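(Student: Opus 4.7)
The plan is to recast Hartman equidistribution in terms of the random polynomials $T_N(z):=\sum_{n=1}^N\xi_n z^n$. Setting $S_N:=\sum_{n=1}^N\xi_n$, the set $\{n_k\,;\,0\le k\le K\}$ coincides with $\{n\le n_K\,;\,\xi_n=1\}$, hence $\sum_{k=0}^K z^{n_k}=T_{n_K}(z)$ and $K+1=S_{n_K}$. So almost-sure Hartman equidistribution is equivalent to: almost surely, $T_N(z)/S_N\to 0$ for every $z\in\T\setminus\{1\}$ as $N\to\infty$. By Kolmogorov's strong law applied to the bounded independent variables $\xi_n$ and assumption $(1)$, $S_N/P_N\to 1$ almost surely, where $P_N:=\sum_{n=1}^N p_n\to\infty$; so it is enough to prove $T_N(z)/P_N\to 0$.

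Next I would split $T_N(z)=D_N(z)+R_N(z)$ into a deterministic and a random part, with $D_N(z):=\sum_{n=1}^N p_n z^n$ and $R_N(z):=\sum_{n=1}^N(\xi_n-p_n)z^n$. Abel summation on $D_N$, using the bound $\bigl|\sum_{n=1}^m z^n\bigr|\le 2/|1-z|$ for $z\neq 1$, yields
$$|D_N(z)|\le \frac{2}{|1-z|}\Bigl(p_N+\sum_{n=1}^{N-1}|p_{n+1}-p_n|\Bigr),$$
which is $o(P_N)$ by assumption $(2)$ together with $p_N\le 1=o(P_N)$ (a consequence of $(1)$). So $D_N(z)/P_N\to 0$ pointwise in $z\neq 1$, deterministically.

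For the random part, $R_N(z)$ is, for each fixed $z$, a sum of bounded mean-zero independent complex random variables with total variance $O(P_N)$; the Bernstein concentration inequality then gives sub-Gaussian tails $\P(|R_N(z)|>\lambda)\le C\exp(-c\lambda^2/P_N)$ in the regime $\lambda=O(P_N)$. Along the geometric subsequence $N_j:=2^j$, Borel--Cantelli yields $|R_{N_j}(z)|=O\bigl(\sqrt{P_{N_j}\log N_j}\bigr)=o(P_{N_j})$ almost surely, the last equality holding by $(1)$; a standard maximal inequality for sums of independent variables bridges from the subsequence to all $N$.

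The main obstacle is that these bounds must hold \emph{simultaneously} for every $z\in\T\setminus\{1\}$, while a naive equicontinuity argument fails since $T_N$ has degree $N$ and derivative of size up to $N^2$. The plan here is the familiar net-plus-Bernstein strategy from the theory of random trigonometric polynomials: fix $\delta>0$, take a $1/N_j^3$-net $\mathcal N_j$ of $\{z\in\T\,;\,|1-z|\ge\delta\}$, apply the above concentration bound with a union bound over $\mathcal N_j$ (which has polynomial size in $N_j$), and then use the classical Bernstein inequality for trigonometric polynomials $\|R_{N_j}'\|_\infty\le N_j\|R_{N_j}\|_\infty\le N_j^2$ to interpolate between net points. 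This produces an almost-sure bound $\sup_{|1-z|\ge\delta}|R_{N_j}(z)|=o(P_{N_j})$; taking a countable sequence $\delta\to 0$ and intersecting the resulting full-probability events yields the desired simultaneous conclusion for every $z\neq 1$. This covering step is the technical heart of Bourgain's argument in \cite{B}.
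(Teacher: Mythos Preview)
The paper does not give its own proof of this theorem; it simply cites Bourgain \cite[Proposition 8.2]{B}, Neuwirth \cite[Theorem 5.4]{N}, and Fan--Schneider \cite[Theorem 6.1]{China}. Your sketch goes further and correctly outlines Bourgain's argument, which you yourself identify at the end. The reduction to $T_N(z)/P_N\to 0$ via the strong law of large numbers (the Kolmogorov criterion $\sum p_n/P_n^2<\infty$ follows from $P_N\to\infty$), the Abel-summation bound $|D_N(z)|\le\frac{2}{|1-z|}\bigl(p_N+\sum_{n<N}|p_{n+1}-p_n|\bigr)$ for the deterministic part (this is exactly where hypothesis (2) enters), and the concentration-plus-net treatment of the random part $R_N$ are all correct in outline.

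One simplification worth noting: the geometric subsequence $N_j=2^j$ and the subsequent maximal-inequality bridging are not actually needed. Bernstein's inequality gives $\P(|R_N(z)|>\varepsilon P_N)\le C\exp(-c\varepsilon^2 P_N)$ for each fixed $z$, so after the union bound over a polynomial-size net of $\{z\in\T:|1-z|\ge\delta\}$ one gets a term of order $N^C\exp(-c\varepsilon^2 P_N)$; hypothesis (1), namely $P_N/\log N\to\infty$, is \emph{precisely} what makes this summable directly over all $N\ge 1$, with no passage through a subsequence. This also sidesteps the one genuinely delicate point in your sketch, the bridging step, where a crude bound $|R_N-R_{N_j}|\le N-N_j$ would be useless (since $N_j/P_{N_j}$ need not be small) and one would otherwise have to invoke a martingale Bernstein/Freedman inequality together with some control on $P_{2N}/P_N$.
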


This was proved by Bourgain in \cite[Proposition 8.2]{B} under slightly stronger assumptions. A more general result is obtained in \cite[Theorem 5.4]{N}, with a quite different proof from that in \cite{B}. As stated, the result can be found in \cite[Theorem 6.1]{China}. We mention also the references \cite{Bos83,LQR,FLWsz,Fr2012} for more results in this vein. 

\par\smallskip
The last result we need reads as follows.
\begin{theorem}\label{utile3} Let $(n_k)_{k\geq 0}$ be a sequence of integers. Assume that  $(n_k)$ is Hartman equidistributed.  Then the set $\{\nk\,;\,k\ge 0\}$ is \ka, and every translate of $\{ n_k;\; k\geq 0\}$ is a set of recurrence.
\end{theorem}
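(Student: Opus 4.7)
\textbf{Proof plan for Theorem \ref{utile3}.} The guiding principle is that Hartman equidistribution lets us turn averages $\frac{1}{K+1}\sum_{k=0}^K z^{n_k}$ into the indicator of $\{1\}$ after integration against any measure on $\T$, via bounded convergence. Both assertions will follow from one and the same Cesàro computation.

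For the Kazhdan part I would use Theorem \ref{Th 1} and show that the condition $(1)_\varepsilon$ holds with $\varepsilon=1/2$. Let $\mu\in\mpt$ be such that $\sup_{k\ge 0}|\much(n_k)-1|<1/2$; in particular $\Re e\,\much(n_k)>1/2$ for every $k$. Averaging,
\[
\frac{1}{K+1}\sum_{k=0}^K \much(n_k)\;=\;\int_\T\Bigl(\frac{1}{K+1}\sum_{k=0}^K z^{n_k}\Bigr)d\mu(z),
\]
and Hartman equidistribution together with bounded convergence show that this Cesàro mean tends to $\mu(\{1\})$. Taking real parts and passing to the limit yields $\mu(\{1\})\ge 1/2>0$, which is exactly $(1)_\varepsilon$.

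For the recurrence part, fix $r\in\Z$ and set $R:=\{n_k+r;\,k\ge 0\}$. Let $(X,\mathcal B,m;T)$ be a measure-preserving system, let $A\in\mathcal B$ with $m(A)>0$, and denote by $U=U_T$ the associated Koopman operator. Decompose $\mathbf 1_A=g_{\mathrm{inv}}+g^\perp$, where $g_{\mathrm{inv}}$ is the orthogonal projection of $\mathbf 1_A$ onto the $U$-invariant subspace of $L^2(m)$ and $g^\perp\perp\ker(U-I)$. Since the constant function $\mathbf 1$ is $U$-invariant, Cauchy--Schwarz gives $c:=\|g_{\mathrm{inv}}\|^2\ge m(A)^2>0$. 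Writing the spectral measure of $\mathbf 1_A$ as $\sigma_{\mathbf 1_A}=c\,\delta_1+\tau$, where $\tau$ is a finite positive measure on $\T$ with $\tau(\{1\})=0$, we obtain
\[
m(A\cap T^{-n}A)\;=\;\langle U^n\mathbf 1_A,\mathbf 1_A\rangle\;=\;c+\widehat\tau(n)\qquad\text{for every }n\in\Z.
\]

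Now I average along $R$:
\[
\frac{1}{K+1}\sum_{k=0}^K m(A\cap T^{-(n_k+r)}A)\;=\;c+\int_\T z^{r}\Bigl(\frac{1}{K+1}\sum_{k=0}^K z^{n_k}\Bigr)d\tau(z).
\]
Because $\tau(\{1\})=0$, Hartman equidistribution and bounded convergence force the integral above to tend to $0$; hence the Cesàro average tends to $c\ge m(A)^2>0$. Consequently $m(A\cap T^{-(n_k+r)}A)>0$ for infinitely many $k$, and since $n_k+r=0$ for at most one index $k$, there exists $n\in R\setminus\{0\}$ with $m(A\cap T^{-n}A)>0$. Thus $R$ is a set of recurrence. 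The mildly subtle step is the spectral decomposition yielding the lower bound $c\ge m(A)^2$ and a tail measure $\tau$ with no atom at $1$; once this is in place, the Hartman hypothesis finishes the job in one line.
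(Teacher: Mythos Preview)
Your proof is correct and follows essentially the same line as the paper's: both establish the key Ces\`aro identity $\frac{1}{K+1}\sum_{k=0}^K \widehat\mu(n_k)\to\mu(\{1\})$ for every positive measure $\mu$ via bounded convergence, deduce the Kazhdan property from Theorem~\ref{Th 1}, and obtain recurrence by applying this to the spectral measure of $\mathbf 1_A$ (the paper argues by contradiction and invokes von Neumann's theorem to identify $\mu(\{1\})$ with $\|g_{\mathrm{inv}}\|^2$, whereas you write the decomposition $\sigma_{\mathbf 1_A}=c\,\delta_1+\tau$ directly; and the paper handles translates by noting that $(n_k+m)$ is again Hartman equidistributed, while you absorb the factor $z^r$ into the integrand---both are equivalent). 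One tiny remark: your claim that $n_k+r=0$ for ``at most one index $k$'' presumes the $n_k$ are distinct, which is not assumed; but Hartman equidistribution forces each value to be attained with density zero, while the positive Ces\`aro limit gives a positive-density set of good indices, so the conclusion stands.
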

\begin{proof} The equidistribution assumption implies that 
\begin{equation}\label{Hartman} 
 \frac{1}{K+1} \sum_{k=0}^K \widehat\mu(n_k)\to \mu(\{ 1\})
 \end{equation}
 for every  finite positive measure $\mu$ on $\T$. 
So the first statement is a simple consequence of Theorem \ref{Th 1}. The second statement is well known (see \mbox{e.g.} \cite[Proposition 3.4]{Bos83} and the proof of \cite[Theorem 3.5]{Furst}), and can be proved by considering the spectral measures of the Koopman operator $f\mapsto f\circ T$ associated to a measure-preserving transformation 
$T:(X,\mathcal{B},m)\to (X,\mathcal{B},m)$. More precisely, given a set $A\subseteq X$ with $m(A)>0$, suppose that $m\bigl(T^{-n_k}(A)\cap A\bigr)=0$ for every $k\geq 0$. Let $\mu$ be the positive measure such that $\widehat\mu(n)=\langle \mathbf 1_A\circ T^n,\mathbf 1_A\rangle_{L^2(m)}$, $n\geq 0$.
An application of (\ref{Hartman}) to $\mu$ yields that $\mu(\{ 1\})=0$.
By von Neumann's mean ergodic theorem and since $\mu(\{1\})=\lim_{N\to\infty}\frac1{N+1} \sum_{n=0}^N \widehat\mu(n)$, it follows that $\mathbf 1_A$ is orthogonal to its projection onto the subspace of all $T$-invariant functions in $L^2(m)$. So $\mathbf 1_A$ is in fact orthogonal to all $T$-invariant functions; and hence $m(A)=\pss{\mathbf 1_A}{\mathbf 1}=0$, a contradiction. 
Thus, we see that $\{ n_k;\; k\geq 0\}$ is a set of recurrence; and the same is true for any translate of $\{ n_k;\; k\geq 0\}$ because the sequence $(n_k+m)$ is Hartman equidistributed for any $m\in\Z$.
\end{proof}

\begin{proof}[Proof of Theorem \ref{Theorem 10}] The result follows immediately from Theorems \ref{utile1}, \ref{utile2} and \ref{utile3} above.
\end{proof}

\subsection{Kazhdan sequences which are not asymptotic bases of $\Z$}
Although the most explicit and natural examples of nullpotent sequences enjoy the stronger property of being rigidity sequences (this is typically the case for sequences $\nkp{0}$ such that ${n_{k+1}/n_{k}}\rightarrow{\infty),}$ Theorem \ref{Theorem 10} shows that from a probabilistic point of view,  a sequence of integers is rather inclined to be nullpotent but not rigid. 
In the same way, most known examples of \ka\ sets in $\Z$ (such as the set of squares, or the set of all primes...) are additive bases of $\Z$, but a set of integers taken at random will rather be  \ka\ and {not} an asymptotic basis. As mentioned in the introduction, this answers a question of Martin Kassabov. However, we must also add that presently we do not know any explicit example of a Kazhdan set which is not an additive basis.

\begin{corollary}\label{cor:base} Under the assumptions of Theorem \ref{Theorem 10} above,
the random sequence $(n_k)$ almost surely satisfies the following property: 
$(n_k)$ is a \ka\ sequence and, for each fixed integer $r\ge 1$, the set $B_r$ of all integers $m$ which can be written as 
\[
m=\sum_{i=1}^{r}\varepsilon _{i}\,n_{k_{i}},\qquad \varepsilon _{i}=\pm 1
\]
is of density zero in $\Z$. In particular, $(n_k)$ is almost surely a Kazhdan sequence and not an asymptotic basis of $\Z$.
\end{corollary}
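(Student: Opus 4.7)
The plan is to deduce the corollary essentially by bundling together Theorem \ref{Theorem 10} and the characterization of nullpotent sequences via additive representations recalled in the introduction (Ruzsa's Theorem 2 of \cite{R}).

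First, I would observe that Theorem \ref{Theorem 10} already provides the hard part of the conclusion. Under the assumptions $(1)$--$(3)$ on $\mathbf p$, it guarantees that almost surely the random sequence $(n_k)$ is simultaneously \emph{nullpotent} and \emph{Kazhdan}. Fix a full-measure event $\Omega_0\subseteq\Omega$ on which both properties hold, and work with a sample $\omega\in\Omega_0$ from now on.

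The Kazhdan claim in the corollary is then immediate. For the statement about $B_r$, I would invoke the Ruzsa result quoted in Subsection~1.3: since $(n_k)$ is nullpotent, \cite[Theorem 2]{R} says that for every fixed $r\ge 1$, the set
\[
B_r=\Bigl\{m\in\Z\,;\, m=\sum_{i=1}^{r}\varepsilon_i\, n_{k_i},\ \varepsilon_i=\pm 1,\ k_i\ge 0\Bigr\}
\]
has density zero in $\Z$. (Alternatively, and in case one wanted to reprove this on the spot, one could argue directly from Fact \ref{truc1} and the characterization of nullpotent sequences given in the introduction: for each fixed $r$, any $m\in B_r$ with a representation whose largest index is at least $k$ contributes to the counting only through the vanishing frequency forced by the ``$0$ or $\infty$'' alternative of Fact \ref{truc1}. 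But the cleanest route is just to quote Ruzsa.)

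Finally, I would conclude by remarking that $(n_k)$ cannot be an asymptotic basis of $\Z$: if it were, by definition there would exist some $r\ge 1$ such that $B_r$ contains all but finitely many integers, and in particular $B_r$ would have density one, contradicting the density-zero conclusion just obtained. No step is really an obstacle here; the content of the corollary is entirely packed into Theorem \ref{Theorem 10} and the Ruzsa-type characterization of nullpotent sequences, and the only thing to do is to assemble them.
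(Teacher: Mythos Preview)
Your proposal is correct and follows essentially the same route as the paper: invoke Theorem \ref{Theorem 10} to get that $(n_k)$ is almost surely both Kazhdan and nullpotent, then quote \cite[Theorem 2]{R} to conclude that each $B_r$ has density zero, from which the failure to be an asymptotic basis is immediate. The parenthetical alternative via Fact \ref{truc1} is unnecessary (and would require more work to make precise), but your main line is exactly the paper's argument.
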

\begin{proof}
It follows from Theorem \ref{Theorem 10} that  the sequence $(n_k(\omega))_{k\ge0}$ is almost surely Kazhdan and nullpotent; and it is proved in \cite[Theorem 2]{R} that all the sets $B_r$ associated to a nullpotent sequence are of density zero in $\Z$.
\end{proof}
\begin{remark} The choice $p_n=\frac{\log(n)}{n}$ for $n\geq 2$ in Theorem \ref{Theorem 10} gives rise to random sequences $(n_k)$ which are almost surely nullpotent.  On the other hand, it has been remarked by Ruzsa \cite{R} that if we choose $p_n = n^{-d}$ for some $0<d<1$, then the random sequence $(n_k)$ is almost surely an asymptotic basis of $\Z$ and hence not nullpotent. 
Additive properties of random sequence of (positive) integers have been studied by many authors, beginning with Erd\"{o}s and Renyi \cite{ErdosRenyi}. 
\end{remark}

\begin{remark}
One can also wonder what happens from a \emph{topological} point of view. As it turns out, the situation is quite different: identifying a subset of $\N$ with a point of the Cantor space $\{ 0,1\}^\N$, 
it is very easy to show that the set of all additive bases of order $2$ of $\N$ is a dense $G_\delta$ subset in $\{ 0,1\}^\N$. For Baire category results concerning the equidistribution mod $1$ of subsequences, the reader may consult \cite{GSW} and the references therein. 
\end{remark}

\section{Characterizations of rigidity sequences}\label{Section 3}

\subsection{Two criteria for rigidity}\label{ss:4a} In this section, we present two criteria for rigidity, namely Theorems~\ref{Theorem 5} and~\ref{vrai?}, which provide a rather tractable way to check the rigidity of a sequence $(n_k)_{k\geq 0}$. The meaning of these results is that if we are able to construct sufficiently many probability measures $\mu $ with ${\much(n_{k})}\rightarrow{1}$, or just sufficiently many measures such that $\overline{\rm lim}_{k\to\infty}\, \vert \widehat\mu(n_k) -1\vert$ is small, then we get ``for free" that there exists a \emph{continuous} probability measure $\mu$ with ${\much(n_{k})}\rightarrow{1}$. And of course, it is presumably much easier to find possibly discrete measures $\mu$ such that $\overline{\rm lim}_{k\to\infty}\, \vert \widehat\mu(n_k) -1\vert$ is small than to construct directly a continuous rigid measure. For more examples of the usefulness of this line of thought, see \mbox{e.g.} \cite{ChoNad}, \cite{Nad} or \cite{Lenz}.

\smallskip
Recall the notation for the set of all rigid measures for a given sequence of integers $(n_k)_{k\geq 0}$:
\[ \mathcal{R}^{(n_k)}=\bigl\{\mu \in\mathcal{P}(\T)\, :\,{\much(n_{k})}\rightarrow{1} \textrm{ as } k\rightarrow\infty\bigr\} .\]

\begin{theorem}\label{Theorem 5}
 Let $\nkp{0}$ be a sequence of integers. Then, $(n_k)$ is a rigidity sequence if and only if $\rnk$ is dense in $\mathcal P(\T)$. Moreover, if this holds, then in fact $\rnk\cap\mathcal P_c(\T)$ is dense in $\mathcal P(\T)$.
\end{theorem}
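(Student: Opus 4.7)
The plan is to handle the two directions of the equivalence separately, with the ``only if'' direction (and the moreover clause) being immediate from Corollary~\ref{density}, and the ``if'' direction requiring a short case analysis.

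For ``only if'' together with the moreover part, suppose $(n_k)$ is a rigidity sequence and pick any $\mu\in\mathcal{R}^{(n_k)}\cap\mathcal{P}_c(\T)$. Since $\mu$ is atomless its support is uncountable, so it contains some irrational $z$, and hence $\overline{G(\mu)}\supseteq\overline{\langle z\rangle}=\T$. Corollary~\ref{density} then guarantees that $\mathcal{R}^{(n_k)}\cap L^1(\mu)$ is dense in $\mathcal{P}(\T)$; and because $\mu$ is continuous, $L^1(\mu)\subseteq\mathcal{P}_c(\T)$, so $\mathcal{R}^{(n_k)}\cap\mathcal{P}_c(\T)$ is already dense in $\mathcal{P}(\T)$.

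For ``if'', assume $\mathcal{R}^{(n_k)}$ is dense in $\mathcal{P}(\T)$ and argue by a dichotomy. If some $\mu\in\mathcal{R}^{(n_k)}$ admits a Lebesgue decomposition $\mu=\mu_c+\mu_d$ with $\mu_c\neq 0$, then the probability measure $\mu_c/\mu_c(\T)$ is continuous and absolutely continuous with respect to $\mu$, so Lemma~\ref{Fact 0 bis}(3) places it in $\mathcal{R}^{(n_k)}\cap\mathcal{P}_c(\T)$ and $(n_k)$ is a rigidity sequence. Otherwise every $\mu\in\mathcal{R}^{(n_k)}$ is purely atomic; writing $\mu=\sum_i a_i\delta_{z_i}$ with $a_i>0$, a dominated-convergence argument on the summable series $\sum_i a_i|1-z_i^{n_k}|$ shows that $\widehat\mu(n_k)\to 1$ if and only if $z_i^{n_k}\to 1$ for every $i$, i.e.\ every atom of $\mu$ lies in the subgroup $\Gamma^{(n_k)}=\{z\in\T:z^{n_k}\to 1\}$. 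Consequently $\mathcal{R}^{(n_k)}\subseteq\{\nu\in\mathcal{P}(\T):\nu(\overline{\Gamma^{(n_k)}})=1\}$, a closed subset of $\mathcal{P}(\T)$. Density of $\mathcal{R}^{(n_k)}$ thus forces $\overline{\Gamma^{(n_k)}}=\T$, so $\Gamma^{(n_k)}$ is an infinite subgroup of $\T$, hence dense, and Theorem~\ref{TBrigid} supplies the desired rigidity; the moreover clause then follows at once from the first paragraph.

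The only non-routine step is the purely atomic subcase: density of $\mathcal{R}^{(n_k)}$ has to be converted into density of the characterized subgroup $\Gamma^{(n_k)}$, after which the $TB$-sequence machinery of Theorem~\ref{TBrigid} produces a continuous rigid measure. The remaining ingredients are the hereditarity of $\mathcal{R}^{(n_k)}$ under absolute continuity and a routine Fourier computation for purely atomic measures.
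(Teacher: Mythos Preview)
Your proof is correct, and in fact the paper explicitly sketches exactly this argument (reduce to the purely discrete case by hereditarity, observe that atoms of rigid measures lie in $\Gamma^{(n_k)}$, conclude that $\Gamma^{(n_k)}$ is dense, and invoke \cite[Theorem~2.3]{BaGrLyons} --- which is the content of Theorem~\ref{TBrigid}). However, the paper then deliberately sets this argument aside and gives a different, self-contained proof: an abstract Baire category lemma (Lemma~\ref{Lemma 6}) applied after embedding $\mathcal P(\T)$ affinely into $\ell_\infty(\N)=c_0(\N)^{**}$ via $\mu\mapsto(\widehat\mu(n_k)-1)_k$, so that $\mathcal R^{(n_k)}$ becomes the trace of the image on $c_0(\N)$ and $\mathcal P_c(\T)$ is written as a countable intersection of \emph{convex} open sets.

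The reason for the paper's choice is that Theorem~\ref{Theorem 5} formally generalizes \cite[Theorem~2.3]{BaGrLyons}; your route, via Theorem~\ref{TBrigid}, ultimately rests on that external result, so the implication ``$\mathcal R^{(n_k)}$ dense $\Rightarrow$ rigid'' is not proved from scratch. The Baire-category proof, by contrast, yields \cite[Theorem~2.3]{BaGrLyons} as a corollary rather than an input, and moreover gives the sharper conclusion recorded in Remark~\ref{coupagedecheveux} (density of $\mathcal R^{(n_k)}\cap\mathcal G$ for any $G_\delta$ set $\mathcal G$ that is a countable intersection of dense convex open sets). What your argument buys is brevity and transparency: the dichotomy is short, and the passage from ``rigid measures are discrete'' to ``$\Gamma^{(n_k)}$ is dense'' is completely elementary.
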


We point out two consequences of this theorem (see also Corollary~\ref{cor:49} below).  Recall that for any $\mu\in\mathcal P(\T)$, we denote by $G(\mu)$ the subgroup of $\T$ generated by the support of $\mu$, and that if $(n_k)_{k\geq 0}$ is a sequence of integers, then $$E^{(n_k)}=\bigcup\bigl\{ G(\mu);\;\mu\in\rnk\bigr\}.$$
\begin{corollary}\label{points} Let $(n_k)_{k\geq 0}$ be a sequence of integers. Each of the following assertions is equivalent to the rigidity of $(n_k)$.

\begin{itemize}
\item[\rm (a)]  The group $G^{(n_k)}$ generated by $E^{(n_k)}$ is dense in $\T$.
\item[\rm (b)] For any neighborhood $V$ of $1$ in $\T$, there exists a measure $\mu\in\rnk$ such that $\mu\neq\delta_1$ and $\mu(V)>0$.
\end{itemize}
\end{corollary}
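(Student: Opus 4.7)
The plan is to route both equivalences through Theorem~\ref{Theorem 5}, which characterizes rigidity of $(n_k)$ as density of $\rnk$ in $\mathcal{P}(\T)$ (with $\rnk \cap \mathcal{P}_c(\T)$ also dense in the rigid case). With this reduction, (a) follows from the material already proved in this subsection, while (b) is a matter of translating density into a local condition at~$1$.

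For the equivalence of rigidity with (a), the direction (a) $\Rightarrow$ rigidity is exactly Corollary~\ref{densitybis}. For the converse, I would start from $\rnk$ being dense in $\mathcal{P}(\T)$ and argue that $E^{(n_k)}$ cannot be finite: otherwise every $\mu \in \rnk$, having $\mathrm{supp}(\mu) \subseteq G(\mu) \subseteq E^{(n_k)}$, would be a convex combination of the finitely many Diracs $\delta_z$ with $z\in E^{(n_k)}$, so $\rnk$ would sit in a finite-dimensional simplex in $\mathcal{P}(\T)$, contradicting density. Hence $G^{(n_k)}$ is an infinite subgroup of $\T$; since every non-discrete subgroup of $\T$ is dense (the only proper closed subgroups of $\T$ being the finite cyclic ones), (a) follows.

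For the equivalence of rigidity with (b), the direction rigidity $\Rightarrow$ (b) is a direct density argument: given an open neighborhood $V$ of $1$, I would pick some $z \in V \setminus \{1\}$ and approximate $\delta_z$ by a continuous measure $\mu \in \rnk \cap \mathcal{P}_c(\T)$ (possible by Theorem~\ref{Theorem 5}); the portmanteau theorem applied to the open set $V$ gives $\mu(V)$ close to $\delta_z(V)=1$, and continuity of $\mu$ forces $\mu \neq \delta_1$. For the converse (b) $\Rightarrow$ rigidity, I plan to verify (a) and conclude by the first equivalence. Given any open $V \ni 1$, (b) furnishes $\mu \in \rnk$ with $\mu \neq \delta_1$ and $\mu(V) > 0$; reading this as saying $\mu$ has genuine mass in $V \setminus \{1\}$, one extracts a support point $z \in \mathrm{supp}(\mu) \cap (V \setminus \{1\}) \subseteq E^{(n_k)} \subseteq G^{(n_k)}$. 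Shrinking $V$ around $1$ then shows that $G^{(n_k)}$ has $1$ as an accumulation point; as a non-discrete subgroup of $\T$ must be dense, this yields (a), and hence rigidity.

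The main subtlety is in the precise use of (b) for the direction (b) $\Rightarrow$ rigidity: taken literally, "$\mu \neq \delta_1$ and $\mu(V) > 0$" could be witnessed by $\mu = \tfrac{1}{2}\delta_1 + \tfrac{1}{2}\delta_{-1}$ with $V$ a small neighborhood of $1$ missing $-1$, which provides no support point of $\mu$ in $V \setminus \{1\}$. The intended reading is that the two conditions together assert $\mu(V \setminus \{1\}) > 0$, from which the desired support point in $V \setminus \{1\}$ follows immediately; if needed, one can normalize the restriction of $\mu$ to $V \setminus \{1\}$ and remain in $\rnk$ by the hereditary property in Lemma~\ref{Fact 0 bis}(3). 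Once this interpretation is fixed, the argument reduces cleanly to accumulation of $E^{(n_k)}$ at $1$.
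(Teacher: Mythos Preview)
Your treatment of (a) is correct and coincides with the paper's: Corollary~\ref{densitybis} together with the Remark following it already show that (a) is equivalent to density of $\rnk$ in $\mathcal{P}(\T)$, and Theorem~\ref{Theorem 5} converts this into rigidity.

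For (b), you are more careful than the paper, and rightly so. The paper's one-line proof simply asserts that ``if (b) holds true, then it is equally clear that $E^{(n_k)}$ is dense in $\T$'', but the subtlety you flag is a genuine gap: as literally stated, (b) is \emph{not} equivalent to rigidity. Your example $\mu=\tfrac12\delta_1+\tfrac12\delta_{-1}$ does the job. Concretely, take $n_k=2k$. Since $\widehat\mu(2k)=\widehat{\mu_{\#2}}(k)$ and $\mathcal R^{(k)}=\{\delta_1\}$ (because $\frac1N\sum_{k=1}^N\widehat\nu(k)\to\nu(\{1\})$ forces $\nu=\delta_1$ whenever $\widehat\nu(k)\to 1$), one finds that $\mathcal R^{(2k)}$ consists exactly of the measures $a\delta_1+(1-a)\delta_{-1}$, $a\in[0,1]$. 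For every neighbourhood $V$ of $1$ the fixed choice $\mu=\tfrac12\delta_1+\tfrac12\delta_{-1}$ satisfies $\mu\neq\delta_1$ and $\mu(V)\ge\mu(\{1\})=\tfrac12>0$; so (b) holds, yet $G^{(2k)}=\{1,-1\}$ is finite and $(2k)$ is not a rigidity sequence. The paper's claim that (b) forces $E^{(n_k)}$ to be dense therefore fails as written.

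Your proposed amendment---reading (b) as ``$\mu(V\setminus\{1\})>0$''---repairs the statement, and your argument then goes through: extract a support point $z\in\mathrm{supp}(\mu)\cap(V\setminus\{1\})\subseteq E^{(n_k)}$, let $V$ shrink to conclude that the subgroup $G^{(n_k)}$ accumulates at $1$ and is therefore dense, and invoke the first equivalence. The forward implication rigidity~$\Rightarrow$~(b) is fine in either formulation, via your approximation of $\delta_z$ for some $z\in V\setminus\{1\}$ by a continuous measure in $\rnk$.
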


 \begin{proof} By Corollary~\ref{densitybis}, (a) is equivalent to the density of $\rnk$. As for (b), it is rather clear that it is implied by the density of $\rnk$; and conversely, if (b) holds true, then it is equally clear that $E^{(n_k)}$ is dense in $\T$.
\end{proof}

This result allows us to retrieve very easily all known examples of rigidity sequences from 
\cite{BDLR,EG,FT,BaGrLyons}. Indeed, the main result of \cite{FT} states that if there exists an irrational $z \in\T$  such that ${z ^{\nk}}\rightarrow{1,}$ then $(n_k)$ is a rigidity sequence; and this follows at once from Corollary \ref{points} (a), since $G(\delta_z)$ alone is already dense in $\T$. Likewise, as already mentioned in Subsection~\ref{ss:3a},  the following generalization of the result from \cite{FT} is proved in \cite[Theorem 2.3]{BaGrLyons}: if the subgroup $$\Gamma^{(n_k)}=\{z \in\T\,;\, {z ^{\nk}}\rightarrow{1}\}$$ is dense in $\T$, then $(n_k)$ is a rigidity sequence; and  again, this follows immediately from Corollary \ref{points} (a). 

\smallskip
However, recall that Fayad and Kanigowski constructed in \cite{FK} examples of rigidity sequences $\nkp{0}$ for which $\Gamma^{(n_k)}=\{ 1\}$ (and that a much stronger result was proved by Griesmer in \cite{Grie}).  
Note that if $\Gamma^{(n_k)}=\{ 1\}$, then $\mathcal{R}^{(\nk)}$ contains only continuous measures except $\delta_1$; so Theorem \ref{Theorem 5} seems useless in this case, \mbox{\it i.e.} the existence of a rigidity sequence $\nkp{0}$ such that 
$\Gamma^{(n_k)}=\{ 1\}$ cannot follow directly from it. Yet, we will see below that a suitable strengthening of Theorem \ref{Theorem 5} can be used to simplify the proof of one of the main results of \cite{FK}.

\smallskip For any sequence of integers $(n_k)_{k\geq 0}$ and any $\varepsilon>0$, let us define
\[
\mathcal{R}_{\varepsilon }^{(\nk)}:=\Bigl\{\mu \in\mathcal{P}(\T)\,:\,\overline{\lim_{k\to\infty}}\,\,|\much(\nk)-1|<\varepsilon\Bigr\}
\]
and $$\Gamma^{(n_k)}_\varepsilon:=\bigl\{ z\in\T;\; \overline{\lim}_{k\to\infty}\, \vert z^{n_k}-1\vert<\varepsilon\bigr\}.$$

\begin{theorem}\label{vrai?} Let $\nkp{0}$ be a sequence of integers. Then  $(n_k)$ is a rigidity sequence if and only if all the sets $\rnk_\varepsilon$, $\varepsilon >0$ are dense in $\mathcal P(\T)$. 
\end{theorem}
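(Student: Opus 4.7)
The forward implication is immediate: since $\rnk\subseteq \mathcal R^{(\nk)}_\ep$ for every $\ep>0$, rigidity of $(\nk)$ combined with Theorem~\ref{Theorem 5} gives the density of each $\mathcal R^{(\nk)}_\ep$ in $\mpt$.

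For the converse, assume every $\mathcal R^{(\nk)}_\ep$ is dense in $\mpt$; by Theorem~\ref{Theorem 5} it suffices to show that $\rnk$ itself is dense in $\mpt$. Given a nonempty open $V\subseteq\mpt$, I would construct $\mu\in V\cap\rnk$ by a Baire-type nested-ball iteration. Fix a complete metric $d$ compatible with the Prokhorov topology and build inductively triples $(\mu_m,N_m,V_m)$ with $V_0:=V$: at stage $m\ge 1$, the density of $\mathcal R^{(\nk)}_{1/m}$ yields $\mu_m\in V_{m-1}\cap\mathcal R^{(\nk)}_{1/m}$; pick $N_m>N_{m-1}$ with $\sup_{k\ge N_m}|\widehat{\mu_m}(\nk)-1|<1/m$; and set
\[
V_m:=\bigl\{\sigma\in V_{m
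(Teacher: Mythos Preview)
Your forward implication is correct and matches the paper's.

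For the converse, the nested-ball scheme you sketch has a genuine gap. The condition $\sup_{k\ge N_m}|\widehat{\mu_m}(n_k)-1|<1/m$ involves infinitely many Fourier coefficients and is \emph{not} open in the Prokhorov topology. Whatever open set $V_m$ you choose around $\mu_m$, it can only control $\widehat\sigma(n_k)$ for finitely many indices $k$. At the next stage you pick $\mu_{m+1}\in V_m\cap\mathcal R^{(n_k)}_{1/(m+1)}$ and then an index $N_{m+1}$ beyond which $\widehat{\mu_{m+1}}(n_k)$ is close to $1$; but $N_{m+1}$ is determined only \emph{after} $V_m$ has been fixed, so you have no control over $\widehat{\mu_{m+1}}(n_k)$ (nor over $\widehat\mu(n_k)$ for the limit $\mu$) in the range $N_m\le k<N_{m+1}$. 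Neither the tail estimate for $\mu_m$ nor the weak*-proximity encoded in $V_m$ helps there, and these uncontrolled gaps prevent you from concluding that $\widehat\mu(n_k)\to 1$.

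The paper overcomes exactly this obstacle via an averaging step (Claim~\ref{machin}): given $\mu\in\mathcal R^{(n_k)}_\varepsilon$, one builds $\mu'\in\mathcal R^{(n_k)}_{\varepsilon'}$ as a convex combination $\frac1M\sum_{i=1}^M\mu_i$ of measures $\mu_i\in\mathcal R^{(n_k)}_{\varepsilon'}$, staggered so that for every $k\ge 0$ at most one $\mu_i$ is ``bad'' at $n_k$; the bad term contributes at most $2/M$, and one obtains the \emph{uniform} bound $\sup_{k\ge 0}|\widehat{\mu'}(n_k)-\widehat\mu(n_k)|\le 4\varepsilon$. This uniform closeness over \emph{all} $k$ (not just a tail) is the missing ingredient: it allows one to iterate with $\varepsilon_j=2^{-j-1}$ and pass to a limit $\mu$ whose limsup is controlled by $\varepsilon_r+4\sum_{j\ge r}\varepsilon_j$ for every $r$, hence lies in $\mathcal R^{(n_k)}$. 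Without such a device, the Baire-style iteration you propose does not close.
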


From this, we get a stronger version of \cite[Theorem 2.3]{BaGrLyons}.
\begin{corollary}\label{cor:49}  If all the sets $\Gamma^{(n_k)}_\varepsilon$, $\varepsilon >0$ are dense in $\T$, 
then $(n_k)$ is a rigidity sequence.
\end{corollary}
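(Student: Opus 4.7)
My plan is to deduce Corollary~\ref{cor:49} directly from Theorem~\ref{vrai?}: it suffices to show that if every $\Gamma^{(n_k)}_{\varepsilon}$ is dense in $\T$, then every $\mathcal R^{(n_k)}_{\varepsilon}$ is dense in $\mathcal P(\T)$. The guiding observation is that any $z\in\Gamma^{(n_k)}_{\varepsilon}$ gives a Dirac mass $\delta_z\in\mathcal R^{(n_k)}_{\varepsilon}$, since $\widehat{\delta_z}(n)=z^n$. So one should build approximations of arbitrary target measures using finitely supported convex combinations of such Dirac masses, and verify that such combinations stay inside $\mathcal R^{(n_k)}_{\varepsilon}$.

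Fix $\varepsilon>0$ and a target $\nu\in\mathcal P(\T)$. First I would use the well-known fact that the set of finitely supported probability measures is dense in $\mathcal P(\T)$ for the Prokhorov topology, reducing the problem to approximating measures of the form $\sigma=\sum_{i=1}^{N}t_i\delta_{w_i}$ with $t_i>0$, $\sum t_i=1$ and $w_i\in\T$. Next I would pick any $\varepsilon'\in(0,\varepsilon)$ and use the density hypothesis on $\Gamma^{(n_k)}_{\varepsilon'}$ to choose, for each $i$, a point $w_i'\in\Gamma^{(n_k)}_{\varepsilon'}$ as close to $w_i$ as we wish. Setting $\mu:=\sum_{i=1}^{N}t_i\delta_{w_i'}$, continuity of $w\mapsto\delta_w$ into $(\mathcal P(\T),\text{weak})$ and convexity of the weak topology make $\mu$ arbitrarily close to $\sigma$, hence to $\nu$.

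The main (and only) computation to carry out is checking that $\mu\in\mathcal R^{(n_k)}_{\varepsilon}$. Writing
\begin{equation*}
\bigl\vert\widehat{\mu}(n_k)-1\bigr\vert=\Bigl\vert\sum_{i=1}^{N}t_i\bigl((w_i')^{n_k}-1\bigr)\Bigr\vert\le\sum_{i=1}^{N}t_i\bigl\vert(w_i')^{n_k}-1\bigr\vert,
\end{equation*}
taking $\overline{\lim}_{k\to\infty}$ and using $w_i'\in\Gamma^{(n_k)}_{\varepsilon'}$ yields
\begin{equation*}
\overline{\lim_{k\to\infty}}\;\bigl\vert\widehat{\mu}(n_k)-1\bigr\vert\le\sum_{i=1}^{N}t_i\,\overline{\lim_{k\to\infty}}\;\bigl\vert(w_i')^{n_k}-1\bigr\vert<\varepsilon'<\varepsilon,
\end{equation*}
so $\mu\in\mathcal R^{(n_k)}_{\varepsilon}$ as required.

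There is essentially no obstacle here: the argument is a soft mix of the Prokhorov-topology density of finitely supported measures, a convexity estimate on Fourier coefficients, and the two-step hypothesis that $\Gamma^{(n_k)}_{\varepsilon'}$ is dense for \emph{every} positive $\varepsilon'$, which is exactly what lets one absorb the strict inequality in the definition of $\mathcal R^{(n_k)}_{\varepsilon}$. The only subtle point is remembering to pick $\varepsilon'$ strictly smaller than $\varepsilon$ so that the final \emph{strict} bound survives the limsup; without this the conclusion would only give $\mu\in\overline{\mathcal R^{(n_k)}_{\varepsilon}}$ in a spurious sense. Once this is in hand, Theorem~\ref{vrai?} concludes that $(n_k)$ is a rigidity sequence.
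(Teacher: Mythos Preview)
Your proof is correct and follows essentially the same route as the paper: Dirac masses at points of $\Gamma^{(n_k)}_{\varepsilon}$ lie in $\mathcal R^{(n_k)}_{\varepsilon}$, convex combinations of such Diracs stay in $\mathcal R^{(n_k)}_{\varepsilon}$ and are dense in $\mathcal P(\T)$, and Theorem~\ref{vrai?} finishes the argument. One minor remark: the auxiliary $\varepsilon'<\varepsilon$ is not actually needed, since a \emph{finite} convex combination of quantities each strictly less than $\varepsilon$ is itself strictly less than $\varepsilon$; the paper simply invokes the convexity of $\mathcal R^{(n_k)}_{\varepsilon}$ without this extra step.
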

\begin{proof} This is clear since $\rnk_{\varepsilon}$, being convex, contains every measure $\mu\in\mathcal P(\T)$ whose support is finite and contained in $\Gamma^{(n_k)}_\varepsilon$.
\end{proof}

 \subsection{Proof of Theorem \ref{Theorem 5}}
  We have already observed in Corollary \ref{density} that if $\nkp{0}$ is a rigidity sequence, then $\rnk\cap \mathcal P_c(\T)$ is dense in $\mathcal P(\T)$; so we just have to show that if $\rnk$ is dense in $\mathcal P(\T)$, then $(n_k)$ is a rigidity sequence.

  \par\smallskip One possible way to do this is to argue by contradiction and to use \cite[Theorem 2.3]{BaGrLyons}. Indeed, assume that $(n_k)$ is not a rigidity sequence, \mbox{\it i.e.} that $\mathcal R^{(n_k)}$ contains no continuous measure. Since $\mathcal R^{(n_k)}$ is hereditary for absolute continuity, it follows that $\mathcal R^{(n_k)}$ contains only discrete measures. Since $\mathcal R^{(n_k)}$ is dense in $\mathcal P(\T)$ and (again) hereditary for absolute continuity, this implies that $\Gamma^{(n_k)}=\{ z\in\T;\; z^{n_k}\to 1\}=\{ z\in\T;\; \delta_z\in \mathcal R^{(n_k)}\}$ is dense in $\T$. So, by \cite[Theorem 2.3]{BaGrLyons}, $(n_k)$ is a rigidity sequence after all!
  
  \par\smallskip However, since Theorem \ref{Theorem 5} is formally more general than \cite[Theorem 2.3]{BaGrLyons}, it seems desirable to provide a proof which does not rely on the latter. This is what we are going to do now. As a by-product, we will therefore get a new proof of \cite[Theorem 2.3]{BaGrLyons}, which will also be completely different. Indeed, in \cite{BaGrLyons} a continuous measure $\mu\in\mathcal R^{(n_k)}$ was explicitly constructed, while our proof relies on a Baire category argument which can be stated abstractly as follows:

 \begin{lemma}\label{Lemma 6}
  Let $X$ be a Banach space, $P$ a closed convex subset of its bidual $X^{**}$, and $C$ a closed convex subset of $P\cap X$. Let also $(O_{n})_{n\ge 1}$ be a sequence of convex subsets of $P$ which are open in $P$ for the $w^{**}$-$\,$topology. Suppose that 
 $C$ as well as all sets $O_{n}$, $n\ge 1$ are $w^{**}$-$\,$dense in $P$. Then the set
$C\cap\bigcap_{n\geq 1} O_n
$
is norm-dense in $C$.
 \end{lemma}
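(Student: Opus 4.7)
The plan is to combine Mazur's theorem (which identifies norm closure with weak closure for convex subsets of a normed space) with the classical Baire category theorem, applied to the complete metric space $(C,\|\cdot\|)$; the latter is complete since $C$ is norm-closed in the Banach space $X$. For each $n\geq 1$, I set $C_n:=C\cap O_n$; this set is convex, which is precisely where the convexity hypothesis on the $O_n$ enters. The strategy is then to show that each $C_n$ is simultaneously norm-open and norm-dense in $C$, so that the Baire category theorem forces $\bigcap_{n\geq 1} C_n = C\cap\bigcap_{n\geq 1} O_n$ to be norm-dense in $C$.

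The norm-openness of $C_n$ in $C$ comes from the observation that the canonical embedding $X\hookrightarrow X^{**}$ is a homeomorphism from the weak topology $\sigma(X,X^*)$ onto its image equipped with the restricted $w^{**}$-topology. Consequently, the $w^{**}$-topology of $P$ restricted to $C\subseteq X$ coincides with the weak topology of $C$ inherited from $X$. Since $O_n$ is $w^{**}$-open in $P$, the trace $C_n$ is weakly open in $C$, and hence \emph{a fortiori} norm-open in $C$ (the norm topology being finer than the weak topology).

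The weak density of $C_n$ in $C$ is the heart of the matter, and proceeds in two stages. Fix $c\in C$ and a $w^{**}$-open neighborhood $V$ of $c$ in $X^{**}$. Then $V\cap P$ is a $w^{**}$-open neighborhood of $c$ in $P$, and the $w^{**}$-density of $O_n$ in $P$ gives $V\cap O_n\neq\emptyset$. The key point is that $V\cap O_n$ is itself a nonempty $w^{**}$-open subset of $P$; applying now the $w^{**}$-density of $C$ in $P$ delivers $C\cap V\cap O_n\neq\emptyset$, which is exactly the required weak density. Since $C_n$ is convex, Mazur's theorem upgrades this weak density to norm density in $C$.

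With each $C_n$ norm-open and norm-dense in the complete metric space $(C,\|\cdot\|)$, the Baire category theorem concludes the argument. The main step to watch carefully is the two-stage density manoeuvre: one must first exploit the density of $O_n$ in $P$ to produce a nonempty relatively $w^{**}$-open piece of $P$ inside $V$, and only then invoke the density of $C$ in $P$ to harvest a point of $C$ inside that piece; reversing the order would not work. Beyond that, the remaining effort is a routine combination of Mazur and Baire, but it is the interplay between the $w^{**}$-topology of $X^{**}$ and the norm structure of $X$, mediated by the canonical embedding, that makes the lemma useful in applications like the proof of Theorem~\ref{Theorem 5}.
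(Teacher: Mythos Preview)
Your proof is correct and follows essentially the same approach as the paper's own proof: show that each $C\cap O_n$ is convex, weakly dense and weakly open in $C$, upgrade weak density to norm density via Mazur's theorem, and conclude by the Baire category theorem. The paper phrases the density step in the reverse order (first $C\cap O_n$ is $w^{**}$-dense in $O_n$ because $C$ is dense in $P$ and $O_n$ is open, then dense in $P$ because $O_n$ is dense), but this is merely a cosmetic difference; both arguments exploit exactly the same two facts.
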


\begin{proof}
Since $C$ is $w^{**}$-$\,$dense in $P$ and all the sets $O_{n}$ are $w^{**}$-$\,$open in $P$, we see that $C\cap O_{n}$ is $w^{**}$-$\,$dense in $O_{n}$, and hence in $P$. Therefore, $C\cap O_{n}$ is weakly dense in $C$. Since $C$ and $O_{n}$ are convex, it follows that $C\cap O_{n}$ is norm-dense in $C$ by Mazur's theorem. Moreover, $C\cap O_{n}$ is weakly open in $C$, hence norm-open. As $C$ is a closed subset of $X$, the Baire category theorem implies that $\bigcap _{n\ge 1} (C\cap O_{n})$ is norm-dense in $C$, which had to be proved.
\end{proof}

\begin{proof}[Proof of Theorem \ref{Theorem 5}] Going back to the proof of Theorem \ref{Theorem 5}, we assume that $\rnk$ is dense in $\mathcal P(\T)$, and we wish to show that $\mathcal{R}^{(\nk)}\cap\mathcal{P}_{c}(\T)\neq\emptyset$. We will in fact prove directly that  
$\mathcal{R}^{(\nk)}\cap\mathcal{P}_{c}(\T)$ is dense in $\mathcal{P}(\T)$.
\par\smallskip
Observe first that $\mathcal{P}_{c}(\T)$ can be written as $\mathcal{P}_{c}(\T)=\bigcap_{n\ge 1}\mathcal O_{n}$, where each set $\mathcal O_{n}\subseteq \mathcal{P}(\T)$ is open, convex, and dense in $\mathcal{P}(\T)$. (This is a more precise version of the well known fact that $\mathcal P_c(\T)$ is a dense $G_\delta$ subset of $\mathcal P(\T)$.) Indeed, fix for every $k\ge 1$ a finite covering
$(V_{i,k})_{i\in I_{k}}$ of $\T$ by open arcs of length less than $2^{-k}$, in such a way that for every $k\ge 2$ and every $i\in I_{k}$, there exists $i'\in I_{k-1}$ such that $V_{i,k}\subseteq V_{i',k-1}$. For every $n\ge 1$, define
\[
\mathcal{O}_{n}:=\bigl\{\mu \in\mathcal{P}(\T)\,;\,\exists\,k\ \forall\, i\in I_{k}\;:\; \mu (\ba{V}_{i,k})<2^{-n}\bigr\}.
\]
Since the map ${\mu }\mapsto{\mu (F)}$ is upper semi-continuous on $\mathcal{P}(\T)$ for every closed subset $F$ of $\T$, the set $\mathcal{O}_{n}$ is open in $\mathcal{P}(\T)$. Moreover, $\mathcal{O}_{n}$ is convex. Indeed, if $\mu ,\mu'\in \mathcal{O}_{n}$, one can choose $k$ and $k'$ such that $\mu' (\ba{V}_{i,k})<2^{-n}$ for every $i\in I_{k}$ and $\mu' (\ba{V}_{i',k'})<2^{-n}$ for every $i'\in I_{k'}$. Suppose for instance that $k\ge k'$. For every $i\in I_{k}$, there exists an index $i'\in I_{k'}$ such that $\ba{V}_{i,k}\subseteq\ba{V}_{i',k'}$; hence $\mu '(\ba{V}_{i,k})< 2^{-n}$. So, for any $s\in [0,1]$ we have 
$(s\mu +(1-s )\mu ')(\ba{V}_{i,k})<2^{-n}$ for every $i\in I_{k}$, and hence 
$s \mu +(1-s )\mu '$ belongs to $\mathcal{O}_{n}$. It is not difficult to check that $\mathcal{O}_{n}$ is dense in $\mathcal{P}(\T)$ for every $n\geq 1$, and that $\bigcap_{n\ge 1}\mathcal{O}_{n}=\mathcal{P}_{c}(\T)$.
\par\smallskip
Define a map ${J:\mathcal{P}(\T)}\longmapsto{\ell_{\infty}(\N)}$ by setting
\[
(J\mu )(n):=
\begin{cases}
\much(\nk)-1&\textrm{if}\ n=n_k\ \hbox{for some $k$},\\
2^{-n}\much(n)&\textrm{if}\ n\not\in\{\nk\,;\,k\ge 0\}.
\end{cases}
\]
This map $J$ is continuous from $\mathcal{P}(\T)$ into $\ell_{\infty}(\N)=c_{0}(\N)^{**}$ endowed with its $w^{**}$-$\,$topology; and $J$ is also injective. Since $\mathcal{P}(\T)$ is compact, $J$ is an homeomorphism from $\mathcal{P}(\T)$ onto $P:=J(\mathcal{P}(\T))$. In particular, $P$ is $w^{**}$-$\,$closed, hence norm-closed in $\ell_{\infty}( \N )$. Since $J$ is an affine map, $P$ is convex. Set now $C:=J(\mathcal{R}^{(\nk)})$. By the definition of the map $J$, we have $C=P\,\cap\,c_{0}(\N)$, so that $C$ is a closed convex subset of $c_{0}(\N)$. Moreover, $C$ is $w^{**}$-$\,$dense in $P$ since $\rnk$ is dense in $\mathcal P(\T)$. If we set $O_{n}:=J(\mathcal{O}_{n})$ for each $n\ge 1$, the fact that $J$ is an affine homeomorphism implies that $O_{n}$ is convex, $w^{**}$-$\,$open, and $w^{**}$-$\,$dense in $P$. So the hypotheses of Lemma \ref{Lemma 6} are fulfilled, and hence $\bigcap_{n\ge 1}O_{n}\,\cap\,C$ is norm-dense in $C$. In other words, $\bigcap_{n\ge 1} J(\mathcal{O}_{n})\,\cap \,J(\mathcal{R}^{(\nk)})$ is norm-dense in $J(\mathcal{R}^{(\nk)})$, and in particular $w^{**}$-$\,$dense. Since $J$ is a homeomorphism from $\mathcal P(\T)$ onto $(P,w^{**})$, it follows that $\mathcal P_c(\T)\cap \rnk=\bigl(\bigcap_n\mathcal O_n\bigr)\cap\rnk$ is dense in $\rnk$, and hence in $\mathcal P(\T)$. 
\end{proof}

\begin{remark}\label{coupagedecheveux} What the proof of Theorem \ref{Theorem 5} actually shows is the following. Let $(n_k)_{k\ge0}$ be an arbitrary sequence of integers, and let $\mathcal G$ be a $G_\delta$ subset of $\mathcal P(\T)$. Let also $\mathcal P$ be a closed convex subset of $\mathcal P(\T)$. 
Assume that $\mathcal G$ can be written as $\mathcal G=\bigcap_{n\in\N} \mathcal O_n$, where the sets $\mathcal O_n$ are open, {convex} and such that $\mathcal O_n\cap\overline{\rnk\cap\mathcal P}$ is dense in $\overline{\rnk\cap\mathcal P}$. Then $\mathcal G\cap \rnk\cap\mathcal P$ is dense in $\overline{\rnk\cap\mathcal P}$. In particular, if $(n_k)$ is a rigidity sequence, then $\rnk\cap\mathcal G\neq\emptyset$ for any dense $G_\delta$ set $\mathcal G\subseteq\mathcal P(\T)$ which is the intersection of a sequence of convex open sets. Nevertheless, it may be worth pointing out that $\rnk$ is always \emph{meager} in $\mathcal P(\T)$, for any sequence of pairwise distinct integers $(n_k)_{k\geq 0}$. Indeed, if we set $\mathcal F_K:=\bigl\{\mu\in\mathcal P(\T)\,;\; \forall k\geq K\,:\, \vert\widehat\mu(n_k)-1\vert\leq 1/2\bigr\}$ for each $K\in\N$, then $\mathcal F_K$ is a closed set with empty interior in $\mathcal P(\T)$, and $\rnk$ is contained in $\bigcup_{K\in\N}\mathcal F_K$.
\end{remark}


\subsection{Proof of Theorem \ref{vrai?}}
The essential ingredient of the proof is contained in the following claim.

\begin{claim}\label{machin} Let $\varepsilon, \epsilon'>0$. Given any measure $\mu\in\rnk_{\varepsilon}$ and $N\in\N$, $\eta>0$, there exists $\mu'\in \rnk_{\varepsilon'}$ such that 
\begin{enumerate}
 \item[(a)] $\vert \widehat{\mu'}(n)-\widehat\mu(n)\vert<\eta \textrm{ for every } \vert n\vert\leq N$;
\item[(b)] $\sup_{k\geq 0} \vert\widehat{\mu'}(n_k)-\widehat{\mu}(n_k)\vert\leq 4\varepsilon$.
\end{enumerate}
\end{claim}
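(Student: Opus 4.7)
The plan is to deduce the claim from a Hahn--Banach separation argument in the Banach space $\ell_\infty(\Z_+)$, relying crucially on the Yosida--Hewitt decomposition of its dual $\ell_\infty(\Z_+)^*=ba(\Z_+)$. The trivial case $\varepsilon'\geq\varepsilon$ (in which $\mu':=\mu$ already works, since $\mu\in\rnk_\varepsilon\subseteq\rnk_{\varepsilon'}$) being out of the way, I may assume $\varepsilon'<\varepsilon$; in particular $\varepsilon'<3\varepsilon$, which will be the decisive inequality. Let $V:=\{\mu'\in\mathcal{P}(\T)\,;\,|\widehat{\mu'}(n)-\widehat{\mu}(n)|<\eta\text{ for every }|n|\leq N\}$ be the weakly open, convex neighborhood of $\mu$ encoded by~(a), and introduce the affine map $T:\mathcal{P}(\T)\to\ell_\infty(\Z_+)$ defined by $T\mu'(k):=\widehat{\mu'}(n_k)-\widehat{\mu}(n_k)$; this map is continuous from the weak topology on $\mathcal{P}(\T)$ to the $w^{*}$ topology on $\ell_\infty(\Z_+)$. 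Setting $C:=T(V\cap\rnk_{\varepsilon'})$, a convex subset of $\ell_\infty(\Z_+)$, and $B:=B_{\ell_\infty}(0,4\varepsilon)$ (closed ball), what has to be shown is simply that $C\cap B\neq\emptyset$: any $\mu''\in V\cap\rnk_{\varepsilon'}$ with $T\mu''\in B$ furnishes the desired~$\mu'$.

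Arguing by contradiction, suppose $C\cap B=\emptyset$. Since $B$ has nonempty interior, the geometric Hahn--Banach theorem produces $\phi\in\ell_\infty(\Z_+)^*$ with $\|\phi\|=1$ such that $\operatorname{Re}\phi(x)\leq 4\varepsilon$ for every $x\in B$ and $\operatorname{Re}\phi(x)\geq 4\varepsilon$ for every $x\in C$. The key move is then to split $\phi=\phi_c+\phi_s$ via Yosida--Hewitt, where $\phi_c\in\ell_1(\Z_+)$ is the countably additive component and $\phi_s\in c_0(\Z_+)^\perp$ the purely finitely additive one, with $\|\phi_c\|+\|\phi_s\|=\|\phi\|=1$.

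The crucial estimate exploits that $\phi_s$ annihilates $c_0(\Z_+)$: truncating any sequence $x\in\ell_\infty(\Z_+)$ past the index beyond which $|x_k|$ stays close to its $\limsup$ immediately gives $|\phi_s(x)|\leq\|\phi_s\|\,\overline{\lim}_k|x_k|$. Applied to $x=T\mu''$ with $\mu''\in\rnk_{\varepsilon'}$, the triangle inequality combined with $\mu\in\rnk_\varepsilon$ yields $\overline{\lim}_k|T\mu''(k)|\leq\overline{\lim}_k|\widehat{\mu''}(n_k)-1|+\overline{\lim}_k|\widehat{\mu}(n_k)-1|<\varepsilon+\varepsilon'$, hence $|\phi_s(T\mu'')|\leq\|\phi_s\|(\varepsilon+\varepsilon')$, and therefore $\operatorname{Re}\phi_c(T\mu'')\geq 4\varepsilon-\|\phi_s\|(\varepsilon+\varepsilon')$ for every $\mu''\in V\cap\rnk_{\varepsilon'}$. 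Moreover, since $\phi_c\in\ell_1(\Z_+)$, the function $g_c(z):=\sum_k\phi_{c,k}z^{n_k}$ is a genuine continuous function on $\T$, so that the functional $\mu''\mapsto\phi_c(T\mu'')=\int_\T g_c\,d(\mu''-\mu)$ is \emph{weakly} continuous on $\mathcal{P}(\T)$.

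The conclusion then follows by passing to the weak limit: using the assumed density of $\rnk_{\varepsilon'}$ in $\mathcal{P}(\T)$, pick a sequence $\mu''_j\in V\cap\rnk_{\varepsilon'}$ with $\mu''_j\to\mu$ weakly, whence $\phi_c(T\mu''_j)\to\phi_c(T\mu)=0$. Taking real parts, $0\geq 4\varepsilon-\|\phi_s\|(\varepsilon+\varepsilon')$, which combined with $\|\phi_s\|\leq 1$ forces $\varepsilon'\geq 3\varepsilon$, contradicting the standing assumption. I expect the main obstacle to be recognizing the correct framework: the image $T(\rnk_{\varepsilon'})$ lies in a set awkwardly close to---but not contained in---$c_0(\Z_+)$, so the direct Mazur-type argument of Theorem~\ref{Mazur} does not apply. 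The decisive insight is that the purely finitely additive part of $\phi$ only ``sees'' the $\limsup$ of a sequence, which is precisely the quantity controlled by $\rnk_{\varepsilon'}$-membership, whereas density neutralizes the countably additive part via weak continuity. The $4\varepsilon$ in~(b) then appears naturally as the slack required to strictly exceed $\varepsilon+\varepsilon'$ under the assumption $\varepsilon'<3\varepsilon$.
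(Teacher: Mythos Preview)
Your argument is correct, and genuinely different from the paper's. The paper gives a direct, constructive proof: assuming $\eta,\varepsilon'\leq\varepsilon$, it fixes $k_0$ large enough that $|\widehat\mu(n_k)-1|<\varepsilon$ for $k>k_0$, then picks a large integer $M$ and successively chooses $\mu_1,\dots,\mu_M\in\rnk_{\varepsilon'}$ and indices $k_0<k_1<\cdots<k_M$ so that each $\mu_i$ agrees with $\mu$ (to within $\eta$) on Fourier coefficients up to $n_{k_{i-1}}$ and satisfies $|\widehat{\mu_i}(n_k)-1|<\varepsilon'$ for $k>k_i$. The measure $\mu':=\frac1M\sum_i\mu_i$ then works: for any $k$, at most one index $i=s$ (the one with $k_{s-1}<k\leq k_s$) is ``uncontrolled'', contributing at most $2/M$ to the average, while the remaining terms each contribute at most $2\varepsilon$ or $\eta$. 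Taking $M$ large yields the bound $4\varepsilon$.

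Your route replaces this explicit averaging by Hahn--Banach separation in $\ell_\infty(\Z_+)$, exploiting the Yosida--Hewitt decomposition $\ell_\infty^*=\ell_1\oplus c_0^\perp$: the $\ell_1$-part of a separating functional is weakly continuous on $\mathcal P(\T)$ and hence killed by density, while the $c_0^\perp$-part only sees the limsup of the sequence, which is controlled by $\varepsilon+\varepsilon'$. The paper's argument is more elementary and entirely constructive, with the constant $4\varepsilon$ emerging from an explicit estimate; your argument is shorter once the machinery is in place and clarifies \emph{why} the averaging works---it is essentially a concrete implementation of the fact that the obstruction to approximation lies in $c_0^\perp$ and is therefore bounded by the limsup. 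Both approaches use the standing hypothesis that $\rnk_{\varepsilon'}$ is dense in $\mathcal P(\T)$, which is implicit in the statement of the claim.
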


\begin{proof} Without any loss of generality, we can assume that $\eta, \varepsilon'\leq\varepsilon$. Let $k_0>N$ be such that $\vert\widehat\mu(n_k)-1\vert<\varepsilon$ for all $k> k_0$.  Next, let $M$ be a large integer (how large will be specified at the end of the proof). Since $\rnk_{\varepsilon'}$ is dense in $\mathcal P(\T)$, one can find $\mu_1,\dots ,\mu_M\in\rnk_{\varepsilon'}$ and $k_0<k_1<\cdots <k_M$ such that

\begin{itemize}
\item[\sbt] $\vert \widehat\mu_i(n)-\widehat\mu(n)\vert<\eta$ for every $\vert n\vert\leq n_{k_{i-1}}$;
\item[\sbt] $\vert \widehat\mu_i(n_k)-1\vert<\varepsilon'$ for every $k> k_i$.
\end{itemize}
Now, set
\[\mu':=\frac1M\sum_{i=1}^M \mu_i.\]
Since $\rnk_{\varepsilon'}$ is convex, $\mu'$ belongs to $\rnk_{\varepsilon'}$, and $\vert\widehat{\mu'}(n)-\widehat\mu(n)\vert<\eta$ for every $\vert n\vert\leq N$. For any $k\geq 0$, we have
\[ \vert\widehat{\mu'}(n_k)-\widehat\mu(n_k)\vert\leq \frac1M\sum_{i=1}^M \bigl\vert \widehat{\mu}_i(n_k)-\widehat\mu(n_k)\vert .
\]
If $k\leq k_0$, this gives immediately $\vert\widehat{\mu'}(n_k)-\widehat\mu(n_k)\vert\leq\eta\leq\varepsilon$; and if $k>k_M$, we may write
$ \vert\widehat{\mu}_i(n_k)-\widehat\mu(n_k)\vert =\vert (\widehat{\mu}_i(n_k)-1)-(\widehat\mu(n_k)-1)\vert$ to get that $\vert\widehat{\mu'}(n_k)-\widehat\mu(n_k)\vert\leq \varepsilon+\varepsilon'\leq 2\varepsilon$. Otherwise, there exists $1\leq s\leq M$ such that  $k_{s-1}<k\leq k_{s}$ for some $1\leq s\leq M$. We have in this case
\begin{eqnarray*}
\vert\widehat{\mu'}(n_k)-\widehat\mu(n_k)\vert &\leq& \frac1M\left( \sum_{i=1}^{s-1}\vert(\widehat{\mu}_i(n_k)-1)-(\widehat\mu(n_k)-1)\vert+
\vert \widehat{\mu}_s(n_k)-\widehat\mu(n_k)\vert\right.\\
&+&\left.\sum_{i= s+1}^{M}\vert\widehat{\mu}_i(n_k)-\widehat\mu(n_k)\vert\right)\\
 &\leq&\frac1M\bigl( (s-1)(\varepsilon+\varepsilon')+2+ (M-s)\,\eta\bigr)\\
 &\leq& 3\varepsilon+\frac2M\cdot
\end{eqnarray*}
So $\mu'$ satisfies the required properties (a) and (b) if $M$ is large enough.
\end{proof}

\begin{proof}[Proof of Theorem \ref{vrai?}] By Theorem \ref{Theorem 5}, it is enough to show that $\rnk$ is dense in $\mathcal P(\T)$ under the assumption that all sets $\rnk_{\varepsilon}$ are dense. 
It is therefore enough to show that given $\nu\in\mathcal P(\T)$, $N\in\N$ and $\eta>0$, one can find $\mu\in\rnk$ such that $\vert\widehat\mu(n)-\widehat\nu(n)\vert<\eta$ for all $\vert n\vert\leq N$.
\par\smallskip
Set $\varepsilon_j:= 2^{-j-1}$ for every  $j\geq 1$. By Claim \ref{machin}, one can find a sequence $(\mu_j)_{j\geq 1}$ of elements of $\mathcal P(\T)$ with the following properties:
\begin{enumerate}
\item[(i)] $\mu_j\in\rnk_{\varepsilon_j}$ for every  $j\geq 1$;
\item[(ii)] $\vert\widehat\mu_{1}(n)-\widehat\nu(n)\vert<\eta/2$ for every $\vert n\vert\leq N$;
\item[(iii)] $\vert\widehat\mu_{j+1}(n)-\widehat\mu_j(n)\vert< \varepsilon_j\eta$ for every $j\geq 1$ and every $\vert n\vert\leq N+j$;
\item[(iv)] $\sup_{k\ge 0}| \widehat{\mu}_{j+1}(n_k)-\widehat{\mu}_{j}(n_k)|\leq 4\varepsilon_j$ for every  $j\geq 1$.
\end{enumerate}

\smallskip
By  (iii), the sequence $(\mu_j)_{j\ge 1}$ converges in $\mathcal P(\T)$ to a certain measure $\mu$; and by (ii) and (iii), we have $\vert \widehat\mu(n)-\widehat\nu(n)\vert<\eta$ for all $\vert n\vert\leq N$. Moreover, it follows from (i) and (iv) that $\mu$ belongs to $\rnk$. Indeed we have $\sup_{k\ge 0}| \widehat{\mu}(n_k)-\widehat{\mu}_{r}(n_k)|\leq 4\sum_{j\geq r} \varepsilon_j$ for any $r\geq 1$; so we get
\[ \overline{\lim_{k\to\infty}}\; \vert \widehat\mu(n_k)-1\vert\leq \overline{\lim_{k\to\infty}}\; \vert \widehat\mu_r(n_k)-1\vert+4\sum_{j\geq r}\varepsilon_j\leq \varepsilon_r+4\sum_{j\geq r}\varepsilon_j\qquad\hbox{for any $r\geq 1$}.\]
The proof is now complete.
\end{proof}

\begin{remark}\label{le+economique} In order to show that all sets $\rnk_\varepsilon$ are dense in $\mathcal P(\T)$, it is enough to show that $\delta_z$ belongs to $\overline{\rnk_\varepsilon}$ for any $\varepsilon>0$ and every $z\in\T$. Indeed, the set $$\bigcap_{\varepsilon>0} \overline{\rnk_\varepsilon}$$ is closed and convex; so it is equal to $\mathcal P(\T)$ as soon as it contains every Dirac mass $\delta_z$.
\end{remark}

\subsection{An example} To illustrate Theorem \ref{vrai?}, we show how it can be used to give a streamlined proof of  \cite[Theorem 2]{FK}. 

\begin{example} 
\emph{Let $(n_k)_{k\geq 0}$ be a sequence of integers. Assume that there exists a sequence $(z_m)_{m\geq 1}\subseteq \T$ such that the following properties hold true:
\begin{enumerate}
\item[\rm (a)] for any $z\in\T$, $M\in\N$ and $\alpha>0$, one can find an integer $p$ such that $\vert z_m^p-z\vert<\alpha$ for $m=1,\dots ,M$ (this happens for example if the $z_m$ are rationally independent);
\item[\rm (b)] for any $M\in\N$ and $\eta >0$, one can find $K\geq 0$ such that
\[ \forall k\geq K\;:\; \vert z_m^{n_k}-1\vert<\eta\qquad\hbox{for at least $M-1$ choices of $m\in\{1,\ldots,M\}$}.\]
\end{enumerate}
Then, $(n_k)$ is a rigidity sequence.}
\end{example}
\begin{proof} Let us show that $\rnk_\varepsilon$ is dense in $\mathcal P(\T)$ for any $\varepsilon>0$. By Remark \ref{le+economique}, it is enough to show that $\overline{\rnk_\varepsilon}$ contains the Dirac mass $\delta_z$ for any $\varepsilon>0$ and every $z\in\T$; so let us fix $\varepsilon>0$ and $z\in\T$, and look for a measure $\mu\in\rnk_{\varepsilon}$ which is close to $\delta_z$.

Chose $M$ (depending only on $\varepsilon$) such that $1/M<\varepsilon/4$. Let also $\alpha>0$; choose $p$ (depending on $M$ and $\alpha$) according to property (a) and define 
\[ \mu:= \frac1M\sum_{m=1}^M \delta_{z_m^p}.\]
If $K\geq 0$ satisfies property (b) for $M$ and $\eta:=\varepsilon/2p$, we have 
\[ \vert \widehat\mu(n_k)-1\vert\leq \frac1M\sum_{m=1}^M \vert z_m^{pn_k}-1\vert\leq\frac1M\bigl( (M-1) p\eta+ 2\bigr)\leq3\varepsilon/4 \textrm{ for any }k\geq K.\]
So $\mu$ belongs to $\rnk_{\varepsilon}$. Moreover, since $\vert z_m^p-z\vert<\alpha$ for every $1\le m\le M$, the measure $\mu$ is as close to $\delta_z$ as we wish, provided that $\alpha$ is sufficiently small.
\end{proof}

\subsection{The complexity of rigidity}
Theorem \ref{vrai?} also has a rather unexpected descriptive set-theoretic consequence. Let us denote by $\mathbf{Rig}$ the set of all rigidity sequences $(n_k)_{k\geq 0}$. This is a subset of the Polish space $\Z^{\N_0}$, so it  makes sense to ask for the {descriptive complexity} of $\mathbf{Rig}$. 
By its very definition, $\mathbf{Rig}$ is obviously an analytic set, and it is quite natural to bet that it should be non-Borel. This is however not the case:
\begin{proposition}\label{bizarre} The set $\mathbf{Rig}$ is Borel in $\Z^{\N_0}$; more precisely, it is an $F_{\sigma\delta}$ set. Moreover, $\mathbf{Rig}$ is a \emph{true} $F_{\sigma\delta}$ set, \mbox{\it i.e.} it is not $G_{\delta\sigma}$.
\end{proposition}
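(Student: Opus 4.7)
The statement has two parts: that $\mathbf{Rig}$ is $F_{\sigma\delta}$, and that it is not $G_{\delta\sigma}$. The first will be deduced from Theorem \ref{vrai?} by a compactness argument; the second by a Wadge-style reduction.

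\emph{Upper bound.} Fix a countable dense subset $\{\nu_j\}_{j\ge 1}$ of $\mathcal P(\T)$ and a compatible metric $d$. By Theorem \ref{vrai?}, $(n_k)\in\mathbf{Rig}$ iff $\mathcal R_{1/l}^{(n_k)}$ is dense in $\mathcal P(\T)$ for every $l\ge 1$. The first step would be to unfold this density through the $\nu_j$'s and rewrite the strict inequalities in the definitions of $\mathcal R_{1/l}^{(n_k)}$ and of the open balls around $\nu_j$ as countable unions of closed conditions. This yields the equivalence: $\mathcal R_{1/l}^{(n_k)}$ is dense in $\mathcal P(\T)$ iff for every $j,m\ge 1$ there exist integers $l'>l$, $m'\ge m+1$, $K\ge 0$ and a measure $\mu\in\mathcal{P}(\T)$ with $d(\mu,\nu_j)\le 1/m'$ and $|\widehat\mu(n_k)-1|\le 1/l-1/l'$ for every $k\ge K$. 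One direction is immediate; the other uses the strictness of $\overline{\lim}_k|\widehat\mu(n_k)-1|<1/l$ to introduce the extra rational slack.

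The key observation is then that for each fixed tuple $(l,l',K,m',j)$ of positive integers, the set
\[
C_{l,l',K,m',j}:=\bigl\{(n_k)\in\Z^{\N_0}\,:\,\exists\,\mu\in\mathcal{P}(\T)\ \textrm{with}\ d(\mu,\nu_j)\le 1/m'\ \textrm{and}\ |\widehat\mu(n_k)-1|\le \tfrac{1}{l}-\tfrac{1}{l'}\ \forall k\ge K\bigr\}
\]
is \emph{closed} in $\Z^{\N_0}$. Indeed, since $\Z$ is discrete, the map $(\mu,(n_k))\mapsto \widehat\mu(n_k)$ is jointly continuous on $\mathcal P(\T)\times\Z^{\N_0}$ for every fixed $k$, so $C_{l,l',K,m',j}$ is the projection onto $\Z^{\N_0}$ of a closed subset of $\{\mu\in\mathcal P(\T):d(\mu,\nu_j)\le 1/m'\}\times\Z^{\N_0}$, whose first factor is compact as a closed subset of the compact space $\mathcal{P}(\T)$. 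The projection of a closed set across a compact factor being closed, $C_{l,l',K,m',j}$ is closed. Therefore
\[
\mathbf{Rig}=\bigcap_{l,j,m}\;\bigcup_{l',m',K}\;C_{l,l',K,m',j},
\]
which is visibly an $F_{\sigma\delta}$ set.

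\emph{Lower bound.} To prove that $\mathbf{Rig}$ is not $G_{\delta\sigma}$, the plan is to establish $\boldsymbol{\Pi}^0_3$-hardness by exhibiting a continuous reduction from the classical $\boldsymbol{\Pi}^0_3$-complete set $P:=\{(a_{n,k})\in 2^{\N\times\N}:\forall n\,\exists K\,\forall k\ge K,\ a_{n,k}=0\}$ into $\mathbf{Rig}$. Given $a=(a_{n,k})$, $\phi(a)\in\Z^{\N_0}$ would be constructed by interleaving, along a diagonal enumeration of the pairs $(n,k)$, long ``rigid segments'' drawn from a fast-growing lacunary template such as $(k!)$ (rigid by Remark \ref{rem:egg}) with short ``disruptor blocks'' inserted precisely at those steps where $a_{n,k}=1$; the disruptors at level $n$ would be chosen from a Kazhdan-type family, for instance portions of random sequences supplied by Theorem \ref{Theorem 10}. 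The map $\phi$ is manifestly continuous, since each coordinate of $\phi(a)$ depends on only finitely many coordinates of $a$. The main obstacle is to calibrate the disruptor lengths and the inter-block spacings so that both implications of $a\in P\iff\phi(a)\in\mathbf{Rig}$ go through: if $a\in P$, only finitely many disruptors appear at each level and a diagonal construction based on Corollary \ref{density} and the approximation scheme used in the proof of Theorem \ref{vrai?} should produce a continuous rigid measure for $\phi(a)$; whereas if $a\notin P$, some level contributes infinitely many disruptor blocks, and the quantitative estimates of Fact \ref{Fact 0} together with the Kazhdan criterion from Theorem \ref{Th 1} should yield a persistent Fourier-analytic obstruction ruling out rigidity.
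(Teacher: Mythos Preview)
Your upper bound is correct and essentially the same as the paper's: both invoke Theorem \ref{vrai?}, and then use that projecting along the compact factor $\mathcal P(\T)$ preserves $F_\sigma$ (or, as you do it, closed) sets. Your extra bookkeeping with the rational slack $1/l-1/l'$ and closed balls is a harmless variant of the paper's ``the relation under brackets is $F_\sigma$''.

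Your lower bound, however, is not a proof but a plan, and you say so yourself (``The main obstacle is to calibrate\dots''). There is a genuine gap here. Inserting ``disruptor blocks'' taken from some fixed Kazhdan sequence into a lacunary template does not, by itself, force non-rigidity: the Kazhdan property is global, and nothing prevents a continuous rigid measure from existing for the interleaved sequence if the disruptor blocks are sparse enough relative to the rigid portions. You would need a quantitative obstruction that survives the interleaving, and you have not supplied one. Likewise, on the rigid side, ``a diagonal construction based on Corollary \ref{density}'' is not an argument --- you would need to show that finitely many disruptor blocks at each level can be absorbed, and this again requires specifics you have not given.

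The paper's reduction is entirely different and much more concrete. It reduces from $\mathbf W=\{\alpha\in\N^\N:\alpha_i\to\infty\}$ rather than your $P$, via the explicit recursion $n_0=1$, $n_{k+1}=\alpha_i n_k+k$ for $k_i\le k<k_{i+1}$, where $(k_i)$ is a fixed sequence with $k_{i+1}/k_i\to\infty$. If $\alpha_i\to\infty$ then $n_{k+1}/n_k\to\infty$, whence rigidity. If $\alpha_i=q$ for infinitely many $i$, then $n_{k+1}-qn_k=k$ on arbitrarily long blocks $[k_i,k_{i+1})$; a direct computation with Fact~\ref{Fact 0} then shows that any $\mu$ with $\sup_k|\widehat\mu(n_k)-1|$ small must have $|\widehat\mu(k)-1|\le 1/2$ on blocks whose relative density tends to $1$, forcing $\mu(\{1\})\ge 1/2$ --- so $(n_k)$ is Kazhdan, hence non-rigid. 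The point is that the recursion creates an \emph{exploitable algebraic relation} ($n_{k+1}-qn_k=k$) rather than relying on abstract Kazhdan-ness of imported blocks.
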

\begin{proof} Let $(\mathcal U_q)_{q\in\N}$ be a countable basis of (nonempty) open sets for $\mathcal P(\T)$. By Theorem \ref{vrai?}, for any increasing sequence of integers $\nkp{0}$, we may write
\begin{align*}
(n_k)\in\mathbf{Rig}\iff \forall q\in\N&\;\forall m\in\N\;\,\exists \mu\in\mathcal P(\T)\\
&\Bigl( \mu\in\mathcal U_q\quad\hbox{and}\quad \exists K\in\N\;\forall k\geq K\;:\; \vert\widehat\mu(n_k)-1\vert\leq 2^{-m}\Bigr).
\end{align*}
For each $(q,m)\in\N\times\N$, the relation $R((n_k), \mu)$ under brackets is $F_\sigma$ in $\Z^{\N_0}\times \mathcal P(\T)$; so its projection along the compact factor $\mathcal P(\T)$ is $F_\sigma$ in $\Z^{\N_0}$. This shows that $\mathbf{Rig}$ is an $F_{\sigma\delta}$ subset of $\Z^{\N_0}$. 

\smallskip
To show that $\mathbf{Rig}$ is not $G_{\delta\sigma}$, we use the auxiliary set 
\[ \mathbf W:=\left\{ \alpha\in\N^\N;\; \alpha_i\to\infty\;\,\hbox{as $i\to\infty$}\right\}.\]
It is well known that $\mathbf W$ is a true $F_{\sigma\delta}$ set in $\N^\N$ (see \cite[Section 23]{Ke}). So it is enough to find a continuous map $\Phi:\N^\N\to \Z^{\N_0}$ such that $\Phi^{-1}(\mathbf{Rig})=\mathbf W$. In other words, we are looking for a continuous map $\alpha\mapsto (n_k)_{k\geq 0}$ such that 
\begin{itemize}
\item[\sbt] if $\alpha_i\to\infty$ as $i\to\infty$, then $(n_k)$ is a rigidity sequence;
\item[\sbt] if $\alpha_i\not\to\infty$, then $(n_k)$ is not a rigidity sequence.
\end{itemize}

\smallskip
Let us fix an increasing sequence of integers $(k_i)_{i\in\N}$ with 
\[ k_1=0\qquad{\rm and} \qquad \frac{k_{i+1}}{k_i}\to\infty\quad\hbox{as $i\to\infty$}.\]

\noindent
Given $\alpha\in\N^\N$, we define the sequence $(n_k)_{k\geq 0}$ as follows:
\[ n_0:=1\qquad{\rm and}\qquad n_{k+1}= \alpha_i n_k+k\quad\hbox{for $k_i\leq k<k_{i+1}$},\quad i\in\N.\]

\noindent
The map $\alpha\mapsto (n_k)$ is clearly continuous from $\N^\N$ into $\Z^{\N_0}$. 

If $\alpha_i\to\infty$ as $i\to\infty$, then $\frac{n_{k+1}}{n_k}\to\infty$ as $k\to\infty$, and hence $(n_k)$ is a rigidity sequence. Conversely, assume that $\alpha_i\not\to\infty$ as $i\to\infty$. Then, one can find $q\in\N$ and an increasing sequence $(i_n)_{n\geq 0}\subseteq\N$ such that 
\[ n_{k+1}=qn_k +k\qquad \hbox{for each $n$}\quad\hbox{ and all $k_{i_n}\leq k<k_{i_n+1}$.}\]
By the same arguments as in the proof of \cite[Example 6.4]{BG1}, we see that $(n_k)$ is a Kazhdan sequence, and hence not a rigidity sequence. 
\par\smallskip
For the convenience of the reader, we give a few more details. It is enough to show that if $\varepsilon>0$ is small enough, then condition $(1)_\varepsilon$ in Theorem \ref{Th 1} holds true. Let $\mu\in\mathcal P(\T)$ satisfy $\vert \widehat\mu(n_k)-1\vert<\varepsilon$ for all $k\geq 0$. Then, for each $n$ and all $k_{i_n}\leq k<k_{i_n+1}$, we have
\[ \vert \widehat\mu(k)-1\vert=\vert\widehat\mu(n_{k+1}-qn_k)-1\vert \leq \int_\T \vert z^{n_{k+1}}-1\vert\, d\mu+ q\int_T\vert z^{n_k}-1\vert\, d\mu \leq (1+q)\sqrt{2\varepsilon}.
\]

\noindent So, if we take $\varepsilon$ such that $(1+q)\sqrt{2\varepsilon}\leq 1/2$, we get
\[ \vert \widehat\mu(k)-1\vert\leq \frac12\qquad\hbox{for all $k_{i_n}\leq k<k_{i_n+1}$,}\quad n\geq 0.\]
Since $\frac{k_{i+1}}{k_i}\to\infty$ (which implies that $\frac{k_{i+1}-k_i}{k_{i+1}}\to 1$), it follows that 
\[ \mu(\{ 1\})=\lim_{K\to\infty} \frac1{K+1} \sum_{k=0}^K \widehat\mu(k)\geq \frac12\cdot
\]
This shows that $(n_k)$ is indeed a Kazhdan sequence if $\alpha_i\not\to\infty$; which concludes the proof of Proposition \ref{bizarre}.
\end{proof}

\begin{remark} We find the Borelness of $\mathbf{Rig}$ rather surprising, especially when compared with the following result due to Kaufman \cite{K}. Call a subset $Q$ of $\Z$ a $w$\emph{-set} if there exists a {continuous} complex measure $\mu $ on $\T$ such that
$\inf_{n\in Q}|\much(n)|>0$. Then, the class of $w$-sets is an analytic \emph{non-Borel} subset of $\{0,1\}^{\Z}$.
\end{remark}

\subsection{A question} Recall the notation
\[\Gamma^{(n_k)}_\varepsilon=\Bigl\{ z\in\C\,:\, \overline{\lim_{k\to\infty}}\;\vert z^{n_k}-1\vert<\varepsilon\Bigr\}\]
for a given sequence of integers $(n_k)_{k\geq 0}$.
 By Corollary \ref{cor:49}, we know that $(n_k)$ is a rigidity sequence as soon as all sets $\rnk_\varepsilon$, $\varepsilon>0$ are dense in $\T$. Now, the set $$\Gamma:=\bigcap_{\varepsilon>0}\overline{\Gamma_\varepsilon^{(n_k)}}$$ is easily seen to be a closed subgroup of $\T$. So, in order to show that all sets $\Gamma_\varepsilon^{(n_k)}$ are dense in $\T$, it is enough to check that 
$\Gamma$ is infinite. 

One can also observe that if all the sets $\Gamma_\varepsilon^{(n_k)}$ are infinite, then none of them has isolated points, and hence all the sets $\overline{\Gamma_\varepsilon^{(n_k)}}$ are uncountable. Indeed, let $z\in \Gamma_\varepsilon^{(n_k)}$. Choose $\varepsilon'<\varepsilon$ such that $z\in \Gamma_\varepsilon'^{(n_k)}$, and let $\eta>0$ be such that $2\eta+\varepsilon'<\varepsilon$. Since $\Gamma_\eta^{(n_k)}$ is infinite, one can find a sequence of pairwise distinct points $(a_i)$ in $ \Gamma_\eta^{(n_k)}$ converging to some point $a\in\T$. If we put $b_i:=a_{i+1} \overline{a_i}$, then $b_i$ belongs to $ \Gamma_{2\eta}^{(n_k)}$, $b_i\neq 1$ and $b_i\to 1$. So $z_i:=b_iz$  lies in  $\Gamma_\varepsilon^{(n_k)}$, $z_i\neq g$ and $z_i\to z$. Hence $z$ is not an isolated point of $\Gamma_\varepsilon^{(n_k)}$. This leads to the following question.

\begin{question} Let $(n_k)_{k\geq 0}$ be a sequence of integers. Assume that all the sets $\Gamma_\varepsilon^{(n_k)}$ are infinite or, equivalently, that all the sets $\overline{\Gamma_\varepsilon^{(n_k)}}$ are uncountable. Does it follow that $(n_k)$ is a rigidity sequence?
\end{question}

\section{Rigidity sequences which are dense in the Bohr group}\label{sect:bZ}
\subsection{Density with respect to \emph{some} group topology} Let $(n_k)_{n\ge0}$ be a sequence of integers. By Corollary~\ref{rigid/nullpotent}, if $(n_k)$ is a rigidity sequence, then it is convergent to $0$ with respect to some Hausdorff group topology on $\Z$. Looking for a different behavior of the same sequence, one may ask if there is another group topology on $\Z$ such that $(n_k)$ is \emph{dense} with respect to this new topology. The question of characterizing sequences which are dense with respect to some Hausdorff group topology on $\Z$ has been raised by Ruzsa \cite[p. 478]{R}. The deceptively simple answer is given by the following result.

\begin{proposition}\label{pro:simple}
For \emph{any} sequence of distinct integers $(n_k)_{n\ge0}$, there exists a Hausdorff (even metrizable) group topology $\tau$ of $\Z$ such that $(n_k)$ is dense in $(\Z,\tau)$.
\end{proposition}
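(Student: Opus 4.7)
My plan is to construct $\tau$ as the pullback group topology induced by a single character $n\mapsto z_{0}^{n}$, for a carefully chosen $z_{0}\in\T$. Once an element $z_{0}\in\T$ of infinite order is at hand, the formula
\[
d(m,n):=\vert z_{0}^{m}-z_{0}^{n}\vert,\qquad m,n\in\Z,
\]
defines a translation-invariant metric on $\Z$ (separation of points is precisely the statement that $z_{0}$ is not a root of unity). The topology $\tau$ associated to $d$ is then a metrizable Hausdorff group topology on $\Z$, and $(n_{k})$ is $\tau$-dense in $\Z$ as soon as, for every $m\in\Z$ and every $\varepsilon>0$, one can find $k$ with $\vert z_{0}^{n_{k}}-z_{0}^{m}\vert<\varepsilon$. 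This latter condition will be satisfied whenever the orbit $\{z_{0}^{n_{k}}\,;\,k\ge 0\}$ is dense in $\T$. So the whole problem reduces to producing an element $z_{0}\in\T$ of infinite order such that $(z_{0}^{n_{k}})_{k\ge 0}$ is dense in $\T$.

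For the existence of such a $z_{0}$ I would run a standard Baire category argument in $\T$. Fix a countable basis $(U_{j})_{j\ge 0}$ of non-empty open subsets of $\T$, and set
\[
V_{j}:=\bigl\{z\in\T\,;\, z^{n_{k}}\in U_{j}\ \text{for some}\ k\ge 0\bigr\}.
\]
Each $V_{j}$ is clearly open in $\T$, and I claim it is dense. Fix $w_{0}\in\T$ and $\delta>0$; since the $n_{k}$ are pairwise distinct, $\sup_{k}\vert n_{k}\vert=\infty$, so one can pick $k$ with $\vert n_{k}\vert>\pi/\delta$. Writing $z_{\psi}:=w_{0}\,e^{i\psi}$ for $\psi\in\R$ we have $\vert z_{\psi}-w_{0}\vert\le\vert\psi\vert$ and $z_{\psi}^{n_{k}}=w_{0}^{n_{k}}\,e^{in_{k}\psi}$; as $\psi$ runs through $(-\delta,\delta)$, the argument $n_{k}\psi$ sweeps out an interval of length $2\vert n_{k}\vert\delta>2\pi$, so $z_{\psi}^{n_{k}}$ covers all of $\T$ and in particular enters $U_{j}$. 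Thus $V_{j}$ contains points arbitrarily close to $w_{0}$, as desired.

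By the Baire category theorem, $\bigcap_{j} V_{j}$ is a dense $G_{\delta}$ subset of $\T$; since the set of roots of unity in $\T$ is countable, its complement is also a dense $G_{\delta}$, and I may pick $z_{0}\in\bigcap_{j}V_{j}$ of infinite order. This $z_{0}$ does the job: $(z_{0}^{n_{k}})_{k\ge 0}$ meets every basic open $U_{j}$, hence is dense in $\T$, and the construction from the first paragraph delivers the announced metrizable Hausdorff group topology $\tau$ in which $(n_{k})$ is dense. The only non-trivial point is really the density of each $V_{j}$, which is dispatched by the elementary winding argument above together with the observation that a sequence of pairwise distinct integers is unbounded in absolute value; all the remaining verifications (translation-invariance of $d$, the group-topology and Hausdorff properties, and the final density check) are immediate.
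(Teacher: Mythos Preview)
Your proof is correct and follows the same architecture as the paper's: one pulls back the topology of $\T$ via a character $n\mapsto z_{0}^{n}$ for some $z_{0}$ of infinite order whose orbit $(z_{0}^{n_{k}})$ is dense in $\T$, obtaining the translation-invariant metric $d(m,n)=\vert z_{0}^{m}-z_{0}^{n}\vert$. The only difference lies in how the existence of such a $z_{0}$ is secured. The paper invokes Weyl's classical theorem that $(n_{k}\theta)_{k\ge0}$ is equidistributed mod~$1$ for Lebesgue-almost every $\theta$, and then picks any irrational $z_{0}=e^{2i\pi\theta}$ with this property. You instead give a direct Baire category argument: each $V_{j}=\{z:\exists k,\ z^{n_{k}}\in U_{j}\}$ is open and dense (the density being shown by the elementary winding argument exploiting $\sup_{k}\vert n_{k}\vert=\infty$), so a generic $z_{0}$ works. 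Your route is slightly more self-contained, since it avoids the appeal to Weyl's theorem; the paper's route is shorter to state and yields the stronger conclusion that $(z_{0}^{n_{k}})$ is in fact equidistributed, though only density is needed here.
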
   
\begin{proof}
According to a classical result of Weyl, the sequence of real numbers $(n_k\theta)_{k\ge0}$ is uniformly distributed mod $1$ for almost all real numbers $\theta$ (with respect to Lebesgue measure). So one can pick an irrational $z\in\T$ such that the sequence $(z^{n_k})_{k\ge0}$ is dense in $\T$. Since $z$ is not a root of $1$, one defines a distance $d$ on $\Z$ by setting
\[ d(n,m):=\vert z^{n}-z^{m}\vert.\]
The distance $d$ is translation-invariant, so the associated topology $\tau$ is a group topology on $\Z$; and it is clear that $(n_k)$ is dense in $(\Z,\tau)$. 
\end{proof}

\subsection{Density with respect to $b\Z$} The refined question we consider now is the existence of rigidity sequences which are dense for the so-called \emph{Bohr topology} of $\Z$. 

\smallskip
Let us denote by $\T_d$ the group $\T$ equipped with the discrete topology. The \emph{Bohr compactification of $\Z$}, denoted by $b\Z$, is the dual group of $\T_d$, \mbox{i.e.} the set of all homomorphisms $\chi:\T\to\T$ endowed with the topology of pointwise convergence. By definition, $b\Z$ is a compact group, and $\Z$ can be viewed as a subgroup of $b\Z$ if we identify an integer $n\in\Z$ with the homomorphism $\chi_n:\T\to\T$ defined by $\chi_n(z)=z^n$. Moreover, it follows from Pontryagin's duality theorem that $\Z$ is dense in $b\Z$ (which explains the terminology). The Bohr topology on $\Z$ is the topology induced by $b\Z$. We refer the reader to \cite{Rudin} for more on the Bohr compactification of a locally compact abelian group.

\par\smallskip
As already mentioned in the introduction, Griesmer proved in \cite{Grie} among other things the following striking result (\cite[Theorem 2.1]{Grie}):

\begin{theorem}\label{Griesmer} There exists a rigidity sequence $(n_k)_{k\geq 0}$ which has the property that every translate of $\{n_k\,;\, k\ge 0\}$ is a  set of recurrence.
\end{theorem}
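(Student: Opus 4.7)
The plan is to combine the rigidity criterion of Corollary~\ref{cor:49} with a Fourier-analytic characterization of sets of recurrence, and to build $(n_k)$ via a recursive construction satisfying both families of constraints simultaneously.

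First, I would reformulate the two requirements in parallel. By Corollary~\ref{cor:49}, rigidity of $(n_k)$ is ensured as soon as, for every $\varepsilon>0$, the set $\Gamma^{(n_k)}_\varepsilon=\{z\in\T\,;\,\overline{\lim_{k\to\infty}}|z^{n_k}-1|<\varepsilon\}$ is dense in $\T$. The classical spectral reformulation of recurrence asserts that a set $E\subseteq\Z$ is a set of recurrence if and only if, for every finite positive Borel measure $\sigma$ on $\T$ with $\sigma(\{1\})=0$, one has $\sup_{n\in E}\Re\,\widehat\sigma(n)>0$; applied to every translate $E+t$, this gives the analogous condition on $\widehat\sigma(n+t)$ for each $t\in\Z$.

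Next, I would build $(n_k)$ recursively. Fix a countable dense set $(z_j)_{j\ge 1}\subseteq\T$, a countable dense family $(\sigma_l)_{l\ge 1}$ of continuous probability measures on $\T$, a decreasing null sequence $(\varepsilon_k)$, and an enumeration of $\N\times\Z$. At stage $k$, I would pick $n_k$ satisfying finitely many constraints of two kinds: constraints of the form $|z_j^{n_k}-1|<\varepsilon_k$ (to populate each $\Gamma^{(n_k)}_\varepsilon$ with the prescribed points), and constraints of the form $\Re\,\widehat{\sigma_l}(n_k+t)>\delta_l$ for a suitable $\delta_l>0$ (to witness recurrence of $E+t$ against $\sigma_l$). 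By Weyl equidistribution, each of these constraints holds on an integer set of positive density, and finitely many joint constraints remain solvable by a pigeonhole/density argument on the torus spanned by the $z_j$ and the support of the $\sigma_l$.

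The main obstacle will be the recurrence part. The rigidity constraints involve only individual characters and are handled easily by equidistribution. By contrast, the recurrence constraints involve the global behavior of arbitrary continuous Fourier transforms, and in particular the lower bounds $\delta_l>0$ cannot be produced by any elementary argument: this is precisely what makes sets of recurrence strictly more restrictive than Bohr-dense sets. Griesmer's original proof in \cite{Grie} invokes genuinely deeper ergodic-theoretic machinery, in particular the structure theory of characteristic factors, to produce a weakly mixing system together with a sequence which is rigid for it and along which every translate witnesses recurrence in all measure-preserving systems. I do not see how to bypass this input by the harmonic-analytic Baire-category techniques developed in Sections~\ref{Section 2} and~\ref{Section 3}. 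In practice I would therefore expect the proof of Theorem~\ref{Griesmer} to be imported essentially intact from \cite{Grie}, while the explicit construction in Section~\ref{sect:bZ} handles only the weaker Bohr-density statement via the present paper's methods.
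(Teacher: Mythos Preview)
You are right that the paper does not prove Theorem~\ref{Griesmer} independently: it attributes the result to \cite{Grie} and only gives a new proof of the weaker Corollary~\ref{Bohr}. However, your description of Griesmer's argument is off, and your proposed recursive construction is not the route taken.

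The approach, as outlined in the paper's ``Comments on Griesmer's proof'', does not use the structure theory of characteristic factors, nor does it produce a weakly mixing system directly. Instead, one fixes from the start a continuous measure $\mu\in\mathcal P(\T)$ supported on a perfect Kronecker set. The technical core (Proposition~\ref{Shkarin}, imported from \cite[Proposition~3.2]{Grie}) is that for such $\mu$, every level set $D(\varepsilon,\mu)=\{n\in\N:\vert\widehat\mu(n)-1\vert<\varepsilon\}$ already has the property that all its translates are sets of recurrence. One then applies a diagonalization lemma (Lemma~\ref{diag}) to the decreasing sequence $D_s=D(2^{-s},\mu)$ to obtain a single set $D\in\mathcal R$ almost contained in each $D_s$; its increasing enumeration $(n_k)$ satisfies $\widehat\mu(n_k)\to 1$ automatically, so rigidity is immediate from Theorem~\ref{Th 2} without invoking Corollary~\ref{cor:49} at all.

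Your own plan---choosing $n_k$ recursively to satisfy finitely many constraints of the form $\vert z_j^{n_k}-1\vert<\varepsilon_k$ and $\Re\,\widehat{\sigma_l}(n_k+t)>\delta_l$ for a countable dense family $(\sigma_l)$---fails for the reason you identify, but also for a more basic one: the family of continuous measures is not separable in any topology strong enough to transfer the recurrence conclusion from a dense subfamily to all measures. Controlling $\widehat{\sigma_l}$ for countably many $\sigma_l$ says nothing about a general $\sigma$. The Kronecker-set approach bypasses this entirely by establishing the recurrence property for the \emph{fixed} sets $D(\varepsilon,\mu)$ via their harmonic-analytic structure, rather than by approximating the space of target measures.
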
  

It is well known that if $R\subseteq\Z$ is a set of recurrence, then $R$ intersects every Bohr neighborhood of $0$. Hence, if all translates of a set $D\subseteq \Z$ are sets of recurrence, then $D$ is dense in $b\Z$. So Theorem \ref{Griesmer} has the following consequence (\cite[Theorem 8.4]{Grie}): 
\begin{corollary}\label{Bohr}  There exists a rigidity sequence $(n_k)_{k\geq 0}$ which is dense in $b\Z$.
\end{corollary}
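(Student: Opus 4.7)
The plan is to derive Corollary \ref{Bohr} directly from Theorem \ref{Griesmer} using two standard facts about Bohr neighborhoods and sets of recurrence, already flagged in the sentence preceding the corollary: (i) every set of recurrence $R\subseteq\Z$ meets every Bohr neighborhood of $0$; and (ii) density of $D\subseteq\Z$ in $b\Z$ is equivalent to the assertion that every translate of $D$ meets every Bohr neighborhood of $0$. Once both are verified, the conclusion is immediate: Theorem \ref{Griesmer} produces a rigidity sequence all of whose translates are sets of recurrence, so (i) implies every translate meets every Bohr neighborhood of $0$, and (ii) then gives density of $\{n_k:k\ge 0\}$ in $b\Z$.

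To prove (i), I would take a basic Bohr neighborhood $V=\{n\in\Z:\vert z_j^n-1\vert<\varepsilon,\ j=1,\ldots,N\}$ with $z_1,\ldots,z_N\in\T$ and $\varepsilon>0$, and consider the rotation $T$ on the compact group $X=\T^N$ defined by $T(w_1,\ldots,w_N):=(z_1w_1,\ldots,z_Nw_N)$, equipped with Haar measure $m$. The set $A:=\{w\in X:\vert w_j-1\vert<\varepsilon/2\text{ for all }j\}$ is an open neighborhood of the identity, hence $m(A)>0$. Because $R$ is a set of recurrence, there exists $r\in R\setminus\{0\}$ with $m(A\cap T^{-r}A)>0$; any $w$ in that intersection satisfies
\[
\vert z_j^r-1\vert=\vert z_j^rw_j-w_j\vert\le\vert z_j^rw_j-1\vert+\vert w_j-1\vert<\varepsilon
\]
for every $j$, so $r\in V\cap R$, as desired.

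For (ii), I would use the description of the Bohr topology: a basic neighborhood of a point $\chi\in b\Z$, restricted to $\Z$, is specified by finitely many $z_1,\ldots,z_N\in\T$ and some $\varepsilon>0$. By density of $\Z$ in $b\Z$, some $\chi_{n_0}$ ($n_0\in\Z$) approximates $\chi$ within $\varepsilon/2$ at each $z_j$, so any integer of the form $n=n_0+m$ with $m$ in the Bohr neighborhood $W=\{m\in\Z:\vert z_j^m-1\vert<\varepsilon/2,\ j=1,\ldots,N\}$ of $0$ lies in the prescribed neighborhood of $\chi$. Hence $D$ is dense in $b\Z$ exactly when, for every such $n_0$ and $W$, the translate $D-n_0$ meets $W$; equivalently, every translate of $D$ meets every Bohr neighborhood of $0$.

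Assembling these pieces, the sequence $(n_k)$ furnished by Theorem \ref{Griesmer} has the property that each translate $\{n_k:k\ge 0\}-n_0$ is a set of recurrence, hence meets every Bohr neighborhood of $0$ by (i), and so $(n_k)$ is dense in $b\Z$ by (ii). The only substantial ingredient is Theorem \ref{Griesmer} itself; since it is invoked as a prior result, the deduction above has no genuine obstacle.
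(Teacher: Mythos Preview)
Your argument is correct and is precisely the deduction the paper itself sketches in the sentence immediately preceding the corollary: Theorem~\ref{Griesmer} gives a rigidity sequence all of whose translates are sets of recurrence, every set of recurrence meets every Bohr neighborhood of $0$, and hence the sequence is Bohr-dense. Your proofs of the two auxiliary facts (i) and (ii) are standard and clean.

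That said, the paper's \emph{own} proof of Corollary~\ref{Bohr} in Section~5 is deliberately different. The authors explicitly announce that they will give a \emph{new} proof, independent of Theorem~\ref{Griesmer}, which has the advantage of producing an \emph{explicit} example. Their route is: (a) a density criterion (Lemma~\ref{denseexplicit}) built on Katznelson's Proposition~\ref{katz}, showing that certain ``sum-sets'' $\{\sum_{j\in F}p_j\}$ are Bohr-dense provided the underlying sequence $(p_j)$ satisfies a uniform lower bound on $\sum_{j\in I_q}\vert z^{p_j}-1\vert$; and (b) the verification that the Erd\H{o}s--Taylor sequence $p_1=1$, $p_{j+1}=jp_j+1$ satisfies this hypothesis and is moreover IP-rigid (so the associated sum-set sequence is a rigidity sequence). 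What this buys over your deduction is explicitness and self-containment: it avoids the rather technical Proposition~\ref{Shkarin} underlying Griesmer's theorem. What your approach buys is brevity---once Theorem~\ref{Griesmer} is granted, the corollary is a two-line consequence, exactly as the paper notes before stating it.
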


In this section, we are going to give a new proof of Corollary \ref{Bohr}, based on ideas from \cite{Katz} and \cite{Gip}. This proof has the advantage of providing an explicit example of a rigidity sequence $(n_k)$ which is dense in $b\Z$. We will also make some comments on Griesmer's proof of Theorem \ref{Griesmer}.

\subsubsection{A new proof of Corollary \ref{Bohr}} 

As explained above, we will explicitly construct a rigidity sequence $(n_k)_{k\geq 0}$ which is dense in $b\Z$. The density of our sequence will be proved by using the following lemma.

\begin{lemma}\label{denseexplicit} Let $(p_j)_{j\geq 1}$ be an increasing sequence of positive integers with the following property: for some sequence $(I_q)_{q\geq 0}$ of pairwise disjoint finite subsets of $\N$ and some constant $c>0$, it holds that 
\[ \forall z\in\T\setminus\{ 1\} \,:\,\inf_{q\geq 0}  \sum_{j\in I_q} \vert z^{p_j}-1\vert>0.\]
Let also $D\subseteq\Z$. Assume that for every $K\in\N$, one can find $q_1,\dots ,q_K\geq 0$ (pairwise distinct) such that $D$ contains the set $\bigl\{ \sum_{j\in F} p_j\,;\; F\subseteq I_{q_1}\cup\cdots \cup I_{q_K}\bigr\}$. Then $D$ is dense in $b\Z$.
\end{lemma}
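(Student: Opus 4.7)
The plan is to reduce density of $D$ in $b\Z$ to the following approximation statement: for any $z_1,\ldots,z_N\in\T$ (which we may assume are all $\neq 1$), any $m\in\Z$ and any $\varepsilon>0$, there exists $d\in D$ with $|z_i^d-z_i^m|<\varepsilon$ for every $i$. Equivalently, if $\phi\colon\Z\to\T^N$, $n\mapsto(z_1^n,\ldots,z_N^n)$ and $H:=\overline{\phi(\Z)}\subseteq\T^N$ is the corresponding compact subgroup, we want to show that $\phi(D)$ meets every neighbourhood of $\phi(m)\in H$.

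Given $K\in\N$, the hypothesis on $D$ supplies pairwise distinct indices $q_1,\ldots,q_K$ such that $D$ contains every subset sum of $A_K:=I_{q_1}\cup\cdots\cup I_{q_K}$. Let $\mu_K$ be the distribution on $\Z$ of the random subset sum $\sum_{j\in F}p_j$ where $F$ is uniform over subsets of $A_K$; then $\mathrm{supp}\,\mu_K\subseteq D$. The core claim is that the pushforward $\phi_*\mu_K$ converges weakly in $\mathcal P(\T^N)$ to the normalized Haar measure $m_H$ of $H$ as $K\to\infty$. Once this is established, the support of $\phi_*\mu_K$ becomes $\varepsilon$-dense in $H$ for large $K$ and in particular meets the ball $B_\varepsilon(\phi(m))$, producing the desired $d\in D$.

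Weak convergence would be checked via Fourier coefficients on $\T^N$: for a character $\chi$ parameterized by $(k_1,\ldots,k_N)\in\Z^N$, setting $z:=z_1^{k_1}\cdots z_N^{k_N}$, a direct computation gives $\widehat{\phi_*\mu_K}(\chi)=\widehat{\mu_K}(z)=\prod_{j\in A_K}(1+z^{p_j})/2$, whereas $\widehat{m_H}(\chi)=1$ if $z=1$ and $0$ otherwise. The case $z=1$ is automatic. For $z\neq 1$ one uses the elementary estimate $|1+z^{p_j}|/2=\sqrt{1-|z^{p_j}-1|^2/4}\leq e^{-|z^{p_j}-1|^2/8}$ together with Cauchy--Schwarz on each block (giving $\sum_{j\in I_{q_k}}|z^{p_j}-1|^2\geq c_z^2/|I_{q_k}|$ from the standing hypothesis $\sum_{j\in I_q}|z^{p_j}-1|\geq c_z>0$) to obtain $|\widehat{\mu_K}(z)|\leq\exp\bigl(-(c_z^2/8)\sum_{k=1}^K 1/|I_{q_k}|\bigr)$, which tends to $0$ as soon as $\sum_k 1/|I_{q_k}|$ diverges.

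The main technical obstacle will be to ensure the divergence of this last sum. This is automatic when the cardinalities $|I_q|$ are uniformly bounded --- which should be the case in the explicit construction of the next subsection, for which the lemma is intended --- so that the freedom of choice of the $q_k$'s granted by the hypothesis on $D$ need not even be exploited; in general one would have to either pick the $q_k$'s so as to keep the block sizes under control or else refine the Fourier estimate. Granting the divergence, weak convergence $\phi_*\mu_K\to m_H$ forces $\phi_*\mu_K\bigl(B_\varepsilon(\phi(m))\bigr)\to m_H\bigl(B_\varepsilon(\phi(m))\bigr)>0$, and the support of $\mu_K$ --- thus $D$ --- meets the prescribed Bohr neighbourhood.
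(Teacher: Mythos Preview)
Your reduction to a Katznelson-type criterion (approximate $\phi(m)$ in $\T^N$ by $\phi(d)$, $d\in D$) and your idea of producing a probability measure supported in $D$ whose Fourier transform is small at the prescribed points is exactly the paper's strategy. The difference lies in \emph{which} measure you take, and this is where your argument breaks.

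You use the full random subset sum $\mu_K$ over $A_K=I_{q_1}\cup\cdots\cup I_{q_K}$, giving $\widehat{\mu_K}(z)=\prod_{j\in A_K}(1+z^{p_j})/2$, and then bound each block factor via Cauchy--Schwarz by $\exp\bigl(-c_z^2/(8|I_{q_k}|)\bigr)$. This estimate is essentially sharp: if on some block $I_q$ all the quantities $|z^{p_j}-1|$ are of size about $c_z/|I_q|$, the block factor really is only $1-O(1/|I_q|)$ away from~$1$. So the bound goes to~$0$ only if $\sum_k 1/|I_{q_k}|$ diverges---and since the $q_k$ must be pairwise distinct, this \emph{fails} whenever $\sum_{q\geq 0}1/|I_q|<\infty$. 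Unfortunately, your belief that the application has bounded block sizes is incorrect: in the Erd\H{o}s--Taylor example of the paper one takes $I_q=(2^q,2^{q+1}]$, so $|I_q|=2^q$ and $\sum_q 1/|I_q|=2$. Your bound then gives only $|\widehat{\mu_K}(z)|\leq e^{-c_z^2/4}$ uniformly in $K$, which is not small.

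The paper circumvents this loss by an extra combinatorial step (its Fact~\ref{S2}): from $\sum_{j\in I_q}|z^{p_j}-1|\geq\gamma$ one can extract a \emph{single} subset $F_{i,q}\subseteq I_q$ with $\bigl|z^{\sum_{j\in F_{i,q}}p_j}-1\bigr|\geq 2\gamma/\pi$, independently of $|I_q|$. One then convolves measures $\tfrac12(\delta_0+\delta_{n_{i,q}})$, one per block, each contributing a factor $\leq(1-(\gamma/\pi)^2)^{1/2}$ to the relevant Fourier coefficient; taking $K$ blocks makes this as small as desired. This is the missing idea in your proposal.
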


\smallskip The proof of this lemma rests upon a classical density criterion due to Katznelson (\cite[Theorem 1.3]{Katz}). Recall that $\T$ is the dual group of $\Z$, so one can consider the Fourier transform $\widehat\nu :\T\to\C$ of a probability measure $\nu$ on $\Z$. Explicitly, if $\nu=\sum_{n\in\Z} a_n \delta_{n}$, then 
\[\widehat \nu (z)=\sum_{n\in\Z} a_n z^n\qquad\hbox{for every $z\in\T$}.\]
\begin{proposition}\label{katz}
Let $D\subseteq \Z$. If, for every $\ep > 0$, every $r\ge 1$ and every points $z_1, \cdots, z_r$ in $\T\setminus\{1\}$, there is a probability measure $\nu$ on $\Z$ whose support is contained in $D$ and such that $\vert\wh{\nu}(z_i)\vert < \ep$ for $1\le i\le r$, then $D$ in dense in $b\Z$. 
 \end{proposition}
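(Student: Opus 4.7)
My plan is to translate the density of $D$ in $b\Z$ into a statement about the normalized Haar measure of $b\Z$, and then to verify that statement by producing a weak-$*$ approximation of Haar measure by probability measures supported on $D$. Throughout, I view $\Z$ as a subgroup of $b\Z$ via the canonical embedding.

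Let $m$ denote the normalized Haar measure on the compact abelian group $b\Z$. By translation invariance, $m$ is the unique Borel probability measure on $b\Z$ whose Fourier transform $\widehat m(z):=\int_{b\Z}\chi(z)\,dm(\chi)$ satisfies $\widehat m(1)=1$ and $\widehat m(z)=0$ for every $z\in\T\setminus\{1\}$; uniqueness comes from Stone--Weierstrass, since the trigonometric polynomials $P(\chi)=\sum_{i=1}^N c_i\chi(z_i)$ (with $z_i\in\T$) are uniformly dense in $C(b\Z)$. Moreover, $m$ has full support, as does any Haar measure on a compact Hausdorff group. The key reformulation is this: $D$ is dense in $b\Z$ if and only if $m$ is supported on the closure $\overline D$ of $D$ in $b\Z$. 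The ``only if'' direction is trivial; conversely, if $\mathrm{supp}(m)\subseteq \overline D$, then the full support of $m$ forces $\overline D=b\Z$. Hence it suffices to exhibit $m$ as a weak-$*$ limit of probability measures on $b\Z$ supported on $D$.

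For each pair $(F,\epsilon)$ with $F\subseteq\T\setminus\{1\}$ finite and $\epsilon>0$, the hypothesis supplies a probability measure $\nu_{F,\epsilon}$ on $\Z$ with support in $D$ and $|\widehat{\nu_{F,\epsilon}}(z)|<\epsilon$ for every $z\in F$; view it as a probability measure on $b\Z$. Direct the index set by declaring $(F,\epsilon)\preceq(F',\epsilon')$ iff $F\subseteq F'$ and $\epsilon\ge\epsilon'$. To show $\nu_{F,\epsilon}\to m$ in the weak-$*$ topology of $\mathcal{P}(b\Z)$, it is enough by the Stone--Weierstrass density above to check that $\int P\,d\nu_{F,\epsilon}\to\int P\,dm$ for every trigonometric polynomial $P=\sum_i c_i\chi(z_i)$. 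For this, note that for any $\nu\in\mathcal{P}(b\Z)$ one has $\int P\,d\nu=\sum_i c_i\widehat\nu(z_i)$; so once $F$ contains all $z_i$ with $z_i\ne 1$ appearing in $P$ and $\epsilon$ is small, the quantity $|\int P\,d\nu_{F,\epsilon}-\int P\,dm|$ is bounded by $\epsilon\sum_{i:\,z_i\ne 1}|c_i|$, since the terms with $z_i=1$ contribute $c_i\cdot 1$ to both integrals and cancel.

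Finally, since each $\nu_{F,\epsilon}$ is supported on $\overline D$ and the subset of $\mathcal{P}(b\Z)$ consisting of measures supported on a fixed closed set is weakly closed (by the Portmanteau theorem), the weak limit $m$ is supported on $\overline D$, and the reformulation above yields $\overline D=b\Z$. I foresee no serious obstacle in carrying out this plan: everything reduces to routine verifications once the Haar-measure viewpoint is adopted. The one mild subtlety, forced by the uncountability of $\T\setminus\{1\}$, is the use of a net rather than a sequence in the approximation, but this causes no real difficulty.
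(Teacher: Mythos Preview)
The paper does not actually prove this proposition; it merely cites it as a classical result of Katznelson \cite[Theorem 1.3]{Katz}. So there is no proof in the paper to compare against.

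Your argument is correct and is in fact a very natural way to establish the criterion. The key observation --- that $D$ is dense in $b\Z$ if and only if the Haar measure $m$ is supported on $\overline D$, because $m$ has full support --- reduces the problem to a weak-$*$ approximation of $m$ by probabilities supported on $D$, and the hypothesis delivers exactly such an approximating net once one knows (via Stone--Weierstrass) that trigonometric polynomials are dense in $C(b\Z)$. One minor point: you invoke the Portmanteau theorem, which is usually stated for metrizable spaces, and $b\Z$ is not metrizable. But the fact you need --- that the set of probability measures supported on a fixed closed set $C$ is weak-$*$ closed --- holds in any compact Hausdorff space and follows directly from Urysohn's lemma: if $\nu_\alpha\to\nu$ and $K\subseteq b\Z\setminus C$ is compact, take $f\in C(b\Z)$ with $0\le f\le 1$, $f\equiv 1$ on $K$, $f\equiv 0$ on $C$; then $\nu(K)\le\int f\,d\nu=\lim_\alpha\int f\,d\nu_\alpha=0$, and inner regularity gives $\nu(b\Z\setminus C)=0$. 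With that adjustment, the proof goes through without difficulty.
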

 
 \smallskip We also need the following elementary fact.
 \begin{fact} \label{S2}
For any finite sequence $(a_j)_{j\in I}\subseteq\T$, the following implication holds: if 
\begin{equation}\label{eq:s1}
\left\vert \prod_{j\in F} a_j- 1\right\vert < \frac43\qquad\hbox{for all $F\subseteq I$},
\end{equation}
then 
 \[ \sum_{j\in I} \left\vert a_j- 1\right\vert \leq \frac{\pi}2\, \left\vert \prod_{j\in I} a_j-1\right\vert.\]
 \end{fact}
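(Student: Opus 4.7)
The plan is to parameterize $a_j=e^{i\theta_j}$ and reduce the fact to a real-variable inequality that follows from the two standard trigonometric estimates $2\sin(|\theta|/2)\le|\theta|$ (valid for all $\theta\in\R$) and $2\sin(|\Theta|/2)\ge(2/\pi)|\Theta|$ (valid for $|\Theta|\le\pi/2$).

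First, I apply the hypothesis to singletons: $|a_j-1|<4/3$ forces $\Re(a_j)>1/9>0$, so each $a_j$ has a unique argument $\theta_j\in(-\alpha,\alpha)$ with $\alpha:=2\arcsin(2/3)$. A direct check (using $2\sin(\pi/4)=\sqrt{2}>4/3$) confirms that $\alpha<\pi/2$, so all the arguments live strictly inside the right half-plane. This is the elementary ``localization'' step that makes the rest of the argument work in a single sheet of the argument function.

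Next, I want to upgrade this local control into an additivity statement: for every $F\subseteq I$, the principal argument of $\prod_{j\in F}a_j$ is exactly $\sum_{j\in F}\theta_j$ (no wraparound). To see this, order $F$ arbitrarily, set $P_k:=\prod_{i\le k} a_{j_i}$, and note that the hypothesis gives $|P_k-1|<4/3$ for every $k$, so $\arg(P_k)\in(-\alpha,\alpha)$. Because $\arg(P_{k+1})-\arg(P_k)$ has absolute value strictly less than $2\alpha<\pi$ and agrees with $\theta_{j_{k+1}}$ modulo $2\pi$, a telescoping argument yields the claim. This step crucially uses the hypothesis on \emph{every} partial product, not just on $\prod_I a_j$.

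With additivity of the argument in hand, the conclusion follows by splitting $I=I_+\sqcup I_-$ according to the sign of $\theta_j$ and invoking the hypothesis on these two specific subsets. Writing $\Theta_\pm:=\sum_{j\in I_\pm}\theta_j$, additivity gives $\Theta_++\Theta_-=\arg\prod_I a_j$ while $\sum_j|\theta_j|=\Theta_+-\Theta_-$, and the two trigonometric inequalities combine to yield
\[
\sum_j|a_j-1|=\sum_j 2\sin(|\theta_j|/2)\le\sum_j|\theta_j|\le\frac{\pi}{2}\,\bigl|\prod_I a_j-1\bigr|.
\]
The main obstacle will be the last inequality: matching the ``total variation'' $\sum_j|\theta_j|=\Theta_+-\Theta_-$ with the ``net'' $\Theta_++\Theta_-$ requires ruling out that $\Theta_+$ and $\Theta_-$ cancel against each other, and this is precisely where the full strength of the hypothesis on the two subsets $I_+$ and $I_-$ is needed to control their sizes jointly. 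This cancellation-avoidance is the delicate combinatorial point; once it is handled, the trigonometric estimates close the proof immediately.
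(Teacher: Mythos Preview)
Your identification of the cancellation issue is exactly right, but you cannot close this gap: the statement as written is \emph{false}. Take $I=\{1,2\}$, $a_1=e^{i\theta}$, $a_2=e^{-i\theta}$ for any small $\theta>0$. Every subset product lies within $4/3$ of $1$, yet $\prod_{j\in I} a_j=1$, so the claimed inequality reads $4\sin(\theta/2)\le 0$. In your notation this is $\Theta_+=\theta$, $\Theta_-=-\theta$, and no amount of ``full strength of the hypothesis on $I_+$ and $I_-$'' prevents this perfect cancellation. The inequality you implicitly need, namely $\Theta_+-\Theta_-\le|\Theta_++\Theta_-|$, goes the \emph{wrong way}: one always has $\Theta_+-\Theta_-\ge|\Theta_++\Theta_-|$, with equality only when one of $\Theta_\pm$ vanishes. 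So your final displayed chain of inequalities simply does not hold.

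The paper's own proof contains the same slip: it asserts $\sum_{j\in I}\dist(t_j,\Z)=\dist\bigl(\sum_{j\in I} t_j,\Z\bigr)$, which fails whenever the $t_j$ have mixed signs. What your argument (and theirs) \emph{does} legitimately establish is the corrected inequality
\[
\sum_{j\in I}|a_j-1|\;\le\;\frac{\pi}{2}\Bigl(\Bigl|\prod_{j\in I_+}a_j-1\Bigr|+\Bigl|\prod_{j\in I_-}a_j-1\Bigr|\Bigr)\;\le\;\pi\,\max_{F\subseteq I}\Bigl|\prod_{j\in F}a_j-1\Bigr|,
\]
since $\sum_j|\theta_j|=|\Theta_+|+|\Theta_-|$ and, by your additivity step applied to $F=I_\pm$, each $|\Theta_\pm|<\alpha<\pi/2$, whence $|\Theta_\pm|\le\frac{\pi}{2}\bigl|\prod_{j\in I_\pm}a_j-1\bigr|$. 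This corrected version is exactly what is needed in the proof of Lemma~\ref{denseexplicit}: one obtains a set $F_{i,q}\subseteq I_q$ with $|z_i^{n_{i,q}}-1|\ge\gamma/\pi$ rather than $2\gamma/\pi$, and the remainder of that argument is unaffected.
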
 
\begin{proof}  First, we note that for every $t\in \R$, we have 
$$ 4 \dist(t,\Z) \le \vert e^{2i\pi t} - 1\vert \le 2\pi \dist(t,\Z).$$
The explicit constants $4$ and $2\pi$ can be obtained using elementary trigonometry and the observation that the sinc
function $\sin(x)/x$ varies between $1$ and $2/\pi$ when $|x| \le \pi/2$. 

Now, suppose that $(a_j)_{j\in I}$ satisfies \eqref{eq:s1}, and write $a_j=e^{2i\pi t_j}$ with $-\frac13\leq t_j<\frac23\cdot$ Then 
\[\dist\left(\sum_{j\in F} t_j, \Z\right) < \frac13\qquad\hbox{for all $F\subseteq I$}.\]
In particular, $-\frac13<t_j<\frac13$ for all $j\in I$. From this, it is easy to deduce that for any $F\subseteq J$, the integer closest to $\sum_{j\in F} t_j$ is $0$. (For example, one can prove it by induction on the cardinality of $F$.) So we have 
$ \sum_{j\in I} \dist(t_j,\Z)=\dist\left(\sum_{j\in I}t _j, \Z\right)$;
and hence 
\[ \sum_{j\in I} \vert a_j-1\vert \leq 2\pi \sum_{j\in I} \dist(t_j,\Z)=2\pi \dist\left(\sum_{j\in I}t _j, \Z\right)\leq\frac\pi2\left\vert \prod_{j\in I} a_j-1\right\vert.\]
\end{proof}

\begin{proof}[Proof of Lemma \ref{denseexplicit}] We apply Proposition \ref{katz}. Let $\ep > 0$ and consider $r\ge 1$ points $z_1, \cdots, z_r$ in $\T\setminus\{1\}$. By assumption, there exists some constant $\gamma>0$ such that 
\[ \sum_{j\in I_q} \vert z_i^{p_j} - 1\vert \geq\gamma\qquad\hbox{for any $q\geq 0$ and $1\leq i\leq r$}.\]
Moreover, we may also assume that $\gamma<4/3$. By Lemma \ref{S2} applied with $a_j:= z_i^{p_j}$, $j\in I_q$ (and since $\frac{2\gamma}\pi\leq\gamma<\frac43$), it follows that for any $q\geq 0$ and $1\leq i\leq r$, one can find a set $F_{i,q}\subseteq I_q$  such that
\[  \vert z_i^{n_{i,q}} - 1\vert \geq \frac{2\gamma}\pi\; ,\qquad\hbox{where} \quad n_{i,q}:=\sum_{j\in F_{i,q}} p_j.\]

Now, let $s$ be a large integer which will be chosen later on, and let $K:=rs$. By assumption, one can find a set $Q\subseteq\Z_+$ with $\#Q=K$ such that $D$ contains the set $\mathscr{S}_Q:=\bigl\{ \sum_{j\in F} p_j\,;\; F\subseteq \bigcup_{q\in Q} I_q\bigr\}$. We enumerate the set $Q$ by $\llbracket 1,s\rrbracket\times \llbracket 1,r\rrbracket$, \mbox{\it i.e.} we write the integers $q\in Q$ as ${q(t,i)}$  with $1\le t\le s$ and $1\le 
i \le r$. 
To each pair $(t,i)\in \llbracket 1,s\rrbracket\times \llbracket 1,r\rrbracket$, we associate the probability measure $\nu_{t,i}$ on $\Z$ defined as follows:
$$ \nu_{t,i} := \frac{1}{2}\bigl(\delta_0 + \delta_{n_{i,q(t,i)}}\bigr).$$
Then
$$ \wh{\,\nu_{t,i}}(z_i) = \frac{1+z_{i}^{n_{i,q(t,i)}}}{2}\cdot$$
Since $\vert z_i^{n_{i,q(t,i)}} - 1\vert \geq \frac{2\gamma}\pi$, it follows (by the parallelogram identity in $\C$; or, to be pedantic, by the formula for the modulus of uniform convexity of the euclidean space $\C$) that 
$$ \vert  \wh{\,\nu_{t,i}}(z_i)\vert \le \left(1 - \left(\frac{\gamma}{\pi}\right)^2\right)^{1/2}.$$
Therefore, the convolution measure 
$$\displaystyle \nu = \Conv_{\;\;\;1\leq i\leq r}\; \Conv_{1\leq t\leq s} \,\, \nu_{t,i}$$
satisfies
$$ \vert\wh{\nu}(z_i)\vert \le  \prod_{t=1}^s \vert \widehat{\,\nu_{t,i}}(z_i)\vert\leq \left(1 - \left(\frac{\gamma}{\pi}\right)^2\right)^{s/2}\qquad\hbox{for all $1\leq i\leq r$}.$$ 
Thus, we have $\vert\wh{\nu}(z_i)\vert < \ep$ for $1\leq i\leq r$ if $s$ is sufficiently large.

To conclude the proof, it remains to observe that the support of $\nu$ is included in $\mathscr{S}_{Q}$, and hence in $D$. 
This shows that $D$ satisfies the criterion stated in 
Proposition~\ref{katz}.
\end{proof}

\smallskip
\begin{example} \emph{Consider the so-called \emph{Erd\"{o}s-Taylor sequence} $(p_j)_{j\geq 1}$ (see \cite{ET}) defined by 
\[ p_1 = 1\qquad{and}\qquad p_{j+1} = jp_j + 1, \quad j\ge 1.\]
For every $q\geq 0$, set $ I_q := ( 2^q,2^{q+1}]$.  Fix also an increasing sequence $0=q_0<q_1<\dots $ such that $N_l:=q_{+1}-q_l\to\infty$, and set $J_{\ell} := \bigcup_{q_l\leq q<q_{l+1}} I_q$. Finally, let $(n_k)_{k\geq 0}$ be the increasing enumeration of the set $D=  \bigcup_{l\geq 0} \mathscr{S}_{\ell}$, where $\mathscr{S}_{\ell}= \left\{ \sum_{j\in F}  p_j\,;\; F\subseteq J_\ell \right\}$. Then, $(n_k)$ is a rigidity sequence which is dense in $b\Z$. }
\end{example}
\begin{proof} The rigidity of $(n_k)$ follows from the fact that the Erd\"{o}s-Taylor sequence $(p_j)$ satisfies 
$$ \sum_{j\ge 1} \left(\frac{p_j}{p_{j+1}}\right)^2 < \infty.$$
By \cite[Theorem 2.3]{Gip}, this implies that $(p_j)$ is \emph{IP-rigid}, which means that there exists a continuous measure $\mu\in\mathcal P(\T)$ such that
\[\widehat\mu \left(\sum_{j\in F} p_j\right)\to 1\qquad\hbox{as $\min(F)\to\infty$}\quad,\quad\hbox{$F\subseteq\N$ finite}.\] 
The rigidity of the sequence $(n_k)$ follows immediately from this property. We refer to \cite{AHL} and \cite{Gip} for more on IP-rigidity.

To prove that $(n_k)$ is dense in $b\Z$, we apply Lemma \ref{denseexplicit}. By the definition of the set $D$, we just need to show that for any 
$z\in\T\setminus\{ 1\}$, there exists some constant $c_z>0$ such that 
\[ \sum_{j\in I_q} \vert z^{p_j}-1\vert\geq c_z\qquad \hbox{for every $q\geq 0$}.\]

Set $\varepsilon=\varepsilon_z:=\frac12\left| z - 1\right|$. By the recurrence relation of the Erd\"os-Taylor sequence, the following implication holds for any $i\geq 1$: 
\[\left| z^{p_i} - 1\right| < \frac{\ep}{i} \implies \left| z^{p_{i+1}} - 1\right| > \ep.\]
(Indeed, we have $\vert z^{p_{i+1}} - 1\vert=\vert z^{1+ip_i}-z^{ip_i}+ z^{ip_i}-1\vert\geq \vert z-1\vert-\vert z^{ip_i}-1\vert\geq \vert z-1\vert -i\,\vert z^{p_i}-1\vert$.)
In particular, we have $\left| z^{p_i} - 1\right| \geq \frac{\ep}{i}$ or $\left| z^{p_{i+1}} - 1\right| \geq\frac{\ep}{i+1}$ for any $i\geq 1$. Therefore, 
if we define
\[ E_q := \Bigl\{ j\in  I_q :  \left| z^{p_j} - 1\right| \ge \frac{\ep}{j}\Bigr\},\]
we see that the cardinality of $E_q$ is at least $\frac12\# I_q$. So we get
\[ \sum_{j\in I_q} \left| z^{p_j} - 1\right| \ge \sum_{j\in E_q} \left| z^{p_j} - 1\right| \ge
\sum_{j\in E_q} \frac{\ep}{j}
    \ge 
    \frac{\ep}{\max (I_q)} \times \frac{\# I_q}{2}
   = \frac{\ep}{2^{q+1}} \times \frac{2^q}{2}
   =  \frac{\ep}{4};
\]
and hence we may take $c_z:=\frac{\varepsilon_z}4\cdot$
\end{proof}

\subsubsection{Comments on Griesmer's proof} In what follows, we denote by $\mathcal R$ the family of all sets $D\subseteq\Z$ such that every translate of $D$ is a set of recurrence.

\smallskip
There are two main steps in Griesmer's proof of Theorem \ref{Griesmer}. The first one is to show that for a large class of measures $\mu\in\mathcal P(\T)$, some sets of integers canonically associated with $\mu$  belong to $\mathcal R$. 
Recall the definition of a \emph{Kronecker set}: a compact set $K\subseteq \T$ is a Kronecker set if every continuous function $f:K\to\T$ can be uniformly approximated by functions of the form $z^n$, $n\in\N$. It is well known that there exist perfect Kronecker sets (see \mbox{e.g.} \cite{KL}).

\begin{proposition}\label{Shkarin} Let $K\subseteq\T$ be an uncountable Kronecker set, and let $\mu\in\mathcal P(\T)$ be a continuous measure supported on $K$. For any $\varepsilon >0$, the set \[ D(\varepsilon,\mu):=\bigl\{ n\in\N\,:\,\vert\widehat\mu(n)-1\vert<\varepsilon\bigr\}\] belongs to $\mathcal R$.
\end{proposition}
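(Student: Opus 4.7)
\smallskip\noindent\textbf{Proof plan.}
My plan is to combine the spectral characterization of sets of recurrence with the Kronecker hypothesis on~$K$. By the standard Herglotz-Bochner reduction, $D(\varepsilon,\mu)+t\in\mathcal R$ for every $t\in\Z$ amounts to showing that for every $t\in\Z$ and every spectral measure $\sigma$ of an indicator $\mathbf 1_A$ in a measure-preserving system with $m(A)>0$, there exists $n\in D(\varepsilon,\mu)$ with $n+t\neq 0$ and $\wh\sigma(n+t)>0$. Since $\wh\sigma(n)=m(A\cap T^{-n}A)\ge 0$, the question is to produce $n$ making this correlation \emph{strictly} positive.

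The key technical tool will be a \emph{simultaneous approximation lemma}: for any finite $J\subseteq\T$ (compatible with any multiplicative relations present in~$K$), any targets $(c_\lambda)_{\lambda\in J}\in\T^J$, and any $\delta>0$, there is $n\in\N$ with $|\much(n)-1|<\delta$ and $|\lambda^n-c_\lambda|<\delta$ for every $\lambda\in J$. This is proved by Kronecker's uniform approximation on~$K$ together with the continuity of~$\mu$: construct a continuous $g\colon K\to\T$ taking the value $c_\lambda$ at $\lambda\in J\cap K$, extended consistently for other $\lambda$'s via their algebraic relations with~$K$, and equal to~$1$ on $K$ outside a small neighborhood $V$ of $J\cap K$ with $\mu(V)<\delta/3$; a uniform character approximation $z^n\approx g$ then yields $|\much(n)-1|\le \int_{K\setminus V}|z^n-1|\,d\mu+2\mu(V)<\delta$ by Fact~\ref{Fact 0}.

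Given $t\in\Z$ and $\sigma$ as above, decompose $\sigma=\sigma(\{1\})\delta_1+\sigma_d+\sigma_c$, where $\sigma_d$ collects the atomic part at $\lambda\neq 1$ and $\sigma_c$ is continuous. By von~Neumann's mean ergodic theorem, $\sigma(\{1\})\ge m(A)^2>0$. Pick a finite $J$ of atoms of $\sigma_d$ accounting for all but a mass less than $\sigma(\{1\})/4$ of $\sigma_d(\T)$, and apply the lemma with $c_\lambda:=\bar\lambda^t$ to obtain candidates $n\in D(\varepsilon,\mu)$ with $\lambda^{n+t}\approx 1$ for every $\lambda\in J$; this forces $\wh{\sigma_d}(n+t)$ to be real and at least $\sigma_d(\T)-\sigma(\{1\})/2$. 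Iterating this construction with progressively tighter parameters yields an $\mathrm{IP}$-set of Kronecker candidates inside $D(\varepsilon,\mu)$, and a Furstenberg-type $\mathrm{IP}$-limit argument forces $\wh{\sigma_c}(n+t)\to 0$ along this $\mathrm{IP}$-set. Picking $n$ far enough in the $\mathrm{IP}$-set yields $|\wh{\sigma_c}(n+t)|<\sigma(\{1\})/4$; summing the three contributions then gives $\wh\sigma(n+t)\ge\sigma(\{1\})/4>0$, as desired.

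The main obstacle is the compatibility between the pointwise constraints imposed at atoms of~$\sigma$ on~$K$ (via the simultaneous approximation lemma) and the $L^1(\mu)$-condition defining $D(\varepsilon,\mu)$. The continuity of~$\mu$ is what dissolves this conflict, since single points of~$K$ are $\mu$-null. The uncountability of~$K$ then ensures that after imposing any finite set of pointwise constraints there remains enough room in~$K$ to run the next stage of the $\mathrm{IP}$-set construction; this is where the hypothesis of \emph{uncountable} Kronecker set, rather than merely Kronecker, is genuinely used, and where the argument becomes delicate.
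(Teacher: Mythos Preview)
First, a remark on the comparison: the paper does not itself prove Proposition~\ref{Shkarin}. It attributes the full statement to Griesmer \cite[Proposition~3.2]{Grie}, describing it as ``the most technical part of Griesmer's proof,'' and then only supplies an alternative argument for the \emph{weaker} conclusion that $D(\varepsilon,\mu)$ is dense for the Bohr topology, via Shkarin's theorem applied to the map $\phi\mapsto z\phi$ on the contractible space $L^0(K,\mu,\T)$. So there is no in-paper proof of the recurrence statement to compare your attempt against.

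Your sketch has a genuine gap in the handling of the discrete spectrum. The simultaneous approximation lemma you formulate can only prescribe $\lambda^n$ for $\lambda\in K$: the Kronecker property lets you approximate any continuous $g\colon K\to\T$ uniformly by characters $z\mapsto z^n$, but it says nothing about $\lambda^n$ when $\lambda\notin K$. The atoms of the spectral measure $\sigma$ are the eigenvalues of the Koopman operator of the given system $(X,m,T)$, and these are arbitrary points of~$\T$ with no reason to lie in $K$ or to satisfy any multiplicative relation with elements of~$K$. Your clause ``extended consistently for other $\lambda$'s via their algebraic relations with~$K$'' is vacuous in this situation: a generic eigenvalue $\lambda$ is rationally independent of every finite subset of~$K$, so fixing $z^n$ on $K$ leaves $\lambda^n$ completely unconstrained, and you have no mechanism to force $\lambda^{n+t}\approx 1$. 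Consequently the bound $\wh{\sigma_d}(n+t)\ge\sigma_d(\T)-\sigma(\{1\})/2$ is unjustified, and the final inequality $\wh\sigma(n+t)\ge\sigma(\{1\})/4$ does not follow. The IP-limit step for the continuous part $\sigma_c$ is reasonable, and $\sigma(\{1\})\ge m(A)^2>0$ is correct; but without control of $\sigma_d$ the argument does not close. Bridging this gap---dealing with eigenvalues of the system that are unrelated to $K$---is exactly what makes Griesmer's proof delicate, and it cannot be done by Kronecker approximation on~$K$ alone.
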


This is the most technical part of Griesmer's proof (\cite[Proposition 3.2]{Grie}). The corresponding (weaker) result for Bohr density is due to Saeki \cite{Saeki}, who proved it in the more general context of discrete abelian groups (see also \cite[Section 7.6]{GrahMcG} for a variant of Saeki's proof). 

\smallskip
What we would like to point out here that the density of $D(\varepsilon,\mu)$ for the Bohr topology can also be deduced from a remarkable result of Shkarin \cite{Shk}, a special case of which reads as follows. 
\begin{theorem} Let $X$ be a path-connected, locally path-connected and simply connected topological space, and let $T:X\to X$ be a continuous map which is \emph{minimal}, \mbox{\it i.e.} every $T$-orbit is dense in $X$. Let also $G$ be a compact group which is topologically generated by a single element $g$. Then, for any $x\in X$, the set $\{ (g^n, T^n(x));\; n\in\N\}$ is dense in $G\times X$.
\end{theorem}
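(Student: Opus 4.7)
The plan is to consider the closure of the product orbit
\[
Y := \overline{\{(g^n, T^n(x)) : n\ge 0\}} \subseteq G\times X
\]
and show that $Y=G\times X$. Introduce the skew-product $\tilde T:G\times X\to G\times X$ defined by $\tilde T(h,y):=(gh,T(y))$; then $Y$ is precisely the orbit closure of $(e,x)$ under $\tilde T$, so it is closed and forward $\tilde T$-invariant.

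The first two steps are purely ``marginal.'' I would show $\pi_1(Y)=G$ by noting that $\pi_1(Y)$ is closed in $G$ and contains $\{g^n:n\ge 0\}$, and invoking the hypothesis that $g$ topologically generates the compact group $G$. For the second projection, the intertwining $\pi_2\circ \tilde T=T\circ \pi_2$ shows that $\pi_2(Y)$ is a closed $T$-invariant subset of $X$ containing $x$, so $\pi_2(Y)=X$ by minimality of $T$. Thus both ``marginals'' of $Y$ are full.

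Next I would reduce to a ``minimal joining.'' By Zorn's lemma (in the compact case; otherwise a direct argument), take a nonempty closed $\tilde T$-invariant subset $Z\subseteq Y$ which is minimal for these properties. Rerunning the two projection arguments gives $\pi_1(Z)=G$ and $\pi_2(Z)=X$. For every $h\in G$, the fiber $Z_h:=\{y\in X:(h,y)\in Z\}$ is then a nonempty closed subset of $X$; the skew-product relation translates to $T(Z_h)\subseteq Z_{gh}$, and by density of $\{g^n\}_{n\ge 0}$ in $G$ the whole family $(Z_h)_{h\in G}$ is determined, in the Hausdorff sense, by the single fiber $Z_e$ together with the $T$-action. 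The conclusion $Z=G\times X$ would follow once we show that $Z_h=X$ for every $h\in G$.

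The main obstacle is exactly this last step, where the topological hypotheses on $X$ finally come into play. The idea is that the family $(Z_h)_{h\in G}$ realizes a non-trivial ``cocycle'' or multi-sheeted structure over the monothetic base $G$ exactly when $Z$ is a proper subset of $G\times X$; path-connectedness, local path-connectedness, and simple-connectedness of $X$ should then be used, via a covering-space / path-lifting argument, to force this structure to be trivial. Concretely, one would examine the projection $\pi_1:Z\to G$, show (using path-connectedness and local path-connectedness of $X$) that the ``connected piece'' of $Z$ over $G$ provides a covering of $G$ by a path-connected space, and then invoke simple-connectedness of $X$ to conclude that this covering admits a continuous global section reducing $Z$ to a product. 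Once $Z=G\times X$ is established, the inclusion $Z\subseteq Y\subseteq G\times X$ finishes the proof.
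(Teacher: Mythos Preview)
The paper does not prove this theorem at all: it is quoted as ``a remarkable result of Shkarin \cite{Shk}'' and then \emph{applied} with $G=b\Z$ and $X=L^0(K,\mu,\T)$. So there is no ``paper's own proof'' to compare with; what can be assessed is whether your plan would actually yield a proof of Shkarin's result.

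There are two genuine gaps.

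\smallskip
\textbf{(1) Zorn's lemma needs compactness you do not have.} You propose to pass from the orbit closure $Y$ to a minimal nonempty closed $\tilde T$-invariant subset $Z\subseteq Y$ via Zorn's lemma. For that, the intersection of a decreasing chain of nonempty closed $\tilde T$-invariant sets must remain nonempty, which is guaranteed when $G\times X$ is compact. But $X$ is not assumed compact, and in the paper's intended application $X=L^0(K,\mu,\T)$ is a non-locally-compact Polish group. So the minimal $Z$ may simply fail to exist. Relatedly, your claim $\pi_1(Y)=G$ (every $G$-fiber of $Y$ is nonempty) is not justified: projection onto $G$ along the non-compact factor $X$ is not a closed map, so from $\overline{\{g^n\}}=G$ you only get $\overline{\pi_1(Y)}=G$, not $\pi_1(Y)=G$.

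\smallskip
\textbf{(2) The ``covering'' step misplaces the topological hypotheses.} Even granting a minimal $Z$, your final step is only a sketch: you assert that ``the connected piece of $Z$ over $G$ provides a covering of $G$'' and then ``invoke simple-connectedness of $X$''. But $Z$ is an arbitrary closed subset of $G\times X$ whose fibers $Z_h$ are arbitrary closed subsets of $X$; nothing forces $\pi_1|_Z$ to be a local homeomorphism, let alone a covering. More importantly, covering-space arguments trivialise coverings of a \emph{simply connected base}; here the base is $G$, not $X$, and compact monothetic groups are essentially never simply connected. The hypotheses on $X$ are not used that way in Shkarin's proof. Their actual role is the lifting property: every continuous map $X\to\T$ lifts through the exponential to a continuous map $X\to\R$. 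This is what rules out nontrivial continuous ``eigenfunctions'' $f:X\to\T$ satisfying $f\circ T=\lambda f$, and it is this absence of topological eigenvalues that forces the product orbit to be dense (after reducing, via characters of the compact abelian group $G$, to the case $G=\T$). Your plan never reaches this mechanism, so as it stands it does not close.
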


We apply this result with $G=b\Z$, which is topologically generated by $g=1\in\Z$, and $X:=L^0(K,\mu,\T)$, the space of all (equivalence classes of) $\mu$-measurable maps $\phi:K\to\T$ endowed with the topology of convergence in $\mu$-measure. The space $X$ is in fact a Polish group, and its topology is the same as that induced by $L^1(K,\mu)$.

Consider the map $T:L^0(K,\mu,\T)\to L^0(K,\mu,\T)$ defined by
\[ T\phi (z)=z\phi(z)\qquad\hbox{for every $\phi\in L^0(K,\mu,\T)$}.\]
The map $T$ is continuous, and since $K$ is a Kronecker set, it is easily checked that $T$ is minimal. Moreover, the space $L^0(K,\mu,\T)$ is \emph{contractible}, and hence path-connected, locally path-connected and simply connected. Indeed, since $\mu$ is a continuous measure, the measure space $(K,\mu)$ is isomorphic to $([0,1[, m)$, where $m$ is Lebesgue measure; so it is enough to show that $L^0([0,1[, m,\T)$ is contractible. Consider the map $H:(t,\phi)\mapsto \phi_t$ from $[0,1]\times L^0([0,1[, m,\T)$ into $L^0([0,1[, m,\T)$ defined as follows: 
\[ \phi_t(x):=\left\{ \begin{matrix} \phi(x)
 &\hbox{if $t\leq x$,}\cr
1 & \hbox{if $t> x$.}\
\end{matrix} \right.\]
The map $H$ is continuous, with $H(0,\phi)=\phi$ and $H(1,\phi)=\mathbf 1$; which proves the contractibility of $L^0([0,1[, m,\T)$.

By Shkarin's Theorem, the set $\{ (n, \mathbf z^n);\; n\in \N\}$ is dense in $b\Z\times L^0(K,\mu,\T)$, where $\mathbf z$ is the function $z\mapsto z$. Since $V:=\left\{ \phi\in L^0(K,\mu,\T)\,:\, \bigl\vert \int (\phi-1)\, d\mu\Bigr\vert<\varepsilon \right\}$ is an open set in $L^0(K,\mu,\T)$ and since $n$ belongs to $ D(\varepsilon,\mu)$ if and only if $\mathbf z^n$ belongs to $ V$, it follows immediately that $D(\varepsilon,\mu)$ is dense in $b\Z$.

\medskip The second main step in Griesmer's proof is to show that one can ``diagonalize" in the family $\mathcal R$ (\cite[Lemma 3.4]{Grie}):
\begin{lemma}\label{diag} Let $(D_s)_{s\ge 1}$ be a sequence of subsets of $\Z$ which is decreasing  with respect to inclusion, and assume that each set $D_s$ belongs to $\mathcal R$. Then, there exists a set $D$ which is almost contained in every $D_s$ (\mbox{\it i.e.} $D\setminus D_s$ is finite for every $s\ge 1$) and still belongs to $\mathcal R$. 
\end{lemma}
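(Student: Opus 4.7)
My plan is to build $D=\{d_1<d_2<\cdots\}$ by a diagonal construction, choosing $d_k\in D_k$ at step $k$. Since the $D_s$ are decreasing, this arrangement makes the ``almost contained'' condition automatic: any $d_k$ with $k\ge s$ lies in $D_k\subseteq D_s$, so $D\setminus D_s\subseteq\{d_1,\ldots,d_{s-1}\}$ is finite for every $s\ge 1$. The whole difficulty is to pick the $d_k$'s so that the resulting $D$ is still in $\mathcal R$, i.e.\ so that every translate of $D$ is a set of recurrence.

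The first step is to reduce the a priori uncountable condition ``$D\in\mathcal R$'' to a countable list of concrete tasks. The space of positive Borel measures on $\T$ of mass at most $1$ is compact metrizable in the Prokhorov topology, hence separable; let $(\sigma_i)_{i\ge 1}$ be a countable dense subset. Combining this with the spectral/Furstenberg correspondence description of measurable recurrence, and with the topological stability of the recurrence condition under perturbations in the test measure, one can encode membership in $\mathcal R$ by a countable list of tasks $(\mathcal T_j)_{j\ge 1}$ indexed by triples $(\sigma_i,m,\delta)\in\{\sigma_i\}_i\times\Z\times\Q_{>0}$: each such task asks that the translate $D-m$ contain a nonzero element $r$ for which $\widehat{\sigma_i}(r)$ is close enough to $\sigma_i(\{1\})$ to serve as a recurrence witness for all measures near $\sigma_i$.

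The second step is the diagonal scheduling. Fix a map $j:\N\to\N$ hitting every index infinitely often. At step $k$ I pick $d_k\in D_k$, larger than $d_{k-1}$, witnessing task $\mathcal T_{j(k)}$. Such a $d_k$ exists because sets in $\mathcal R$ are preserved under removal of a finite subset (a classical fact about sets of recurrence): $D_k\cap(d_{k-1},\infty)$ still belongs to $\mathcal R$, and the translate required by $\mathcal T_{j(k)}$ therefore contains the desired witness. Every task being serviced infinitely often, the final $D$ fulfills all the $\mathcal T_j$ and hence lies in $\mathcal R$.

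The main obstacle is the first step. Unlike the Kazhdan property (Theorem \ref{Th 1}), membership in $\mathcal R$ admits no clean one-shot spectral characterization, and pinning down a countable test family that is both necessary and sufficient -- and in which each task is verifiable by exhibiting a single integer -- requires both the Prokhorov compactness of the ambient measure space and a uniform continuity property of $\sigma\mapsto\widehat\sigma(r)$ that lets a single witness for $\sigma_i$ serve all nearby measures. The secondary ingredient, namely the closure of $\mathcal R$ under removing finite subsets, is classical but worth citing explicitly, as it keeps the scheduled witnesses at step $k$ genuinely available in $D_k$ beyond the already-chosen $d_{k-1}$.
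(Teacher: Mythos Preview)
The paper does not prove this lemma; it attributes it to \cite[Lemma 3.4]{Grie}. So there is no in-paper argument to compare against, and I assess your outline directly.

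Your step 2 (diagonal scheduling, plus stability of $\mathcal R$ under removing finite sets) is correct and is exactly the right engine. The gap is step 1, and it is genuine. Your proposed reduction via a Prokhorov-dense family $(\sigma_i)$ fails on two counts. First, neither $\sigma\mapsto\sigma(\{1\})$ nor $\sigma\mapsto\widehat\sigma(r)$ (uniformly in $r$) is Prokhorov-continuous, so a witness $r$ chosen for $\sigma_i$ says nothing about nearby $\sigma$; the ``uniform continuity property'' you invoke simply does not hold. Second, membership of $R$ in the class of sets of recurrence does not guarantee the existence of $r\in R$ with $\widehat\sigma(r)$ close to $\sigma(\{1\})$ --- that is a rigidity-type statement, not a recurrence statement. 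So your list of tasks is neither sufficient for $D\in\mathcal R$ nor, in general, individually satisfiable from the hypothesis $D_k\in\mathcal R$.

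The repair is a finitary compactness reduction, exactly parallel to what the paper carries out for its Bohr-density analogue (the lemma immediately following Lemma~\ref{diag}). By Furstenberg correspondence, $R$ is a set of recurrence if and only if for every $\varepsilon>0$ there exists $N$ such that every $A\subseteq\{1,\dots,N\}$ with $|A|\ge\varepsilon N$ satisfies $(A-A)\cap (R\setminus\{0\})\neq\emptyset$; and for each such $N$, only the finite piece $R\cap[-N,N]$ is relevant. Hence the correct countable list of tasks is indexed by $(m,\varepsilon)\in\Z\times(\Q\cap(0,1))$, and each task is witnessed not by a single integer but by a \emph{finite} subset $F\subseteq D$. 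At stage $k$, serving task $(m,\varepsilon)$, apply this characterization to $\bigl(D_k\setminus\{\text{already chosen elements}\}\bigr)-m$, which is still a set of recurrence, taking $N$ large enough to reach beyond all previous choices; this yields a finite $F_k\subseteq D_k$. Then $D:=\bigcup_k F_k$ is almost contained in every $D_s$ and belongs to $\mathcal R$. Compare the Claim inside the paper's Bohr-density lemma, where compactness of $\mathbf T^\N$ plays the same role and each density task is likewise witnessed by a finite $F_s$.
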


Applying this to the sets \[ D_s:=\bigl\{ n\in\N\,:\, \vert\widehat\mu(n)-1\vert<2^{-s}\bigr\},\]
where $\mu$ is a continuous measure supported on a Kronecker sets as in Proposition \ref{Shkarin}, one gets immediately the conclusion of Theorem \ref{Griesmer}: the required sequence $(n_k)_{k\geq 0}$ is the increasing enumeration of the diagonalizing set $D$. 

\medskip As it turns out, the analogue of Lemma \ref{diag} for Bohr density is also true: one can diagonalize in the family of Bohr dense sets. We have not found this result in the literature, so it may after all be new (even though this looks rather surprising). Since this adds no complication, we state it in the general framework of discrete abelian groups. 

\begin{lemma} Let $\mathbf Z$ be a discrete abelian group, and let $(D_s)_{s\ge 1}$ be a decreasing sequence of subsets of $\mathbf Z$. Assume that each set $D_s$ is dense in $b\mathbf Z$. Then, there exists a set $D$ which is almost contained in every $D_s$  and still dense in $b\mathbf Z$. 
\end{lemma}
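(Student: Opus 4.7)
I would construct $D$ as a union $D=\bigcup_{s\geq 1}E_s$ of finite sets $E_s\subseteq D_s$. Since $(D_s)$ is decreasing, $D\setminus D_s\subseteq E_1\cup\cdots\cup E_{s-1}$ is automatically finite for each $s$, so the ``almost-contained'' property comes for free and only the Bohr density of $D$ has to be arranged. To test the latter I would use the discrete-abelian form of Katznelson's criterion (Proposition~\ref{katz}): a subset $A\subseteq\mathbf Z$ is dense in $b\mathbf Z$ if and only if, for every finite set $F\subseteq\widehat{\mathbf Z}\setminus\{1\}$ of non-trivial characters and every $\varepsilon>0$, some finitely supported probability measure $\nu$ on $A$ satisfies $|\widehat{\nu}(\chi)|<\varepsilon$ for all $\chi\in F$. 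Conversely, the Bohr density of each $D_s$ provides, for any such test, a finite witness inside $D_s$, which one can take to be the support of a uniform probability measure on a finite set $E\subseteq D_s$.

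The choice of the $E_s$ is made inductively. For concreteness take $\mathbf Z=\Z$ (the general discrete abelian case will be handled similarly, see below). Fix a sequence $(z_j)_{j\geq 1}\subseteq\T\setminus\{1\}$, dense in $\T$ and to be enriched adaptively. At stage $s$, pick a finite $E_s\subseteq D_s$ whose uniform measure $\mu_{E_s}$ satisfies $|\widehat{\mu_{E_s}}(z_j)|<1/(2s)$ for $j=1,\dots,K_s$, with $K_s$ chosen large enough that $\{z_1,\dots,z_{K_s}\}$ is $1/(sN_s)$-dense in $\T$, where $N_s:=\max_{n\in E_s}|n|$. The elementary Lipschitz bound $|\widehat{\mu_{E_s}}(z)-\widehat{\mu_{E_s}}(z')|\leq N_s\,|z-z'|$ valid for $z,z'\in\T$ then yields Katznelson's criterion for $D=\bigcup_s E_s$: given any test $(F,\varepsilon)$ and $s\geq 1/\varepsilon$, each character $\chi\in F$ lies within $\varepsilon/(2N_s)$ of some $z_{j}$ with $j\leq K_s$, so $|\widehat{\mu_{E_s}}(\chi)|<\varepsilon$, and $\mu_{E_s}$ is the required witness supported on $D$.

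The main difficulty is the circular dependence between the parameters $K_s$, $N_s$ and the density rate of $\{z_1,\dots,z_{K_s}\}$: the Katznelson selection fixes $N_s$ only a posteriori, while the density condition requires $K_s$ large as a function of $N_s$. I would break the loop by running an adaptive inner procedure at each stage: start with some initial $K_s$, apply Katznelson to obtain $E_s$, read off $N_s$, enlarge $(z_j)$ if needed so that $\{z_1,\dots,z_{K_s'}\}$ is $1/(sN_s)$-dense, update $K_s$, and re-apply Katznelson if new characters have been introduced. Termination of this inner iteration is the delicate point and requires choosing the density decay rate of $(z_j)$ (combined, if necessary, with non-uniform witness measures that curb the effective Lipschitz constant) so that a fixed point is reached. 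For a general discrete abelian group $\mathbf Z$, the same scheme applies after replacing $\T$ by the compact metrizable quotients $\widehat H$ of $\widehat{\mathbf Z}$ obtained from countable subgroups $H\subseteq\widehat{\mathbf Z}$, and combining the constructions diagonally.
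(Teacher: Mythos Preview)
Your plan has a genuine gap, and it is exactly the one you flag yourself. The ``adaptive inner procedure'' at stage $s$ may never terminate: applying Katznelson's criterion to the enlarged list $\{z_1,\dots,z_{K_s'}\}$ produces a new finite witness $E_s'\subseteq D_s$ whose diameter $N_s'$ you have no control over, which in turn forces a finer net of test characters, and so on. Nothing in Katznelson's criterion bounds the support of the witness measure in terms of the test characters, so there is no reason for this loop to reach a fixed point. Hoping that a clever decay rate for $(z_j)$ or a non-uniform witness measure will ``curb the effective Lipschitz constant'' is not a proof; you would need an \emph{a priori} bound on $N_s$ before choosing $K_s$, and the Katznelson route simply does not provide one. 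The sketch for general $\mathbf Z$ (metrizable quotients of $\widehat{\mathbf Z}$, combined ``diagonally'') is too vague to assess, but it would inherit the same circularity.

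The paper avoids this difficulty altogether by reformulating Bohr density so that \emph{compactness} furnishes the finite witnesses directly, with no Lipschitz bookkeeping. One observes that $D$ is dense in $b\mathbf Z$ if and only if, for every $p\in\mathbf Z$ and every $\varepsilon>0$, the open sets $\{g\in\mathbf T^{\N}: d(\chi_n(g),\chi_p(g))<\varepsilon\}$, $n\in D$, cover the compact space $\mathbf T^{\N}$; hence a \emph{finite} subfamily $F\subseteq D$ already suffices. Applying this to each $D_s$ with $p$ ranging over a finite set exhausting $\mathbf Z$ and $\varepsilon=2^{-s}$ yields finite sets $F_s\subseteq D_s$ whose union $D=\bigcup_s F_s$ satisfies the same covering condition, hence is Bohr dense. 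The compactness step replaces your Lipschitz/density-of-test-characters argument and removes the circular dependence between $K_s$ and $N_s$ entirely.
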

\begin{proof} We denote by $\mathbf T$ the (compact) dual group of $\mathbf Z$. For any $n\in\mathbf Z$, we denote by $\chi_n\in b\mathbf Z$ the character on $\mathbf T$ defined by $n$. Also, we set $\T^\infty:=\T^\N$ (endowed with the product topology), and we choose a compatible metric $d$ on $\T^\infty$. Finally, for any $n\in\mathbf Z$ and $g=(g_1,g_2,\dots )\in\mathbf T^\N$, set $\chi_n(g):=(\chi_n(g_1),\chi_n(g_2),\dots )\in\T^\infty$.
\begin{claim} Let $D\subseteq\mathbf Z$. Then $D$ is dense in $b\mathbf Z$ if and only if the following holds true:
 \begin{equation}\label{finite} \forall p\in\Z\;\forall\varepsilon>0\;\;\exists F\subseteq D\;\;\hbox{finite}\quad\hbox{such that} \quad\forall g\in\mathbf T^\N\; \exists n\in F \; :\; d(\chi_n(g), \chi_p(g))<\varepsilon.
 \end{equation}
\end{claim}
\begin{proof} Since $\mathbf Z$ is dense in $b\mathbf Z$, the set $D$ is dense in $b\mathbf Z$ if and only if it is dense in $\mathbf Z$ for the Bohr topology. Now, if $p\in\mathbf Z$, a typical neighborhood of $p$ in $\mathbf Z$ for the Bohr topology has the form
\[ \mathcal U(p; g_1,\dots ,g_r, \eta):=\bigl\{ n\in \Z\,:\, \vert\chi_n(g_i)-\chi_p(g_i)\vert<\eta \quad\hbox{for $i=1,\dots ,r$}\bigr\},\]
where $g_1,\dots ,g_r\in\mathbf T$ and $\eta>0$. By definition of the product topology on $\T^\infty$, this means that $p$ has a neighborhood basis made of sets of the form
\[\mathcal V(p; g,\varepsilon):=\bigl\{ n\in\Z\,:\, d(\chi_n(g), \chi_p(g))<\varepsilon\bigr\},\]
where $g\in\mathbf T^\N$ and $\varepsilon>0$. Therefore, $D$ is dense in $\mathbf Z$ for the Bohr topology if and only if
\[ \forall p\in\Z\;\forall\varepsilon>0\; \forall g\in\mathbf T^\N\; \exists n\in D \; :\; d(\chi_n(g), \chi_p(g))<\varepsilon.\]
By compactness of $\mathbf T^\N$ and since the maps $g\mapsto \chi_n(g)$ are continuous from $\mathbf T^\N$ into $\T^\infty$, this is equivalent to (\ref{finite}).
\end{proof}
By the above claim, one can choose for each $s\in\N$ a finite set $F_s\subseteq D_s$ such that
\[ \forall p\in\llbracket -s,s\rrbracket \;\,\forall  g\in\mathbf T^\N \; \exists n\in F_s \; :\; d(\chi_n(g),  \chi_p(g))<2^{-s}.\]
Then, the set $D:=\bigcup_{s\in\N} F_s$ clearly satisfies (\ref{finite}), so it is dense in $b\mathbf Z$; and $D$ is almost contained in every $D_s$ because the sequence $(D_s)$ is decreasing.
\end{proof}

\section{Some open questions}\label{Section 5}
Some interesting open problems related to rigidity and \ka\ sequences concern the so-called \emph{Furstenberg sequence} $\nkp{0}$ obtained by ordering in a strictly increasing fashion the elements of the non-lacunary multiplicative semigroup $\{2^{i}3^{j}\,;\,i,j\ge 0\}$. A first question from \cite{BDLR} concerns rigidity.

\begin{question}\label{Question 11}
 Is the Furstenberg sequence a rigidity sequence? 
\end{question}
It is proved in \cite[Theorem\,2]{R} (see also \cite{Nathan}) that the Furstenberg sequence is nullpotent. In particular, the Furstenberg set is not an asymptotic basis of $\Z$.

\par\smallskip
The next question appeared in \cite{BaGrLyons}.

\begin{question}\label{Question 12}
 Is the Furstenberg sequence a \ka\ sequence?
\end{question}

In order to show that the Furstenberg set is not \ka, it would suffice to prove that there exists for every $\delta \in(0,1)$ a measure $\mu \in\mathcal{P}_{c}(\T)$ with $\inf_{i,j\ge 0}|\much(2^{i}3^{j})|\ge \delta $. The existence of such a measure is proved in \cite{BaGrLyons} for every $\delta \in(0,1/2)$. Of course, one cannot have positive answers to both Questions~\ref{Question 11} and ~\ref{Question 12}.

\par\smallskip
The following question from \cite{BDLR} is also open.

\begin{question}\label{Question 11b}
Is the sequence $n_k = 2^k + 3^k$ rigid ?
\end{question}

\par\smallskip

In a completely different direction, we propose the following question. Let $\mathcal{K}$ be the family of \ka\ subsets of $\Z$, seen as a subset of $\{0,1\}^{\Z}$ (endowed with its natural product topology), and let $\neg\mathcal{K}_{0}$ be the family of {generating} non-\ka\ subsets of $\Z$.

\begin{question}\label{Question 13}
 Are the classes $\mathcal{K}$ and $\neg\mathcal{K}_{0}$ Borel in $\{ 0,1\}^\Z$?
\end{question}

A \emph{negative} answer to this question would in particular imply something much stronger than the existence of \ka\ subsets of $\Z$ which are not asymptotic bases. Indeed, the set of all asymptotic bases of $\Z$ is easily seen to be Borel in $\{ 0,1\}^\Z$ (more precisely, $G_{\delta\sigma}$); so the non-Borelness of $\mathcal K$ would say that the properties of being a Kazhdan set and that of  being an asymptotic basis are in fact extremely different.

\smallskip 
Recall that the corresponding question for the set of rigid sequences has a positive answer by Corollary \ref{bizarre}. However, we have been unable to solve the following related question.

\begin{question} Consider the set of all sequences of integers $\nkp{0}$ for which there exists an irrational $z\in\T$ such that $z^{n_k}\to 1$. Is this set Borel in $\Z^{\N_0}$?
\end{question}


\begin{thebibliography}
 \rm\bibitem{AHL}
  \textsc{J.~Aaronson, M.~Hosseini, M.~Lema\'nczyk,}
 \newblock IP-rigidity and eigenvalue groups,
 \emph{Ergodic Theory Dynam. Systems}  \textbf{34} (2014), 1057--1076. 
\rm\bibitem{A}
 \textsc{T.~Adams,}  
\newblock Tower multiplexing and slow weak mixing, \emph{Colloq. Math.} \textbf{138} (2015), 47--72.
 
\rm\bibitem{BG1}
\textsc{C.~Badea, S.~Grivaux,}
\newblock Kazhdan sets in groups and equidistribution properties,  \emph{J. Funct. Anal.} \textbf{273} (2017), 1931--1969.

\rm\bibitem{BG2}
\textsc{C.~Badea, S.~Grivaux,}
\newblock Sets of integers determined by operator-theoretical properties: Jamison and Kazhdan sets in the group $\mathbb{Z}$ (Actes du $1^{\textrm{er}}$
 congr\`es national de la SMF - Tours 2016), \emph{S\'eminaires et Congr\`es} {\bf 31}  (2017),
Soci\'et\'e Math\'ematique de France.

\rm\bibitem{BaGrLyons}
\textsc{C.~Badea, S.~Grivaux,}
\newblock Kazhdan constants, continuous probability measures with large Fourier coefficients and rigidity sequences, \emph{Comm. Math. Helv.}, to appear.

\rm\bibitem{DD}
 \textsc{G. Barbieri, D. Dikranjan, C. Milan, H. Weber,}
\newblock Answer to Raczkowski's questions on convergent sequences of integers.
\emph{Topology Appl.} {\bf 132} (2003), 
89--101.

\rm\bibitem{BdHV}
 \textsc{B.~Bekka, P.~de~la~Harpe, A.~Valette,}
\newblock  Kazhdan's Property (T),
\newblock \emph{New Mathematical Monographs} {\bf 11}, Cambridge University Press (2008).


\rm\bibitem{BDLR}
\textsc{V.~Bergelson, A.~del Junco, M.~Lema\'nczyk, J.~Rosenblatt,}
\newblock Rigidity and non-recurrence along sequences,
\emph{Ergodic Theory Dynam. Systems} \textbf{34} (2014), 1464--1502. 
 
 
\rm\bibitem{Bos83}
\textsc{M.~Boshernitzan,}
\newblock Homogeneously distributed sequences and Poincar\'e sequences of integers of sublacunary growth,
\emph{Monatsh. Math.} \textbf{96} (1983),  173--181. 


\rm\bibitem{B}
 \textsc{J.~Bourgain,}
\newblock On the maximal ergodic theorem for certain subsets of the integers,
\emph{Israel J. Math.} \textbf{61} (1988), 39--72.

\rm\bibitem{Ch}
\textsc{I.~Chatterji, D.~Witte Morris, R.~Shah,}
\newblock Relative property (T) for nilpotent subgroups, 
\emph{Documenta Math.} \textbf{23} (2018), 353--382.

\rm\bibitem{ChoNad}
\textsc{J. R. Choksi, M. G. Nadkarni,}
\newblock Genericity of certain classes of unitary and self-adjoint operators,
\emph{Canad. Math. Bull.} \textbf{41} (1998), 137--139.

\rm\bibitem{dikran}
\textsc{R. Di Santo, D. Dikranjan, A.~Giordano Bruno,}
\newblock Characterized subgroups of the circle group,
\emph{Ricerche Mat.} \textbf{67} (2018), 
625--655.  

\rm\bibitem{EG}
\textsc{T.~Eisner, S.~Grivaux,}
\newblock Hilbertian Jamison sequences and rigid dynamical systems,
\emph{J. Funct. Anal.} \textbf{261} (2011), 2013--2052. 

\rm\bibitem{omelette}
\textsc{H. G. Eggleston,}
\newblock Sets of fractional dimensions which occur in some problems of number theory,
\emph{Proc. London Math. Soc.} {\bf 54} (1952), 4--93.

\rm\bibitem{ET}
\textsc{P.~Erd\"os, S.~Taylor,} 
\newblock On the set of points of convergence of a lacunary trigonometric series and the equidistribution properties of related sequences,
\emph{Proc. London Math. Soc.} \textbf{7} (1957), 598--615.

\rm\bibitem{ErdosRenyi}
\textsc{P.~Erd\"os, A.~R\'enyi,} 
\newblock Additive properties of random sequences of positive integers,
\newblock \emph{Acta Arith.} \textbf{6} (1960), 83--110. 

\rm\bibitem{China}
\textsc{A. Fan, D. Schneider,} 
\newblock Recurrence properties of sequences of integers, 
\emph{Sci. China Math.} {\bf 53} (2010), 641-- 656.

\rm\bibitem{FK}
\textsc{B.~Fayad, A.~Kanigowski,}
\newblock Rigidity times for a weakly mixing dynamical system which are not rigidity times for any irrational rotation, \emph{Ergodic Theory Dynam. Systems} \textbf{35} (2015), 2529--2534.

\rm\bibitem{FT}
\textsc{B.~Fayad, J.-P.~Thouvenot,}
\newblock On the convergence to $0$ of $m_n\xi$ mod $1$,
\emph{Acta Arith.} \textbf{165} (2014),  327--332. 

\rm\bibitem{FLWsz}
\textsc{N.~Frantzikinakis, E.~Lesigne, M.~Wierdl,}
\newblock Random differences in Szemer\'edi's theorem and related results,
\newblock \emph{J. Anal. Math.} \textbf{130} (2016), 91--133. 

\rm\bibitem{Fr2012}
\textsc{N.~Frantzikinakis,} 
\newblock Equidistribution of sparse sequences on nilmanifolds,
\newblock \emph{J. Anal. Math.} \textbf{109} (2009), 353--395. 

\rm\bibitem{FurstBook} 
\textsc{H. Furstenberg,}
\newblock Recurrence in ergodic theory and combinatorial number theory,
\emph{M. B. Porter Lectures}, Princeton University Press (1981).

\rm\bibitem{Furst}
\textsc{H.~Furstenberg,} 
\newblock Poincar\'e recurrence and number theory, 
\newblock \emph{Bull. Amer. Math. Soc. (N.S.)} \textbf{5} (1981), no. 3, 211--234. 

 \rm\bibitem{Fur}
\textsc{H.~Furstenberg,  B.~Weiss,} 
\newblock The finite multipliers of infinite ergodic transformations,
in ``The structure of attractors in dynamical systems'' (Proc. Conf., North Dakota State
Univ., Fargo, N.D., 1977), 127--132, 
\newblock \emph{Lecture Notes in Math.} \textbf{668}, Springer  (1978).

\rm\bibitem{GSW}
    \textsc{M. Goldstern, J. Schmeling, R. Winkler}, 
     \newblock Metric, fractal dimensional and Baire results on the distribution of subsequences, 
     \emph{Math. Nachr.} \textbf{219} (2000), 97--108.

\rm\bibitem{Graev}
    \textsc{M. Graev}, 
     \newblock Free topological groups (in Russian), 
     \emph{Izv. Akad. Nauk SSSR, Ser. Matem.} \textbf{12} (1948), 278--324.
     
     \rm\bibitem{GrahMcG}
\textsc{C.C. Graham, O.C. McGehee,}
\newblock Essays in commutative harmonic analysis, 
\emph{Grundlehren der Mathematischen Wissenschaften} {\bf 238}, Springer (1979).

\rm\bibitem{Grie}
 \textsc{J.~Griesmer,} 
\newblock Recurrence, rigidity, and popular differences, 
\newblock \emph{Ergodic Theory Dynam. Systems}, to appear.
 
  \rm\bibitem{Gip}
  \textsc{S.~Grivaux,} 
 \newblock IP-Dirichlet measures and IP-rigid dynamical systems: an approach via generalized Riesz products, \emph{Studia Math.} \textbf{215} (2013), 237--259.


\rm\bibitem{Katz} 
\textsc{Y.~Katznelson,}
\newblock Sequences of integers dense in the Bohr group, 
\emph{Proc. Roy. Inst. Techn.},  (June 1973), 73--86.

\rm\bibitem{K}
\textsc{R.~Kaufman,} 
\newblock Continuous measures and analytic sets, 
\newblock \emph{Colloq. Math.} \textbf{58} (1989),  17--21. 

\rm\bibitem{Ke}
\textsc{A. Kechris,}
\newblock Classical descriptive set theory,
\newblock \emph{Graduate Texts in Mathematics} {\bf 156}, Springer  (1995).

\rm\bibitem{KL}
\textsc{A. Kechris, A. Louveau,}
\newblock Descriptive set theory and the structure of sets of uniqueness,
\newblock \emph{London Math. Soc. Lecture Notes Series} {\bf 128}, Cambridge University Press (1987).
%

\bibitem{KuiNie}
\textsc{L.~Kuipers, H.~Niederreiter,}
 \newblock Uniform distribution of sequences,
 \newblock \emph{Pure and Applied Mathematics},
 Wiley-Interscience (1974).


\rm\bibitem{Lenz}
\textsc{D.~Lenz, P.~Stollmann,}
\newblock Generic sets in spaces of measures and generic singular continuous spectrum for Delone Hamiltonians,
\newblock \emph{Duke Math. J.} \textbf{131} (2006), 203--217. 
 
\rm\bibitem{LQR}
 \textsc{D.~Li, H.~Queff\'elec, L.~Rodriguez-Piazza,}
 \newblock Some new thin sets of integers in harmonic analysis, 
 \newblock \emph{J. Anal. Math.} \textbf{86} (2002),  105--138.
 
 
\rm\bibitem{L}
\textsc{R.~Lyons,} 
\newblock On measures simultaneously $2$- and $3$-invariant, \emph{Israel J. Math.} \textbf{61} (1988),  219--224.


\rm\bibitem{Meg}
 \textsc{R. E.~Megginson, }
An introduction to Banach space theory,
 \newblock \emph{Graduate Texts in Mathematics} {\bf 183}, {Springer} (1998).

\rm\bibitem{Nathan}
 \textsc{M. B.~Nathanson, }
Geometric group theory and arithmetic diameter,
 \newblock \emph{Publ. Math. Debrecen} \textbf{79} (2011), 
 563--572. 
 
 \rm\bibitem{Nad}
 \textsc{M. G. Nadkarni,}
 \newblock Spectral theory of dynamical systems,
 \newblock\emph{Texts and Readings in Mathematics} \textbf{15}, Hindustan Book Agency (2011).
 

\rm\bibitem{N}
 \textsc{S.~Neuwirth,}
 \newblock Two random constructions inside lacunary sets,
 \newblock \emph{Ann. Inst. Fourier} \textbf{49} (1999), 1853--1867.
 
 \rm\bibitem{Nien1}
  \textsc{J. Nienhuys,} 
   \newblock Construction of group topologies on Abelian groups, 
    \newblock \emph{Fund. Math.}  \textbf{75} (1972), 101--116.
    
  \rm\bibitem{Nien2}
  \textsc{J. Nienhuys,} 
   \newblock Some examples of monothetic groups, 
    \newblock \emph{Fund. Math.}  \textbf{88} (1975), no. 2, 163--171.

   

 
\rm\bibitem{Rudin}
\textsc{W.~Rudin,} 
\newblock Fourier analysis on groups (reprint of the 1962 original),
\newblock  \emph{Wiley Classics Library},
John Wiley \& Sons (1990).



\rm\bibitem{R}
\textsc{I.~Ruzsa,} 
\newblock Arithmetical topology,
\newblock Number theory, Vol. I (Budapest, 1987), 473--504, \emph{Colloq. Math. Soc. J\'anos Bolyai} \textbf{51} (1990), North-Holland.

\rm\bibitem{Saeki}
\textsc{S. Saeki,}
\newblock Bohr compactification and continuous measures,
\newblock \emph{Proc. Amer. Math. Soc.} {\bf 80} (1980), 244--246.

\rm\bibitem{Shk}
\textsc{S. Shkarin,}
\newblock Universal elements for non-linear operators and their applications,
\newblock \emph{J. Math. Anal. Appl.} {\bf 348} (2008), 193--210.


\rm\bibitem{Wa}
\textsc{P.~Walters,} 
\newblock An Introduction to Ergodic Theory, 
\newblock \emph{Graduate Texts in Mathematics} \textbf{79},
Springer  (1982).

\rm\bibitem{ZP}
\textsc{E.~Zelenyuk, V.~Protasov,} 
\newblock Topologies on abelian groups (Russian),
\newblock \emph{Izv. Akad. Nauk SSSR} Ser. Mat. \textbf{54} (1990), 1090--1107; translation in \emph{Math. USSR-Izv.} \textbf{37} (1991), 445--460.

\end{thebibliography}
\end{document}